\definecolor{darkblue}{rgb}{0.0,0,0.7} 
\newcommand{\darkblue}{\color{darkblue}} 
\definecolor{darkred}{rgb}{0.7,0,0} 
\newtheorem{theorem}{Theorem}[section] 
\newtheorem{proposition}[theorem]{Proposition} 
\newtheorem{corollary}[theorem]{Corollary} 
\newtheorem{lemma}[theorem]{Lemma} 
\newtheorem{question}[theorem]{Question}
\theoremstyle{definition}
\newtheorem{definition}[theorem]{Definition}
\newtheorem{remark}[theorem]{Remark}
\newcommand{\po}{\ar@{}[dr]|{\text{\pigpenfont R}}}
\newcommand{\pb}{\ar@{}[dr]|{\text{\pigpenfont J}}}
\newcommand{\Tc}{\mathcal{T}}
\newcommand{\C}{\mathcal{C}}
\newcommand{\Sc}{\mathcal{S}}
\newcommand{\Lc}{\mathcal{L}}
\newcommand{\Rc}{\mathcal{R}}
\newcommand{\Cov}{\operatorname{Cov}_{\downarrow}}
\newcommand{\Inc}{\operatorname{Inc}}
\newcommand{\Dec}{\operatorname{Dec}}
\newcommand{\Trs}{\operatorname{Trs}}
\newcommand{\Wfs}{\operatorname{Wfs}}
\newcommand{\Lss}{\operatorname{Lss}}
\newcommand{\ideal}{\operatorname{Ideal}}
\newcommand{\Rel}{\operatorname{Rel}}
\newcommand{\Con}{\operatorname{Con}}
\newcommand{\W}{\mathcal{W}}
\newcommand{\con}{\operatorname{con}}
\newcommand{\maxcov}{\operatorname{mcov}_{\downarrow}}
\newcommand{\Tr}{\operatorname{Tr}}
\newcommand{\Ls}{\operatorname{Ls}}
\newcommand{\jirr} {\mathrm{j\text{-}Irr}}
\newcommand{\pairs}{\operatorname{Pairs}}
\crefname{figure}{figure}{figures}
\Crefname{figure}{Figure}{Figures}
\newcommand{\emp}[1]{\emph{\darkblue #1}} 
\newcommand{\darkred}{\color{darkred}} 
\newcommand{\defn}[1]{\emph{\darkred #1}}
\title{On the lattice of the weak factorization systems on a finite lattice}
\author[1, 2]{Yongle Luo}
\author[2]{Baptiste Rognerud}
\affil[1]{School of Mathematical Sciences, Nanjing Normal University, 210023, Nanjing, China}
\affil[2]{Institut de Mathématiques de Jussieu, Paris Rive Gauche (IMJ-PRG), Campus des Grands Moulins,
Université de Paris - Boite Courrier 7012, 8 Place Aurélie Nemours,
75205 PARIS Cedex 13, France}
\affil[ ]{\textit {yongle.luo@imj-prg.fr,baptiste.rognerud@imj-prg.fr}}
\date{}
\newcolumntype{P}[1]{>{\centering\arraybackslash}p{#1}}
\begin{document}

\maketitle


\begin{abstract}
We consider the lattice of all the weak factorization systems on a given finite lattice. We prove that it is semidistributive, trim and congruence uniform. We deduce a graph theoretical approach to the problem of enumerating transfer systems. As an application we find a lower bound for the number of transfer systems on a boolean lattice. 
\end{abstract}

\tableofcontents

\thispagestyle{empty}

\section{Introduction}

A \defn{weak factorization system} on a category is a pair $(\Lc,\Rc)$ of subclasses of the morphisms of the category satisfying a list of natural conditions (see \cref{def:wfs}). One can order the weak factorization systems on a given category by setting $(\Lc,\Rc) \preceq (\Lc',\Rc')$ if $\Rc \subseteq \Rc'$ and $\Lc' \subseteq \Lc$. An interval in the poset of weak factorization system was called a \defn{premodel structure} by Barton in \cite{barton}. A \defn{model structure}, in the sense of Quillen, is a premodel structure which satisfies an additional property. In other words, the model structures are particular intervals in this poset. If one is interested in classifying model structures, it is tempting to start by classifying the weak factorization systems. Such a classification seems to be difficult to obtain in general but a lot of results have recently been obtained (see e.g. \cite{blumberg2024homotopical} for a very nice introductory article on the subject) in the particular case where the category is a \emp{finite lattice}.

In that case, the two classes $\Lc$ and $\Rc$ of a weak factorization systems satisfy a very short list of properties: they are subposets of $(L,\leq)$ which are respectively closed under pushouts (joins in lattice terminology) and pullbacks (meets). The class $\Rc$ is called a \defn{transfer system} and we call the class $\Lc$ a \defn{left saturated set}. We can order transfer systems by inclusion of relations and similarly, we order the left saturated sets by reverse inclusion. It is classical that the classes $\Lc$ and $\Rc$ determine each other. So, the projections onto each component induce injective morphisms of posets from the poset of weak factorization systems to the poset of transfer systems and of left saturated sets. By \cite[Theorem 4.13]{franchere2021self}, the three posets are in fact isomorphic. Moreover by \cite[Proposition 3.7]{franchere2021self} they are lattices where the meet is given by \emp{intersection of the transfer systems} and the join is given by the \emp{intersection of left saturated sets}. Surprisingly \emp{transfer systems} have a different origin. There is a notion of $G$-transfer systems (see \cref{def:gts}) where $G$ is a finite group, which can be used to classify the so-called $N_{\infty}$ operads up to homotopy (see \cite[Theorem 3.6]{franchere2021self} and the references above it). In the particular case where the group $G$ is abelian (or every subgroup is normal) a $G$-transfer system is nothing but a transfer system on its lattice of subgroups.   This gives another motivation for the classification of the weak factorization systems on a given lattice. Many results in that direction have been recently obtained: for the cyclic $p$-groups see \cite{roitzheim2022n}, for the symmetric group $S_3$ or the quaternion group $Q_8$ see \cite{rubin}, the elementary abelian $p$-groups of rank two see \cite{bao2023transfer}, the Dihedral group $D_{p^n}$ see \cite{balchin2022combinatorics}, see also \cite{blumberg2024homotopical} for more references.

The lattice of subgroups of a cyclic $p$-group is a total order and it has been proved by  Roitzheim, Barnes and Balchin that the lattice of transfer systems is in this case isomorphic to the \emp{Tamari lattice} (see \cite[Theorem 25]{roitzheim2022n}, see also \cref{sec:total_order} for an alternative proof). The Tamari lattices are particular orderings of the Catalan numbers and they appear throughout mathematics. In representation theory, they are known to be the lattices of \emp{torsion pairs} of the path algebras of equioriented quivers of type $A$ (see e.g. \cite{thomas_tamari}). 

The notion of torsion pairs in an \emp{abelian category} has been introduced by Dickson in \cite{dickson1966torsion} and has received a lot of attention recently due to its relation with the \emp{$\tau$-tilting theory} of Adachi, Iyama and Reiten \cite{tau_tilting}. The lattice properties of the lattice of torsion pairs in the category of finitely generated modules over an artinian algebra have been extensively studied in \cite{DIRRT}. The fact that the Tamari lattice appears at the intersection of these two theories is the starting point for this article and our results can be summarized by saying that from the lattice point of view, the weak factorization systems of finite lattices and the torsion pairs of artinian algebras are extremely similar. The main difference lies in the fact there are generally \emp{infinitely many torsion pairs}. A more detailed comparison of the properties is given in \cref{sec:dico}. 

Let us be more precise on the different results of this article, we refer to \cref{fig:commutative_square} and \cref{fig:square} for an illustration in the case where the lattice $L$ is a commutative square. A lattice $(L,\land,\lor)$ is said to be \defn{distributive} if the meet distributes over the join and vice-versa. As main example, given a poset $(P,\leq)$ we consider $\ideal(P)$ the set of \defn{order ideals} of $P$ (downwards closed subsets). The inclusion of ideals turns $\ideal(P)$ into a poset and since both the intersection and union of ideals are ideals, it is a lattice, and it is distributive by the `distributive laws' in set theory. Moreover, by the Birkhoff \emp{representation theorem}, any finite distributive lattice $L$ is isomorphic to $\ideal(P)$ where $P$ is the subposet of $L$ containing the \emp{join-irreducible elements} of $L$. For our setting, this notion is too strict and rigid, indeed as first result we have a characterization of the distributive lattices of weak factorization systems. 

\begin{proposition}
Let $L$ be a finite lattice. Then the lattice of weak factorization systems on $L$ is \emp{distributive} if and only if $|L|\leq 2$.
\end{proposition}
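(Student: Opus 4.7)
The plan is to prove the easy implication $|L|\le 2 \Rightarrow$ distributive by direct inspection (at most two transfer systems, and any lattice of cardinality $\le 2$ is a chain), and to prove the harder converse by showing that whenever $|L|\ge 3$ the lattice of weak factorization systems on $L$ already fails the weaker property of \emph{modularity}, which in turn prevents distributivity.

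For the nontrivial direction, fix some $m\in L$ with $\hat{0} < m < \hat{1}$; this is possible since $|L|\ge 3$. I will work with the three transfer systems obtained as the pullback-transitivity closures of a single generating pair:
\[
S_{1}=\langle (m,\hat{1})\rangle,\qquad
S_{2}=\langle (\hat{0},m)\rangle,\qquad
S_{3}=\langle (\hat{0},\hat{1})\rangle.
\]
A short computation using only the pullback axiom gives the explicit descriptions
\[
S_{1}=\{(m\wedge z,z):z\in L\},\qquad
S_{2}=\{(\hat{0},z):z\le m\},\qquad
S_{3}=\{(\hat{0},z):z\in L\},
\]
and in particular $S_{2}\subseteq S_{3}$, because the pullback of $(\hat{0},\hat{1})$ along $m\le \hat{1}$ produces $(\hat{0},m)$. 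In this setting the modular identity $(S_{1}\vee S_{2})\wedge S_{3} = S_{2}\vee(S_{1}\wedge S_{3})$ is meaningful (we have $S_{2}\le S_{3}$), and the strategy is to evaluate both sides and exhibit a discrepancy.

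Applying transitivity to the pair $(\hat{0},m\wedge w)\in S_{2}$ and $(m\wedge w,w)\in S_{1}$ produces $(\hat{0},w)$ for every $w\in L$, which shows $S_{3}\subseteq S_{1}\vee S_{2}$ and therefore $(S_{1}\vee S_{2})\wedge S_{3}=S_{3}$. On the other hand $S_{1}\wedge S_{3}=S_{1}\cap S_{3}$ is exactly the set of pairs $(\hat{0},z)$ with $m\wedge z=\hat{0}$, and a direct check shows that
\[
S_{2}\vee(S_{1}\wedge S_{3})=\{(\hat{0},z):z\le m\text{ or }m\wedge z=\hat{0}\}.
\]
Since $\hat{1}$ satisfies neither $\hat{1}\le m$ nor $m\wedge\hat{1}=\hat{0}$, the pair $(\hat{0},\hat{1})$ belongs to $S_{3}$ but not to $S_{2}\vee(S_{1}\wedge S_{3})$, and the modular identity fails.

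The main technical point is the verification of the four explicit descriptions of $S_{1}$, $S_{2}$, $S_{3}$ and of $S_{2}\vee(S_{1}\wedge S_{3})$: each requires checking that the proposed set is already closed under both transitivity and pullback, so that it coincides with the smallest transfer system containing its generators. The pullback axiom is where new relations would normally be forced, and once one confirms that no further pairs appear, the failure of modularity through $(\hat{0},\hat{1})$ is immediate, and distributivity is ruled out because every distributive lattice is modular.
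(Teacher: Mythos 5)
Your proof is correct, and it reaches the conclusion by the same underlying mechanism as the paper — exhibiting a failure of modularity among transfer systems generated by single relations — but with a genuinely different choice of witnesses. The paper picks a chain of cover relations $0\lessdot 1\lessdot 2$ starting at the bottom of $L$ and builds an explicit pentagon sublattice on $\Tr(0,2)$, $\Tr(0,2)_*$ and $\Tr(1,2)$; you instead take an arbitrary $\hat{0}<m<\hat{1}$ and test the modular identity directly on $\Tr(m,\hat{1})$, $\Tr(\hat{0},m)$, $\Tr(\hat{0},\hat{1})$, locating the discrepancy at the single relation $(\hat{0},\hat{1})$. Your configuration is slightly more economical: it needs no cover relations in $L$ and no appeal to the unique lower cover $\Tr(a,b)_*$ of a join-irreducible, and all four sets involved are computed immediately from the closed formula $\Tr(a,b)=\{(a\land c,c)\ |\ c\leq b\}\cup\Delta(L)$; the paper's pentagon has the mild advantage of displaying the standard $N_5$ certificate explicitly. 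Two cosmetic points: your displayed descriptions of $S_1,S_2,S_3$ should carry $\cup\,\Delta(L)$ (transfer systems are reflexive by definition, so e.g.\ $(x,x)$ for $x\not\leq m$ lies in $S_1$ but is not of the form $(m\land z,z)$), and the "direct check" that $\{(\hat{0},z)\ |\ z\leq m \hbox{ or } m\land z=\hat{0}\}\cup\Delta(L)$ is pullback- and composition-closed deserves the one line it takes (all non-trivial relations have source $\hat{0}$, so no new composites arise, and both defining conditions are inherited by $w\leq z$). Neither affects the validity of the argument.
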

One should note that for more general categories $\C$ there are other examples of distributive lattices of weak factorization systems. This is for example the case of the \emp{category of sets} (see  the poset in \cite{Antolin}). 

\emp{Semidistributive lattices} are a generalization of the distributive lattices introduced by J\'{o}nsson \cite{jonsson1961} who was inspired by Whitman’s solution to the word problem for free lattices involving  the existence of a nice canonical form. We refer to \cite[Chapter 3]{gratzerlattice} for more details. We say that a lattice $(L,\land,\lor)$ is \defn{semidistributive} if $\forall a,b,c\in L$
 \[ 
 a \vee (b \wedge c) = ( a\vee b ) \wedge (a\vee c)  \hbox{ whenever } a\vee b = a\vee c,
 \]
 and 
 \[
 a\wedge (b \vee c) = (a\wedge b) \vee (a \wedge c) \hbox{ whenever } a\wedge b = a\wedge c.
 \]

For weak factorization systems we have: 
\begin{theorem}\label{thm:semidistributive}
Let $(L,\leq)$ be a finite lattice. Then the lattice of weak factorization systems on $L$ is a \emp{semidistributive} lattice.
\end{theorem}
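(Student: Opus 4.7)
The plan is to exploit the duality between transfer systems and left saturated sets to reduce both semidistributive laws to a single assertion, then to establish that assertion by attaching a canonical label to each cover relation of $\Trs(L)$. Under the isomorphism $\Wfs(L)\cong\Trs(L)$, meet is intersection of relations, so meet-semidistributivity becomes the statement: $\Rc\cap\Rc_1=\Rc\cap\Rc_2$ implies $\Rc\cap(\Rc_1\vee\Rc_2)=\Rc\cap\Rc_1$, where $\Rc_1\vee\Rc_2$ denotes the smallest transfer system containing $\Rc_1\cup\Rc_2$. Using instead $\Wfs(L)\cong\Lss(L)^{\mathrm{op}}$, join-semidistributivity of $\Wfs(L)$ reduces to the formally identical statement for $\Lss(L)$. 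Since a left saturated set on $L$ is precisely a transfer system on the opposite lattice $L^{\mathrm{op}}$ (pushouts in $L$ being pullbacks in $L^{\mathrm{op}}$), it suffices to prove meet-semidistributivity of $\Trs(L)$ for every finite lattice $L$.

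My first step would be to give an explicit description of the join in $\Trs(L)$: the join $\Rc_1\vee\Rc_2$ is the saturation of $\Rc_1\cup\Rc_2\cup\{(x,x):x\in L\}$ under iterated transitive composition and pullback along arrows of $L$. Every pair $(a,b)\in\Rc_1\vee\Rc_2$ thus admits a finite \emph{derivation} from pairs in $\Rc_1\cup\Rc_2$, which opens the way to an inductive argument on derivation length.

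For the main step, I would verify meet-semidistributivity via a standard local criterion for finite lattices: attach to each cover $\Rc\lessdot\Rc'$ a canonical \emph{minimal new edge} $(a,b)\in\Rc'\setminus\Rc$ such that $\Rc'$ coincides with the transfer system generated by $\Rc\cup\{(a,b)\}$, and show that this label is uniquely determined by the cover. Concretely, given $(a,b)\in\Rc\cap(\Rc_1\vee\Rc_2)$ with a minimal-length derivation from $\Rc_1\cup\Rc_2$, I would trace the derivation backwards using the uniqueness of the label to conclude that $(a,b)\in\Rc_1\cap\Rc_2$, and hence in $\Rc\cap\Rc_1$. The labels also yield a canonical meet representation of each transfer system in terms of meet-irreducibles, which is one of the equivalent reformulations of meet-semidistributivity.

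The principal obstacle is the interaction between the transitive and pullback closures: when a single relation $(a,b)$ is added to a transfer system $\Rc$, it triggers a cascade of forced relations through the combined closure, and controlling this cascade combinatorially is the heart of the proof. I would seek a minimality criterion for the label $(a,b)$ in terms of the interval $[a,b]\subseteq L$ together with the restriction of $\Rc$ to that interval; I expect the analysis to run in parallel, though not to coincide literally, with the brick labeling of the lattice of torsion classes in \cite{DIRRT}, with pullback closure playing the role of the short exact sequence closure used in the representation-theoretic setting. Once the canonical labeling is in place, meet-semidistributivity of $\Trs(L)$ follows, and join-semidistributivity of $\Wfs(L)$ follows by applying the same argument to $\Trs(L^{\mathrm{op}})\cong\Lss(L)$.
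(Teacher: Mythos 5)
Your reduction via the chain of (anti-)isomorphisms $\Trs(L)\cong\Lss(L)\cong\Trs(L^{op})^{op}$ to a single semidistributive law is exactly the paper's first step (the paper reduces to join-semidistributivity, you to meet-semidistributivity; both are legitimate). The main step, however, has a genuine gap, and it stems from a missed simplification. You describe $\Rc_1\vee\Rc_2$ as the closure of $\Rc_1\cup\Rc_2$ under \emph{both} transitive composition and pullback, and you identify controlling the resulting ``cascade'' as the heart of the proof. But for two transfer systems no cascade occurs: since each $\Rc_i$ is already closed under pullbacks, one checks at once that $(\Rc_1\cup\Rc_2)^{pb}=\Rc_1\cup\Rc_2$, so $\Rc_1\vee\Rc_2=(\Rc_1\cup\Rc_2)^{tc}$ is the bare transitive closure (this is the third item of \cref{lem:smallest_ts}). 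Once this is in hand, join-semidistributivity is essentially immediate and needs none of the cover-labelling machinery: if $P=\Rc_1\vee\Rc_2=\Rc_1\vee\Rc_3$, then every cover relation of the poset $P$ must already lie in $\Rc_1\cup\Rc_2$ and in $\Rc_1\cup\Rc_3$, hence in $\Rc_1\cup(\Rc_2\cap\Rc_3)$, and since every relation of $P$ is a composite of covers of $P$ we get $P\subseteq\big(\Rc_1\cup(\Rc_2\cap\Rc_3)\big)^{tc}=\Rc_1\vee(\Rc_2\wedge\Rc_3)$. This is the paper's entire argument.

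By contrast, the half you chose to attack directly --- meet-semidistributivity via induction on the length of a derivation of $(x,y)\in\Rc\cap(\Rc_1\vee\Rc_2)$ --- does not close as sketched. Writing $(x,y)$ as a chain $x=c_0\to c_1\to\cdots\to c_n=y$ with each step in $\Rc_1\cup\Rc_2$, closure of $\Rc$ under pullbacks yields $(x,c_i)\in\Rc$ for every $i$, but it does \emph{not} yield $(c_i,c_{i+1})\in\Rc$ or $(c_1,y)\in\Rc$, so the hypothesis $\Rc\cap\Rc_1=\Rc\cap\Rc_2$ can only be applied to the first step of the chain and the induction stalls. The appeal to ``uniqueness of the label of a cover'' to trace the derivation backwards is not yet an argument: establishing that each cover $\Rc'\lessdot\Rc''$ in $\Trs(L)$ is generated over $\Rc'$ by a unique new relation is itself a substantial lemma (the paper proves it, partly by means of the join description above), and you would still need to supply the lattice-theoretic criterion converting unique cover labels into semidistributivity. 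The fix is to prove join-semidistributivity directly as above and let your (correct) duality argument deliver the meet half.
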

In the semidistributive lattices we have the existence of a so-called \defn{canonical join decomposition} for their elements. We refer to \cref{sec:canonical_join} for explicit definitions. Loosely speaking any element of a semidistributive lattice can be written in a unique way as the join of \emp{join-irreducible elements}. This is a lattice version of the fundamental theorem of arithmetic which says that any integer can be written in a unique way as product of prime numbers. 

The join-irreducibles in the poset of transfer systems are particularly easy to describe. We denote by $\Rel(L)$ the set of all \emp{relations of $(L,\leq)$}. That is the set of $(a,b)$ such that $a \leq b$. We denote by $\Rel^*(L)$ the set of the non-trivial relations on $L$ i.e., the relations $(a,b)$ with $a\neq b$. Then, we prove that sending a relation $(a,b)$ to the smallest transfer system $\Tr(a,b)$ containing $(a,b)$ is a bijection between the non-trivial relations in $L$ and the join-irreducible elements in the lattice of transfer systems on $L$. For two relations $(a,b)$ and $(c,d)$ we write $(a,b) \sqsubset (c,d)$ if there is a pullback diagram
\[
\xymatrix{
a \pb \ar[r] \ar[d] & b \ar[d]\\
c\ar[r] & d.
}
\]
\begin{proposition}\label{pro:join-irreducible}
Let $(L,\leq)$ be a finite lattice. The poset of join-irreducible elements in the lattice of weak factorization systems on $L$ is isomorphic to $(\Rel^*(L) ,\sqsubset)$. 
\end{proposition}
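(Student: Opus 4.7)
The plan is to establish an explicit formula for $\Tr(c,d)$ when $(c,d)$ is a non-trivial relation, and then to leverage it to pin down both the cover structure and the order on the join-irreducibles. I claim that
\[
\Tr(c,d) = \{(a,a) : a \in L\} \cup \{(b \wedge c, b) : b \leq d\}.
\]
The right-hand side is a transfer system: it contains all identities; the pullback of $(b \wedge c, b)$ along any $e \leq b$ is $(e \wedge c, e)$, which lies in the set since $e \leq b \leq d$; and transitivity reduces to the identity $(b' \wedge c, b') = (b' \wedge c, b') \circ (b \wedge c, b)$ whenever $b = b' \wedge c$. Conversely, closure under pullback forces every transfer system containing $(c,d)$ to contain $(b\wedge c, b)$ for each $b \leq d$, showing minimality.

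The next step is to show that $\Tr(c,d)$ is join-irreducible by exhibiting a unique lower cover, namely $\Tr(c,d) \setminus \{(c,d)\}$. This subset is itself a transfer system: among the remaining generators, the only relations whose larger endpoint equals $d$ would have to arise from a pullback of something ending at $d$ or a composition $(x,y)\circ(y,d)$, and in both cases the only candidate with larger endpoint $d$ in $\Tr(c,d)$ is $(c,d)$ itself, which has been removed. On the other hand, any transfer system $T' \subsetneq \Tr(c,d)$ must avoid $(c,d)$, for otherwise minimality of $\Tr(c,d)$ would give $\Tr(c,d) \subseteq T'$, contradicting strict inclusion. Hence every such $T'$ lies in $\Tr(c,d)\setminus\{(c,d)\}$, which is the unique lower cover.

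For surjectivity of $(c,d) \mapsto \Tr(c,d)$ onto the join-irreducibles, observe that any transfer system satisfies
\[
T = \bigvee_{(c,d)\in T,\, c \neq d} \Tr(c,d),
\]
since each summand lies in $T$ and every non-identity element of $T$ is obviously in its own $\Tr(c,d)$. Join-irreducibility of $T$ then collapses this expression to $T = \Tr(c,d)$ for a single non-trivial $(c,d)$. Injectivity is immediate: $(c,d)$ is recovered as the unique element of $\Tr(c,d)$ absent from its (unique) lower cover, so $\Tr(c,d) = \Tr(c',d')$ implies $(c,d) = (c',d')$.

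Finally, the order isomorphism with $(\Rel^*(L),\sqsubset)$ follows from the explicit formula. Because $\Tr(a,b)$ is minimal containing $(a,b)$, we have $\Tr(a,b) \subseteq \Tr(c,d)$ if and only if $(a,b) \in \Tr(c,d)$; for $a \neq b$, this means exactly that $b \leq d$ and $a = b \wedge c$, which is precisely the pullback condition $(a,b) \sqsubset (c,d)$. The main obstacle in the whole argument is the initial closure verification for $\Tr(c,d)$ and for the candidate lower cover $\Tr(c,d)\setminus\{(c,d)\}$; once these elementary checks are in place, the rest of the argument is essentially formal bookkeeping of join decompositions and cover relations.
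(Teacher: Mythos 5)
Your proof is correct, and its backbone is the same as the paper's: the explicit description $\Tr(c,d)=\Delta(L)\cup\{(b\wedge c,b)\ |\ b\leq d\}$ (item 2 of \cref{lem:smallest_ts}), the identification of $\Tr(c,d)\setminus\{(c,d)\}$ as the unique lower cover, and the observation that $\Tr(a,b)\subseteq\Tr(c,d)$ is equivalent to $(a,b)\in\Tr(c,d)$, i.e.\ to the pullback condition defining $\sqsubset$. The one place where you genuinely diverge is the surjectivity of $(c,d)\mapsto\Tr(c,d)$: the paper routes this through its cover-relation machinery (\cref{lem:lower_ts} and \cref{lem:cov_relations}), writing a join-irreducible $\Rc$ as $\Rc_*\vee\Tr(a,b)$ for the unique $(a,b)\in\Rc^c\setminus\Rc_*$, whereas you use the elementary decomposition $T=\bigvee_{(c,d)\in T,\,c\neq d}\Tr(c,d)$ and let join-irreducibility collapse it to a single term. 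Your route is shorter and self-contained; the paper's pays off later, since those cover-relation lemmas are reused for the join-labelling and congruence-uniformity results. Your verification that $\Tr(c,d)\setminus\{(c,d)\}$ is closed under pullback and composition is stated somewhat tersely, but the underlying observation --- that $(c,d)$ is the only non-trivial element of $\Tr(c,d)$ with second coordinate $d$, so neither pullbacks (which only decrease the second coordinate) nor compositions (which would need an intermediate relation ending at $d$) can recreate it --- is sound.
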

We refer to \cref{fig:square} for an illustration in the simple case where $L$ is a commutative square. 

As a corollary, we obtain the following result, which answers a question in \cite[remark 4.5]{bao2023transfer}:
\begin{proposition}\label{pro:isomorphism}
    Let $L$ and $L'$ be two finite lattices with isomorphic lattices of weak factorization systems. Then $L$ and $L'$ are isomorphic. 
\end{proposition}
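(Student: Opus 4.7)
The plan is to apply \cref{pro:join-irreducible} to translate the lattice-theoretic hypothesis into an isomorphism of posets of join-irreducibles, and then to reconstruct the lattice $L$ directly from the poset $(\Rel^*(L), \sqsubset)$.

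The set of join-irreducible elements of a finite lattice, equipped with its induced order, is a lattice-theoretic invariant. So by \cref{pro:join-irreducible}, any isomorphism between the lattices of weak factorization systems on $L$ and $L'$ would yield an isomorphism of posets $\varphi \colon (\Rel^*(L), \sqsubset) \cong (\Rel^*(L'), \sqsubset)$. It therefore suffices to reconstruct the finite lattice $L$ from the poset $(\Rel^*(L), \sqsubset)$ in a manner intrinsic enough to be transported along $\varphi$. The degenerate case $|L|=1$ is immediate, since $\Rel^*(L)$ is empty precisely in this situation.

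Assume $|L|\geq 2$. The crucial step is to single out the element $(\hat{0}_L, \hat{1}_L) \in \Rel^*(L)$ using only the poset structure. Unwinding the definition of $\sqsubset$, the principal order ideal $\mathord{\downarrow}(c,d)$ inside $(\Rel^*(L), \sqsubset)$ is in bijection with $\{b \in L : b \leq d,\; b \not\leq c\}$ via $b \mapsto (b \wedge c, b)$, and in particular has cardinality $|\mathord{\downarrow}d| - |\mathord{\downarrow}c|$. This quantity is at most $|L|-1$, with equality if and only if $d=\hat{1}_L$ and $c=\hat{0}_L$. Hence $(\hat{0}_L,\hat{1}_L)$ is characterized intrinsically as the unique element of $(\Rel^*(L), \sqsubset)$ whose principal order ideal has maximum cardinality, and this characterization is preserved by $\varphi$.

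Once $(\hat{0}_L,\hat{1}_L)$ has been identified, I would observe that $\mathord{\downarrow}(\hat{0}_L,\hat{1}_L) = \{(\hat{0}_L, b) : b > \hat{0}_L\}$, and that $(\hat{0}_L, b) \mapsto b$ is an order-isomorphism from this principal order ideal onto $L \setminus \{\hat{0}_L\}$ equipped with the order inherited from $L$, since $(\hat{0}_L,b) \sqsubset (\hat{0}_L, b')$ reduces to $b \leq b'$. Adjoining a new minimum element then recovers the poset $(L,\leq)$, and a finite lattice is completely determined by its underlying poset. Performing the symmetric reconstruction on $L'$ and composing with $\varphi$ produces the desired isomorphism $L \cong L'$ of lattices. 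The only nontrivial step is the intrinsic characterization of $(\hat{0}_L, \hat{1}_L)$ inside $(\Rel^*(L), \sqsubset)$, which the cardinality argument above handles; the remaining steps are routine bookkeeping, in the spirit of Birkhoff's recovery of a distributive lattice from its poset of join-irreducibles.
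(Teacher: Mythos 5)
Your proposal is correct and follows essentially the same route as the paper: pass to the posets of join-irreducibles via \cref{pro:join-irreducible}, characterize $(0,1)$ intrinsically as the unique relation whose principal order ideal in $(\Rel^*(L),\sqsubset)$ has maximum cardinality $|L|-1$, and recover $L$ from that ideal, which is isomorphic to $L\setminus\{0\}$. Your direct computation of the ideal's cardinality as $|{\downarrow}d|-|{\downarrow}c|$ is a slightly streamlined version of the paper's argument (which first reduces to maximal elements), but the underlying idea is identical.
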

More precisely, we show that $L\setminus\{0\}$ (where $0$ is the least element) is isomorphic to the largest principal ideal of $(\Rel^*(L),\sqsubset)$.

For two relations $(a,b)$ and $(c,d)$ in $\Rel^*(L)$, we write $(a,b) \boxslash (c,d)$ if $(a,b)$ lifts on the left $(c,d)$ (see \cref{sec:wfs} for more details). Moreover, for every $S\subseteq \Rel^*(L)$ we say that $S$ is an \defn{elevating set } if for every $(a,b) \in S$ and $(c,d)\in S$ we have $(a,b) \boxslash (c,d)$ and $(c,d) \boxslash (a,b)$. The existence of canonical join representations in the semidistributive lattice of weak factorization systems, implies the following result.
\begin{theorem}\label{thm:semibricks}
Let $(L,\leq)$ be a finite lattice. There is a bijection between the set of weak factorization systems on $L$ and the set of elevating subsets of $\Rel^*(L)$. 
\end{theorem}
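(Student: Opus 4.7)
The plan is to combine the semidistributivity of the lattice $\Wfs(L)$ of weak factorization systems on $L$ established in \cref{thm:semidistributive} with the description of the join-irreducibles given in \cref{pro:join-irreducible}. In any finite semidistributive lattice every element admits a unique \emph{canonical join representation} as an irredundant, minimal join of join-irreducibles, so the map sending $(\Lc,\Rc) \in \Wfs(L)$ to the set of its canonical join-summands yields a bijection between $\Wfs(L)$ and those subsets of the join-irreducibles that actually arise as canonical join representations. Since \cref{pro:join-irreducible} identifies the join-irreducibles with $\Rel^*(L)$ via $(a,b) \mapsto \Tr(a,b)$, the theorem will follow once we prove that the distinguished subsets are precisely the elevating ones.

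The next step is to reduce to the two-element case. For this I would invoke Barnard's theorem that the canonical join complex of a finite semidistributive lattice is a flag simplicial complex: a subset $S$ of join-irreducibles is a canonical join representation if and only if every two-element subset of $S$ is. Thus it suffices to prove the following pairwise criterion: $\{\Tr(a,b), \Tr(c,d)\}$ is the canonical join representation of $\Tr(a,b) \vee \Tr(c,d)$ if and only if $(a,b) \boxslash (c,d)$ and $(c,d) \boxslash (a,b)$.

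For this pairwise equivalence I would use the characterization of the left class via the left-lifting relation: $(x,y)$ lies in $\Lc$ if and only if $(x,y) \boxslash (u,v)$ for every $(u,v) \in \Rc$. Since the join in $\Wfs(L)$ corresponds to the intersection of left saturated sets, the pair $\{\Tr(a,b),\Tr(c,d)\}$ is irredundant exactly when $(a,b) \in \Lc(\Tr(c,d))$ and $(c,d) \in \Lc(\Tr(a,b))$. Using the minimality of $\Tr(c,d)$ as the smallest transfer system containing $(c,d)$, one would show that lifting against every element of $\Tr(c,d)$ is equivalent to the single condition $(a,b) \boxslash (c,d)$, and symmetrically for the other direction. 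Combined with irredundance, this should upgrade to canonical joinhood.

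The main obstacle is this last step: proving that the left-lifting property propagates along the closure operations (meet-closure and descent) that generate $\Tr(c,d)$ from the single relation $(c,d)$, so that lifting against the generator is equivalent to lifting against the whole transfer system. This is where the combinatorics of $L$ enters, and it is also the point where one must verify that the pairwise criterion coming from flagness really captures canonical joinhood. Should the general flagness result be insufficient, a fallback is to use the trim and congruence uniform structure of $\Wfs(L)$ also proved in this paper to describe canonical joins explicitly through the \emph{downward arrow} relation from join-irreducibles, and to verify directly that this relation on $\Rel^*(L)$ coincides with the symmetric lifting condition $\boxslash$.
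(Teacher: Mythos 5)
Your overall route is the paper's route: semidistributivity gives canonical join representations, \cref{pro:join-irreducible} identifies the join-irreducibles with $\Rel^*(L)$, and the problem reduces to showing that the subsets of join-irreducibles arising as canonical join representations are exactly the elevating sets. But there is a genuine gap at the pairwise step. You assert that the pair $\{\Tr(a,b),\Tr(c,d)\}$ is irredundant exactly when $(a,b)\in\Lc(\Tr(c,d))$ and $(c,d)\in\Lc(\Tr(a,b))$, and you then hope to ``upgrade'' irredundance to canonical joinhood. Both halves of this are problematic. Irredundance of the pair only says $(a,b)\notin\Tr(c,d)$ and $(c,d)\notin\Tr(a,b)$, which is strictly weaker than the lifting conditions $(a,b)\in\,^{\boxslash}\Tr(c,d)$ and $(c,d)\in\,^{\boxslash}\Tr(a,b)$: the left and right classes of a weak factorization system do not cover all morphisms. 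Concretely, on the commutative square the pair $\{\Tr(0,3),\Tr(1,3)\}$ is irredundant (the two transfer systems are incomparable), yet $(0,3)$ does not lift on the left $(1,3)$, there is no edge between them in the elevating graph of \cref{fig:square}, and indeed the canonical join representation of $\Tr(0,3)\vee\Tr(1,3)$ is $\{\Tr(0,1),\Tr(1,3)\}$, not your pair. So irredundance is neither equivalent to the lifting condition nor sufficient for canonical joinhood, and your argument never actually supplies a criterion for when an irredundant pair is canonical.

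What closes the gap is precisely the ingredient you relegate to a ``fallback'': the Barnard--Hanson characterization (\cref{thm:barnard_hanson}, i.e.\ Theorem C of \cite{barnard2022exceptional}) that $S$ is a canonical join representation if and only if $x\leq\kappa(y)$ for all distinct $x,y\in S$, where $\kappa$ is the canonical bijection from join-irreducibles to meet-irreducibles. This already encodes the reduction to pairs, so flagness is not needed as a separate input. The remaining work — the ``main obstacle'' you correctly identify — is to compute $\kappa(\Tr(c,d))$ and show that the containment $\Tr(a,b)\subseteq\kappa(\Tr(c,d))$ collapses to the single lifting condition $(c,d)\boxslash(a,b)$. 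The paper does this in \cref{pro:kappa}: one first proves $\Ls(c,d)^{\boxslash}=(c,d)^{\boxslash}$ by pasting pushout squares (so lifting against the generator is equivalent to lifting against the whole saturated closure), then identifies $\Tr(c,d)_*=\Tr(c,d)\cap(c,d)^{\boxslash}$ using the cover-relation analysis of \cref{lem:cov_relations}, which pins down $\kappa(\Tr(c,d))=(c,d)^{\boxslash}$. Since $\Tr(a,b)$ is the smallest transfer system containing $(a,b)$ and $(c,d)^{\boxslash}$ is a transfer system, $\Tr(a,b)\subseteq(c,d)^{\boxslash}$ is equivalent to $(a,b)\in(c,d)^{\boxslash}$, which gives the elevating condition. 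Without this computation of $\kappa$, your proof does not go through.
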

The bijection is moreover explicit: if $S$ is an elevating set, then the corresponding transfer system is the smallest transfer system containing $S$. The inverse bijection is more subtle: in any semidistributive lattice there is a \emp{labelling of the cover relations} by the join-irreducible elements of the lattice. For transfer systems a cover relation $\Rc_1 \lessdot \Rc$ is labelled by the unique non-trivial relation of $\Rc \cap \,^{\boxslash} \Rc_1$ (see \cref{lem:join label}). Given a transfer system $\Rc$, the corresponding elevating set is the set consisting of the labels of all the lower cover relations $\Rc'\lessdot \Rc$. 

The set of all the canonical join-representations of the elements of a semidistributive lattice $L$ can be naturally viewed as a \emp{simplicial complex}. It has been proved by Emily Barnard \cite{barnard_canonical} that this simplicial complex is a \defn{flag}. This means that it is the \defn{clique complex} of its $1$-skeleton. In our setting we call it the \defn{elevating graph} of the lattice $L$. This is the \emp{graph} whose \emp{vertices} are the non-trivial relations of $(L,\leq)$ and there is an \emp{edge} between $(a,b)$ and $(c,d)$ if and only if $(a,b) \boxslash (c,d)$ and $(c,d) \boxslash (a,b)$. For the commutative square this graph is illustrated in \cref{fig:square}. Recall that a clique is a complete (induced) subgraph. It follows that there are as \emp{many transfer systems} on $L$ as there are \emp{cliques in the elevating graph} of $L$. For example, there are $10$ cliques in the elevating graph of the commutative square: the empty graph, $5$ vertices and $4$ edges, so there are $10$ transfer systems on this lattice. To summarize we have:

\begin{figure}
\begin{center}
\scalebox{0.7}{
\tikzstyle{block} = [rectangle, fill=red!20,
    text width=7em, text centered, rounded corners, minimum height=1em, node distance=3.5cm]
\tikzstyle{line} = [draw, very thick, color=black, -latex']
\begin{tikzpicture}[scale=4, node distance = 2cm, auto]
\node [block] (0) {
$\xymatrix{
 & 3 &  \\
1 &  & 2 \\
 & 0 & 
}$
};
\node [block,above left of=0, node distance=6cm,fill=blue!20] (1) {
$\xymatrix{
 & 3 &  \\
1 &  & 2 \\
 & 0 \ar@{->}[lu] & 
}$
};
\node [block,above right of=0, node distance=6cm,fill=blue!20] (2) {
$\xymatrix{
 & 3 &  \\
1 &  & 2 \\
 & 0 \ar@{->}[ru] & 
}$
};
\node [block,above right of=1, node distance=6cm] (3) {
$\xymatrix{
 & 3 &  \\
1 &  & 2 \\
 & 0 \ar@{->}[ru] \ar@{->}[lu] & 
}$
};
\node [block,above left of=1, node distance=6cm,fill=blue!20] (4) {
\xymatrix{
 & 3 &  \\
1 &  & 2 \ar@{->}[lu] \\
 & 0 \ar@{->}[lu] & 
}
};
\node [block,above right of=2, node distance=6cm,fill=blue!20] (5) {
\xymatrix{
 & 3 &  \\
1 \ar@{->}[ru] &  & 2 \\
 & 0\ar@{->}[ru] & 
}
};
\node [block,above of=3, node distance=6cm,fill=blue!20] (6) {
\xymatrix{
 & 3 &  \\
1 &  & 2\\
 & 0\ar@{->}[ru] \ar@{->}[lu] \ar@{->}[uu] & 
}
};
\node [block,above left of=6, node distance=6cm] (7) {
\xymatrix{
 & 3 &  \\
1 &  & 2 \ar@{->}[lu] \\
 & 0\ar@{->}[ru] \ar@{->}[lu] \ar@{.>}[uu] & 
}
};
\node [block,above right of=6, node distance=6cm] (8) {
\xymatrix{
 & 3 &  \\
1 \ar@{->}[ru] &  & 2 \\
 & 0 \ar@{->}[ru] \ar@{->}[lu] \ar@{.>}[uu] & 
}
};
\node [block,above right of=7, node distance=6cm] (9) {
\xymatrix{
 & 3 &  \\
1 \ar@{->}[ru] &  &2 \ar@{->}[lu] \\
 & 0 \ar@{->}[ru] \ar@{->}[lu] \ar@{.>}[uu] & 
}
};
\path [line] (0) -- (1) node[midway,blue,anchor=north,xshift=-0.5cm] {$(0,1)$};
\path [line] (1)--(3) node[midway,blue,anchor=north,xshift=0.5cm] {$(0,2)$};
\path [line] (1)--(4) node[midway,blue,anchor=north,xshift=-0.5cm] {$(2,3)$};
\path [line] (2)--(5) node[midway,blue,anchor=north,xshift=0.5cm] {$(1,3)$};
\path [line] (2)--(3) node[midway,blue,anchor=north,xshift=-0.5cm] {$(0,1)$};
\path [line] (0)--(2) node[midway,blue,anchor=north,xshift=0.5cm] {$(0,2)$};
\path [line] (3)--(6) node[midway,blue,anchor=north,xshift=0.5cm] {$(0,3)$};
\path [line] (6)--(7) node[midway,blue,anchor=north,xshift=-0.5cm] {$(2,3)$};
\path [line] (6)--(8) node[midway,blue,anchor=north,xshift=0.5cm] {$(1,3)$};
\path [line] (4)--(7) node[midway,blue,anchor=north,xshift=0.5cm] {$(0,2)$};
\path [line] (5)--(8) node[midway,blue,anchor=north,xshift=-0.5cm] {$(0,1)$};
\path [line] (8)--(9)node[midway,blue,anchor=north,xshift=-0.5cm] {$(2,3)$};
\path [line] (7)--(9)node[midway,blue,anchor=north,xshift=0.5cm] {$(1,3)$};
\end{tikzpicture}
}
\end{center}
\caption{Lattice of transfer systems of the commutative square. The transfer systems in blue are join-irreducible. The labelling  of the edges is the join labelling. 
}\label{fig:commutative_square}
\end{figure}
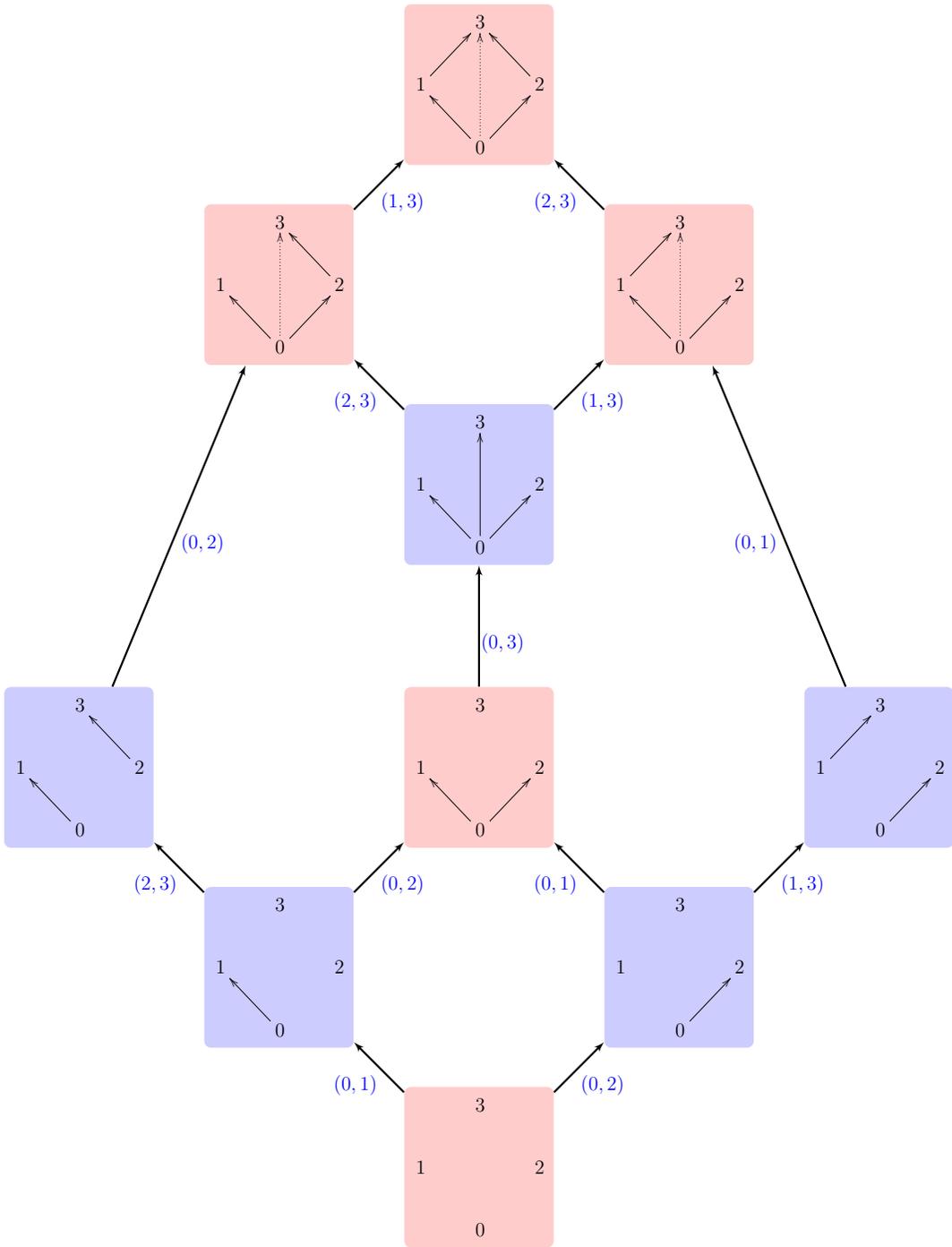

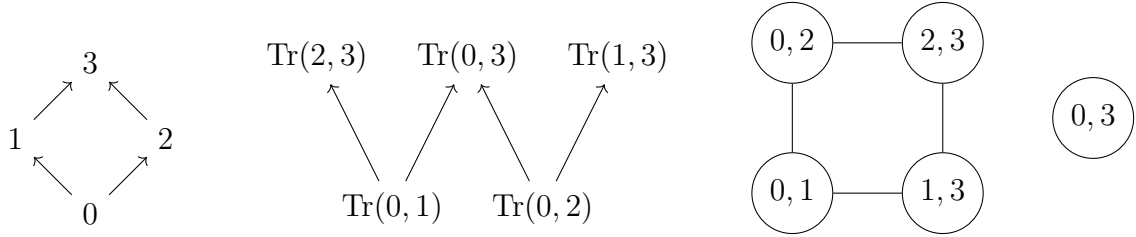
\begin{figure}[h!]
\centering 
\begin{tikzpicture}
\node (A) at (0,0) {$0$};
 \node (B) at (-1,1) {$1$};
\node (C) at (1,1) {$2$};
\node (D) at (0,2) {$3$};
\draw[->] (A) --(B);
\draw[->] (B)--(D);
\draw[->] (A)--(C);
\draw[->] (C)--(D);
\end{tikzpicture}
\qquad 
 \begin{tikzpicture}
\node (A) at (1,0) {$\Tr(0,1)$};
 \node (B) at (3,0) {$\Tr(0,2)$};
\node (C) at (0,2) {$\Tr(2,3)$};
\node (D) at (2,2) {$\Tr(0,3)$};
\node (E) at (4,2) {$\Tr(1,3)$};
\draw[->] (A) --(C);
\draw[->] (A) --(D);
\draw[->] (B) --(D);
\draw[->] (B) --(E);
\end{tikzpicture}
\qquad 
\begin{tikzpicture}
\node[shape=circle,draw=black] (A) at (0,0) {$0,1$};
 \node[shape=circle,draw=black] (B) at (0,2) {$0,2$};
\node[shape=circle,draw=black] (C) at (2,0) {$1,3$};
\node[shape=circle,draw=black] (D) at (2,2) {$2,3$};
\node[shape=circle,draw=black] (E) at (4,1) {$0,3$};
\draw (A) --(B)--(D)--(C)--(A) ;
\end{tikzpicture}
\caption{From left to right: the lattice $L$, the poset of join-irreducible transfer systems, and the elevating graph of $L$. One can check that each clique of the graph corresponds to the set of labellings of the lower covers of a transfer system.}\label{fig:square}
\end{figure}
\begin{theorem}\label{thm:cliques}
Let $(L,\leq)$ be a finite lattice. There is a bijection between the set of weak factorization systems on $L$ and the cliques of its elevating graph. 
\end{theorem}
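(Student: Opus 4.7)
The plan is to derive Theorem \ref{thm:cliques} as an essentially immediate consequence of Theorem \ref{thm:semibricks} together with a direct comparison of definitions. By Theorem \ref{thm:semibricks}, the weak factorization systems on $L$ are already known to be in bijection with the elevating subsets of $\Rel^*(L)$, so it suffices to identify elevating sets with cliques in the elevating graph.

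First I would unpack the two notions and observe that they match tautologically. A subset $S \subseteq \Rel^*(L)$ is, by definition, elevating when for every pair $(a,b), (c,d) \in S$ one has $(a,b) \boxslash (c,d)$ and $(c,d) \boxslash (a,b)$. On the graph side, the elevating graph has vertex set $\Rel^*(L)$ and an edge between $(a,b)$ and $(c,d)$ precisely when $(a,b) \boxslash (c,d)$ and $(c,d) \boxslash (a,b)$ both hold. Hence a set $S$ of vertices spans a complete induced subgraph if and only if $S$ is elevating. Using the convention fixed in the introduction that cliques include the empty subgraph and singletons, this yields a bijection between cliques of the elevating graph and elevating subsets; composing with the bijection of Theorem \ref{thm:semibricks} then produces the desired bijection with weak factorization systems. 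For the boundary cases I would check that the empty clique corresponds to the smallest weak factorization system (whose transfer system consists only of identities), and that a singleton $\{(a,b)\}$ is automatically elevating because $(a,b) \in \Tr(a,b)$ and transfer systems are closed under pullback, so in particular $(a,b) \boxslash (a,b)$.

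The conceptual content of the theorem, namely that a weak factorization system is determined by pairwise compatibility data and is therefore encoded by a graph rather than a more complicated simplicial complex, is already absorbed in the definition of elevating set as a condition on pairs. This pairwise nature ultimately reflects Barnard's theorem that the simplicial complex of canonical join representations in a finite semidistributive lattice is flag. Consequently, the real obstacle lies upstream, in the proofs of Theorem \ref{thm:semidistributive} and Theorem \ref{thm:semibricks}; the deduction of Theorem \ref{thm:cliques} itself is a translation between two equivalent ways of speaking about the same combinatorial structure.
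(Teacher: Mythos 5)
Your proposal is correct and follows essentially the same route as the paper: the paper also deduces \cref{thm:cliques} from the fact that canonical join representations are characterized by a pairwise condition (it phrases this via Barnard's flag-complex theorem, while you route it through \cref{thm:semibricks}, whose proof already rests on the same pairwise Barnard--Hanson criterion), after which the identification of elevating sets with cliques is definitional. One small correction to your boundary-case check: $(a,b) \boxslash (a,b)$ is in fact \emph{false} for a non-trivial relation, since the commutative square with identity horizontals and $(a,b)$ on both verticals would need a lift $b\to a$, i.e.\ $b\leq a$; singletons are nonetheless elevating, but only vacuously, because the definition of an elevating set quantifies over pairs of \emph{distinct} relations.
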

We deduce from this result a relatively efficient algorithm for counting the weak factorization systems on a (not too big) lattice. It is known that counting cliques in a graph is a difficult task (it is $\sharp P$-complete) however the elevating graph is much smaller than the lattice of transfer systems. For example if $L = \mathcal{P}([4])$ is the boolean lattice with $16$ elements, there are $5389480$ weak factorization systems but the elevating graph of $L$ has only $65$ vertices and $1474$ edges. 

Moreover any subgraph of a clique is also a clique, so if there is a clique of size $n$ in a graph,then there are at least $2^n$ cliques. In our setting the \emp{maximal size of a clique}, is the maximal size of an elevating set, hence it is the \defn{maximal number of lower cover relations} of a transfer system, which we denote by $\maxcov(L)$. 

\begin{corollary}
    Let $(L,\leq)$ be a finite lattice. There are at least $2^{\maxcov(L)}$ weak factorization systems on $L$.
\end{corollary}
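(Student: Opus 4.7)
The plan is to combine Theorem \ref{thm:cliques} with the elementary fact that every subset of a clique is again a clique. By Theorem \ref{thm:cliques}, weak factorization systems on $L$ are in bijection with the cliques of the elevating graph of $L$, so it suffices to exhibit at least $2^{\maxcov(L)}$ cliques in that graph.

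First I would unpack the definition of $\maxcov(L)$. By Theorem \ref{thm:semibricks} (combined with the discussion of the labelling of cover relations that precedes it), the elevating subsets of $\Rel^*(L)$ are exactly the sets of labels of the lower covers of transfer systems, so the maximum size of an elevating set is precisely $\maxcov(L)$. Under the bijection of Theorem \ref{thm:cliques} this maximum size coincides with the clique number of the elevating graph. In particular, there exists a clique $C$ of size $\maxcov(L)$.

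Next, since the subgraph of the elevating graph induced on $C$ is complete, every subset $S \subseteq C$ still induces a complete subgraph, i.e.\ is a clique. This gives $2^{|C|} = 2^{\maxcov(L)}$ distinct cliques, and therefore at least $2^{\maxcov(L)}$ distinct weak factorization systems on $L$ via Theorem \ref{thm:cliques}.

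There is no real obstacle here; the statement is a formal consequence of the clique interpretation, and the only thing worth double-checking is that the maximum size of an elevating set really equals $\maxcov(L)$, which is immediate from the description of the canonical join-representation in terms of labels of lower covers given just before Theorem \ref{thm:semibricks}.
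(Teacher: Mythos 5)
Your proof is correct and follows essentially the same route as the paper: the bound is obtained by combining \cref{thm:cliques} with the observation that every subset of a clique is a clique, together with the identification of $\maxcov(L)$ as the clique number of the elevating graph via the labelling of lower covers. Nothing is missing.
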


The lattice of weak factorization system is \emp{not regular} in general and the number $\maxcov(L)$ seems difficult to determine in general, but in the special case of the boolean lattice we obtain the following lower bound:

\begin{proposition}
    Let $L$ be the boolean lattice of the subsets of $[n]$. Then, there are at least $2^{a_n}$ transfer systems on $L$, where
    \begin{enumerate}
        \item If $n$ is odd, $a_n= \sum_{j=0}^{\frac{n-1}{2}} \binom{n}{j} \binom{n-j}{n-2j}$.
        \item If $n$ is even, $a_n=\sum_{j=1}^{\frac{n}{2}} \binom{n}{j}\binom{n-j}{n+1-2j}.$
    \end{enumerate}
    
\end{proposition}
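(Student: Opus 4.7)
The plan is to exhibit an explicit elevating set of size $a_n$ in the boolean lattice $L=\mathcal{P}([n])$; by the preceding corollary this yields at least $2^{a_n}$ weak factorization systems on $L$, and these are in bijection with the transfer systems on $L$.

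Set $c_n=n$ when $n$ is odd and $c_n=n+1$ when $n$ is even, and consider
\[
S_n:=\{(A,B)\in\Rel^*(L):|A|+|B|=c_n\}.
\]
The key observation is that $S_n$ is automatically an elevating set. Indeed, given two distinct relations $(A,B),(C,D)\in S_n$, the hypothesis of $(A,B)\boxslash(C,D)$ requires $A\subseteq C$ and $B\subseteq D$, which gives $|A|+|B|\leq|C|+|D|$; but both sides equal $c_n$, so $|A|=|C|$ and $|B|=|D|$, hence $A=C$ and $B=D$, contradicting distinctness. The lifting therefore holds vacuously, and the symmetric lifting $(C,D)\boxslash(A,B)$ is identical.

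It remains to verify $|S_n|=a_n$, which I would do by stratifying over $j=|A|$: each such $j$ contributes $\binom{n}{j}\binom{n-j}{c_n-2j}$ relations (choose $A$ in $\binom{n}{j}$ ways, then choose $B\setminus A\subseteq [n]\setminus A$ in $\binom{n-j}{c_n-2j}$ ways). For $n$ odd, the non-triviality constraint $|B\setminus A|\geq 1$ gives $0\leq j\leq (n-1)/2$, and the identity $\binom{n-j}{n-2j}=\binom{n-j}{j}$ recovers the first formula; for $n$ even, the additional constraint $|B|\leq n$ forces $j\geq 1$, giving $1\leq j\leq n/2$ and the second formula.

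The only creative ingredient is the choice of $S_n$: once the relations are constrained by $|A|+|B|=c_n$, the elevating property follows from a trivial cardinality count, so there is no substantive obstacle beyond identifying this construction. The parity of $c_n$ is what determines the starting index of the sum, explaining the dichotomy between the two formulas.
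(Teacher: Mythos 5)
Your proof is correct, and it takes a genuinely more direct route than the paper. The paper's argument constructs, for each $k$, an explicit transfer system $\Rc_k$ on $B_n$ (namely $X\leq_{\Rc_k}Y$ iff $X=Y$, or $X\subseteq Y$ and $|X|+|Y|\leq k$), proves it is a transfer system, and then counts its lower cover relations in $\Trs(B_n)$; the delicate step there is showing that the cover relations of $\Rc_k$ with $|X|+|Y|<k$ are never maximal for $\sqsubset$, so that exactly the relations with $|X|+|Y|=k$ label the lower covers. The resulting label set is precisely your $S_n$ (with $k=c_n$), and it is an elevating set because it is the canonical join representation of $\Rc_k$. You bypass all of this machinery: since a commutative square from $(A,B)$ to $(C,D)$ in a poset forces $A\subseteq C$ and $B\subseteq D$, equality of $|A|+|B|$ and $|C|+|D|$ forces $(A,B)=(C,D)$, so the lifting conditions between distinct members of $S_n$ hold vacuously and $S_n$ is a clique of the elevating graph; the binomial stratification over $j=|A|$ is then identical to the paper's count. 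What you lose is the explicit transfer system realizing $S_n$ as its set of lower-cover labels, which is what motivates the paper's question of whether $a_n$ actually equals $\maxcov(B_n)$; what you gain is a much shorter verification. One point you should make explicit rather than leave implicit in the phrase ``by the preceding corollary'': either invoke that the maximal size of an elevating set equals $\maxcov(L)$ (so your clique gives $\maxcov(B_n)\geq a_n$ and the corollary applies), or argue directly that every subset of an elevating set is elevating and that distinct elevating sets correspond to distinct transfer systems under the bijection of the theorem on elevating sets, which immediately yields the $2^{a_n}$ bound.
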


Our next result involves the notion of \defn{trim} lattice which was introduced by Hugh Thomas in \cite{thomas_trim} as a non-graded generalization of \emp{distributive lattice}, we refer to \cref{sec:trim} for the definition and for more details. The notions of trimness and semidistributivity are independent of each other: there are examples of trim lattices that are not semidistributive and conversely. However, it has been proved in \cite{TW} that a semidistributive lattice which is also \defn{extremal} is trim. Here by an extremal, we mean a lattice whose length is equal to its number of join-irreducible elements and also to the number of meet-irreducible elements. 

\begin{theorem}
Let $(L,\leq)$ be a finite lattice. Then the lattice of weak factorization systems on $L$ is a \emp{trim lattice}.
\end{theorem}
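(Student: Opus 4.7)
The plan is to invoke the result of Thomas and Williams \cite{TW}, which asserts that every finite semidistributive extremal lattice is trim. Since semidistributivity is already provided by \cref{thm:semidistributive}, the task reduces to establishing extremality of the lattice of weak factorization systems on $L$: one must verify that its length equals the number of join-irreducibles and also the number of meet-irreducibles.

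By \cref{pro:join-irreducible}, the number of join-irreducibles is $n := |\Rel^*(L)|$. Since the lattice is semidistributive, the standard $\kappa$-map bijection between join- and meet-irreducibles yields the same cardinality for the meet-irreducibles. Moreover, the join-labelling of covers in a finite semidistributive lattice assigns pairwise distinct join-irreducibles to the covers of any fixed maximal chain, so the length is automatically at most $n$. It thus remains to prove the lower bound on the length by producing an explicit maximal chain of length $n$.

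The construction I have in mind is the following: pick a linear extension $r_1, r_2, \ldots, r_n$ of the poset $(\Rel^*(L), \sqsubset)$ and set $\Rc_i := \Tr(r_1) \vee \cdots \vee \Tr(r_i)$. Then $\Rc_0$ is the minimal transfer system on $L$, $\Rc_n$ is the maximal one, and $(\Rc_i)_{i=0}^{n}$ is a chain in the lattice of transfer systems. The main obstacle, which is the technical heart of the proof, is to check that each inclusion $\Rc_{i-1} \subsetneq \Rc_i$ is actually a cover relation. A priori, adding $r_i$ may force additional relations into $\Rc_i$ via the pullback and composition closures, so one could be adding several new elements when passing from $\Rc_{i-1}$ to $\Rc_i$. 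The choice of linear extension handles the pullback closure: any pullback of $r_i$ produces some relation $r \sqsubset r_i$, which by the extension property is of the form $r_j$ with $j < i$ and hence already belongs to $\Rc_{i-1}$. The relations produced by subsequent composition with previously added ones need to be analyzed more delicately; the way to proceed is to compute the join-label of the cover $\Rc_{i-1} \lessdot \Rc_i$ using \cref{lem:join label} and show that it is precisely the join-irreducible $\Tr(r_i)$. Once the cover property at each step is verified, the chain realizes each of the $n$ join-irreducibles exactly once as a label and so has length exactly $n$, which combined with the upper bound yields extremality and concludes the proof.
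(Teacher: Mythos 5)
Your overall strategy (Thomas--Williams: extremal $+$ semidistributive $\Rightarrow$ trim, so it suffices to prove extremality) is exactly the paper's, and your counting of join-irreducibles, the $\kappa$-bijection for the meet-irreducibles, and the upper bound on the length are all fine. The gap is in the construction of a maximal chain of length $n=|\Rel^*(L)|$: the claim that for a linear extension $r_1,\dots,r_n$ of $(\Rel^*(L),\sqsubset)$ the partial joins $\Rc_i=\Tr(r_1)\vee\dots\vee\Tr(r_i)$ form a saturated chain is false for a general linear extension. Take $L$ to be the $3$-chain $0<1<2$. Then $(0,1)$ and $(1,2)$ are incomparable in $\sqsubset$ and $(0,1)\sqsubset(0,2)$, so $r_1=(0,1)$, $r_2=(1,2)$, $r_3=(0,2)$ is a linear extension. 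But by \cref{lem:smallest_ts} the join $\Tr(0,1)\vee\Tr(1,2)$ is the transitive closure of $\{(0,1),(1,2)\}\cup\Delta(L)$, which already contains $(0,2)$; hence $\Rc_2=\Rc_3$ is the top element and your chain has length $2<3$ (moreover the step $\Rc_1\subsetneq\Rc_2$ adds two relations and is not a cover). This is precisely the composition phenomenon you flag as needing ``more delicate analysis,'' but your proposed resolution --- computing the join label of the cover $\Rc_{i-1}\lessdot\Rc_i$ via \cref{lem:join label} --- is circular, since it presupposes the cover property that is exactly what fails.

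So the argument needs either a proof that a *suitable* linear extension exists together with a criterion for choosing it, or a different construction. The paper (\cref{pro:extremal}) builds the chain in the opposite direction: starting from the full relation, it repeatedly removes a single covering relation $(x,y)\in\Rc^c$ whose target $y$ is maximal in $(L,\leq)$ among targets of elements of $\Rc^c$, and shows via \cref{lem:lower_ts} that such an $(x,y)$ is $\sqsubset$-maximal in $\Rc$, so that $\Rc_{(x,y)}=\Rc\setminus\{(x,y)\}$ is a lower cover removing exactly one relation. Iterating yields a chain of length $|\Rel^*(L)|$. Some such explicit choice is unavoidable; ``pick a linear extension'' is not enough.
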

As a comparison, \emp{the lattice of torsion pairs} of artinian algebras is in general \emp{not a trim lattice}. Trimness occurs for example for the lattice of torsion pairs of the path algebra of \emp{Dynkin type} and for incidence algebras of posets of finite representation type (see \cite{TW}). For a counterexample, the lattice of torsion pairs of a \emp{preprojective algebra} of Dynkin type is isomorphic to the weak Bruhat ordering of the corresponding Weyl group (\cite{mizuno2014classifying}) and this is not a trim lattice. 

Trim lattices have an important distributive sublattice which is called the \defn{spine}. It consists of the elements on the chains of maximal length. We prove that a weak factorization system $(\Lc,\Rc)$ is on the spine if and only $\Lc \cup \Rc = \Rel(L)$. In other words if and only if every morphism is either in $\Lc$ or in $\Rc$. If $(a,b)$ is a non-trivial relation of $L$, then it can be seen as the interval $\{ x\in L\ |\ a\leq x \leq b\}$ of the poset. These intervals can be naturally ordered by setting $(a,b)\preceq (c,d)$ if and only if $a\leq c$ and $b\leq d$. We obtain the following representation of the spine:

\begin{proposition}
Let $(L,\leq)$ be a finite lattice. Then the spine of $\Trs(L)$ is isomorphic to $\ideal\big((\Rel^*(L),\preceq)^{op}\big)$.
\end{proposition}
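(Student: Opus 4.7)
Since $\Trs(L)$ has been shown to be a trim lattice, the spine is automatically a distributive sublattice. Combining this with the characterisation recalled just before the statement ($\Rc$ is on the spine iff $\Lc(\Rc)\cup\Rc=\Rel(L)$), the plan is to use the complementation map $\Phi(\Rc)=\Rel^*(L)\setminus\Rc$ — equivalently the non-trivial part of $\Lc(\Rc)$ — as an inclusion-reversing bijection from the spine onto $\ideal((\Rel^*(L),\preceq)^{op})$. The standard Birkhoff duality $\ideal(Q)\cong\ideal(Q^{op})^{op}$ then converts this inclusion-reversing bijection into the stated isomorphism of distributive lattices.

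The main obstacle is showing that $\Phi(\Rc)$ really is an order ideal of $(\Rel^*(L),\preceq)^{op}$, equivalently that $\Rc\cap\Rel^*(L)$ is downward closed in $(\Rel^*(L),\preceq)$. I would argue by contradiction: fix $(c,d)\in\Rc\cap\Rel^*(L)$ and choose $(a,b)\preceq(c,d)$ in $\Rel^*(L)\setminus\Rc$ maximal for $\preceq$. The spine hypothesis places $(a,b)\in\Lc(\Rc)$, and the lift $(a,b)\boxslash(c,d)$ applied to the square $a\leq c$, $b\leq d$ forces $b\leq c$. Because $(c,d)\in\Rel^*(L)$, the case $b=d$ would entail $b=c=d$, so $b<d$; then $(a,d)$ is strictly above $(a,b)$ in $\preceq$ and still $\preceq(c,d)$, so maximality places $(a,d)\in\Rc$. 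Pulling $(a,d)$ back along $b\leq d$ yields $(a\wedge b,b)=(a,b)$ because $a\leq b$, which must then lie in $\Rc$ by pullback closure, contradicting the choice of $(a,b)$.

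The reverse direction and the lattice-theoretic compatibility are routine. Given any upward closed $F\subseteq\Rel^*(L)$, I would set $\Rc_F=(\Rel^*(L)\setminus F)\cup\{(x,x):x\in L\}$. Transitivity of $\Rc_F$ follows from $(a,c)\preceq(b,c)$ whenever $a\leq b$, forcing $(a,c)\notin F$ from $(b,c)\notin F$ by upward closure; pullback closure uses $(c\wedge x,x)\preceq(c,d)$ in the same way. The spine property is equally automatic: for $(a,b)\in F$ and $(c,d)\in\Rel^*(L)\setminus F$, the joint premises of $(a,b)\boxslash(c,d)$ would give $(a,b)\preceq(c,d)\in\Rel^*(L)\setminus F$, contradicting upward closure, so $(a,b)\boxslash(c,d)$ holds vacuously and $(a,b)\in\Lc(\Rc_F)$. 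Hence $\Phi$ and $F\mapsto\Rc_F$ are mutually inverse, and together with the duality invoked above this yields the claimed isomorphism.
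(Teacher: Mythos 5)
Your route is genuinely different from the paper's, and its core is correct. The paper obtains the result as a by-product of trimness: the spine of a trim lattice is $\ideal(P)$ where $P$ is the transitive closure of the Galois graph (Thomas--Williams), and that transitive closure is then computed to be the componentwise order on intervals. You instead argue directly from the characterisation of the spine by $\Lc\cup\Rc=\Rel(L)$: the maximality argument showing that $\Rc\cap\Rel^*(L)$ is $\preceq$-downward closed when $\Rc$ is on the spine, and the converse construction $\Rc_F=(\Rel^*(L)\setminus F)\cup\Delta(L)$, are both correct (the only omission is the trivial remark that $(a,b)\boxslash(z,z)$ always holds, needed to conclude $(a,b)\in{}^{\boxslash}\Rc_F$). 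This is more elementary and more self-contained than the published argument, and it exhibits the bijection explicitly.

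The problem is the very last step. What you have actually produced is an inclusion-\emph{preserving} bijection $\Rc\mapsto\Rc\cap\Rel^*(L)$ from the spine onto the order ideals of $(\Rel^*(L),\preceq)$; equivalently, composing your order-reversing $\Phi$ with the order-reversing complementation $\ideal(Q^{op})\to\ideal(Q)$ yields an isomorphism of the spine with $\ideal(\Rel^*(L),\preceq)$, not with $\ideal\big((\Rel^*(L),\preceq)^{op}\big)$. These two lattices are not isomorphic in general, so no duality can bridge the gap. Take $L=\{0,1,2,3,4\}$ to be the commutative square with an extra top element $4$ covering $3$. Then $(\Rel^*(L),\preceq)$ has two minimal elements $(0,1),(0,2)$ and a unique maximal element $(3,4)$, so $\ideal(\Rel^*(L),\preceq)$ has two atoms while $\ideal\big((\Rel^*(L),\preceq)^{op}\big)$ has only one; on the other hand the spine of $\Trs(L)$ has exactly two atoms, namely $\Tr(0,1)$ and $\Tr(0,2)$, because the only other singleton transfer system $\Tr(3,4)=\{(3,4)\}\cup\Delta(L)$ is not on the spine (the relation $(0,4)$ is neither in it nor does it lift $(3,4)$ on the left). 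So your argument proves that the spine is isomorphic to $\ideal(\Rel^*(L),\preceq)$, which is the correct statement; the ${}^{op}$ in the proposition as printed is a sign error (already visible in the mismatch between the introduction's $\preceq$ and the opposite convention used in the body of the paper), and your closing appeal to $\ideal(Q)\cong\ideal(Q^{op})^{op}$ does not, and cannot, reinstate it. State the conclusion as the order-preserving bijection onto $\ideal(\Rel^*(L),\preceq)$ and the proof is complete.
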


The following results concern the \defn{congruences} of the lattice of weak factorization systems. Recall that an equivalence relation compatible with the meet and the join of the lattice is called a congruence. The set $\Con(L)$ of all congruences on $L$ is also a lattice when ordered by refinement and it is known to be a \emp{distributive} lattice. The Birkhoff theorem tells us that $\Con(L) = \ideal(P)$ where $P$ is the poset of join-irreducible congruences. Roughly speaking a lattice $L$ is \defn{congruence uniform} when the irreducible congruences are in bijection with the irreducible elements of $L$, we refer to \cref{sec:congruence_uniform} for a more precise statement. By labelling the cover relations in the lattice of weak factorization systems (a consequence of semidistributivity), we are able to prove the following result. 

\begin{theorem}\label{thm:cong_unif}
Let $(L,\leq)$ be a finite lattice. Then the lattice of weak factorization systems on $L$ is a \emp{congruence uniform} lattice.
\end{theorem}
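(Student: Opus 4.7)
The plan is to apply the standard characterization that a finite semidistributive lattice is congruence uniform if and only if the \emph{forcing preorder} on its join-irreducibles is antisymmetric. Since \cref{thm:semidistributive} establishes semidistributivity and \cref{pro:join-irreducible} identifies the join-irreducibles with $\Rel^*(L)$, the task reduces to verifying acyclicity of the forcing relation on $\Rel^*(L)$.

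First I would unpack the join-labelling from \cref{lem:join label}: a cover $\Rc_1\lessdot \Rc_2$ carries the label $(a,b)$ precisely when $\Rc_2=\Tr(\Rc_1\cup\{(a,b)\})$ and $(a,b)$ is the unique non-trivial relation in $\Rc_2\cap\,^{\boxslash}\Rc_1$. By semidistributivity each cover also carries a dual meet-label, recording the corresponding change in the associated left saturated set $\Lc$. Recall that the forcing preorder is generated by pairs of perspective cover relations across polygons, that is, across intervals with exactly two atoms and two coatoms whose two sides are chains; in such a polygon the edge opposite to one labelled $(a,b)$ has a well-defined label $(c,d)$, and $(a,b)$ is said to force $(c,d)$.

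The core step is to produce an auxiliary partial order on $\Rel^*(L)$ that witnesses antisymmetry of forcing. A natural candidate is $(c,d)\preceq (a,b)$ iff $(c,d)\in \Tr(a,b)$. Using that $\Tr(a,b)$ is the smallest transfer system containing $(a,b)$, together with a careful analysis of the lift relation $\boxslash$, one should show that whenever $(a,b)$ forces $(c,d)$ through a polygon with $(a,b)\neq (c,d)$, one has $(c,d)\prec (a,b)$ in this auxiliary order. The technical heart of the argument, and the principal obstacle, will be to analyse how the closure operator $\Tr(-)$ interacts with polygons, so that the label opposite $(a,b)$ in a polygon is indeed a relation that already belongs to $\Tr(a,b)$ and is distinct from $(a,b)$. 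This is where the combinatorics of pullbacks in $L$ and of the saturation rules defining a transfer system will have to be exploited in detail.

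Finally, antisymmetry of forcing gives a bijection between the join-irreducibles of the lattice of weak factorization systems and the atoms of its congruence lattice, which is precisely the condition for congruence uniformity. Equivalently, this exhibits the lattice as obtainable from the one-element lattice by a sequence of interval doublings in the sense of Day, which is another way to phrase the same conclusion.
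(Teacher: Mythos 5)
Your high-level reduction is the same as the paper's: semidistributivity plus injectivity of $j \mapsto \con(j_*\to j)$, which the paper packages as \cref{lem:reduction}, and which indeed reduces everything to showing that no two distinct join-irreducibles are forcing equivalent. The problem is the key technical step you propose. Your candidate auxiliary order, $(c,d)\preceq (a,b)$ iff $(c,d)\in \Tr(a,b)$ (i.e.\ the pullback order $\sqsubset$), does not majorize the forcing preorder, so the claim ``$(a,b)$ forces $(c,d)$ with $(a,b)\neq(c,d)$ implies $(c,d)\prec(a,b)$'' is false. Concretely, take $L$ the commutative square $0<1,2<3$. By \cref{lem:smallest_ts}, $\Tr(0,1)=\{(0,1)\}\cup\Delta(L)$, yet contracting an edge labelled $(0,1)$ forces contracting edges labelled $(0,3)$: in the pentagon $[\Tr(0,1),\,\Tr(0,1)\lor\Tr(1,3)]$ of \cref{fig:commutative_square}, the congruence identifying $\Tr(0,1)$ with its lower cover propagates up the long side and contracts the edge labelled $(0,3)$ (this is the content of the paper's \cref{lemma: Con(q) and Con(q') 2} with $a=0$, $b=1$, $c=3$). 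Since $(0,3)\notin\Tr(0,1)$, at least one one-step polygon forcing fails to decrease your order, and the induction collapses. The order that actually controls forcing is reverse \emph{interval containment}, $(a,b)\subseteq(c,d)\Leftrightarrow c\leq a$ and $b\leq d$ (\cref{prop:forcing}), which is a genuinely different relation from $\sqsubset$.

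Moreover, even with the right order in hand, the real work is not a local polygon analysis but the construction, for each $(a,b)\in\Rel^*(L)$, of an explicit congruence $\equiv_{(a,b)}$ (declaring $\Rc_1\equiv_{(a,b)}\Rc_2$ when every non-trivial relation in $(\Rc_1\vee\Rc_2)\cap\,^{\boxslash}(\Rc_1\wedge\Rc_2)$ contains the interval $(a,b)$) together with the verification that it is compatible with meets and joins of transfer systems; that verification is where the pullback/pushout combinatorics of transfer systems enters, and it is the part your proposal defers entirely. A minor further point: congruence uniformity pairs join-irreducibles with join-irreducible congruences, not with the atoms of $\Con$; this does not affect the strategy but the statement as written is inaccurate.
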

The non-trivial relations $a\leq b$ on $L$, when seen as intervals of the poset $L$ are also naturally ordered by containment $\subseteq$. Then we have:
\begin{theorem}\label{thm:lat_of_cong}
Let $(L,\leq)$ be a finite lattice. Then the lattice of congruences of the lattice of weak factorization systems on $L$ is isomorphic to $\ideal\big((\Rel^{*}(L),\subseteq)^{op}\big)$. 
\end{theorem}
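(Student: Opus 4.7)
The plan is to apply Birkhoff's representation theorem in conjunction with the congruence uniformity from \cref{thm:cong_unif}, and then explicitly identify the induced order on the poset of join-irreducible congruences with the reverse of interval containment on $\Rel^{*}(L)$.

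Since the congruence lattice of any finite lattice is distributive, Birkhoff's theorem yields $\Con(\Trs(L))\cong\ideal(P)$, where $P$ is the poset of join-irreducible congruences with the order inherited from $\Con(\Trs(L))$. By \cref{thm:cong_unif} together with \cref{pro:join-irreducible}, there is a canonical bijection
\[
(a,b)\longmapsto \alpha_{(a,b)}:=\con\bigl(\Tr(a,b),\Tr(a,b)_{*}\bigr)
\]
between $\Rel^{*}(L)$ and $P$. It therefore suffices to prove that, under this bijection, the induced order on $P$ is the opposite of interval inclusion, i.e.\ that $\alpha_{(c,d)}\leq\alpha_{(a,b)}$ if and only if $c\leq a\leq b\leq d$.

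For the direction $(\Leftarrow)$, assume $c\leq a\leq b\leq d$. The plan is to show that $\alpha_{(a,b)}$ collapses the cover $\Tr(c,d)_{*}\lessdot\Tr(c,d)$ by producing an explicit sequence of joins and meets in $\Trs(L)$ transporting the identification $\Tr(a,b)\sim\Tr(a,b)_{*}$ to the identification $\Tr(c,d)\sim\Tr(c,d)_{*}$. A useful preliminary observation, following from the pullback-closure characterization of transfer systems, is that $\Tr(a,b)$ equals $\{(a\wedge e,e):e\leq b\}$ and that $\Tr(a,b)_{*}=\Tr(a,b)\setminus\{(a,b)\}$; this makes the join labelling of \cref{lem:join label} fully explicit and provides convenient intermediate transfer systems (such as $\Tr(a,b)\vee\Tr(c,d)_{*}$ and its lower covers) through which the propagation can be routed, the inclusion $[a,b]\subseteq [c,d]$ being precisely what keeps these routes open.

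The converse direction $(\Rightarrow)$ is the main obstacle. The plan is to argue by contrapositive: if $c\not\leq a$ or $b\not\leq d$, one tracks the covers collapsed by $\alpha_{(a,b)}$ using the join labelling of \cref{lem:join label} and the congruence closure rules, and verifies that no cover labelled $(c,d)$ is among them, essentially because the pullback composites required to link $(a,b)$-labelled covers to $(c,d)$-labelled covers are obstructed by the failure of the interval containment. Combining the two directions identifies $P$ with $(\Rel^{*}(L),\subseteq)^{op}$, and Birkhoff then gives the stated isomorphism
\[
\Con(\Trs(L))\cong\ideal(P)\cong\ideal\bigl((\Rel^{*}(L),\subseteq)^{op}\bigr).
\]
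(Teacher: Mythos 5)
Your overall strategy is the same as the paper's: reduce via Birkhoff's theorem and \cref{thm:cong_unif} to identifying the order induced on the join-irreducible congruences $\alpha_{(a,b)}=\con\bigl(\Tr(a,b)_*\lessdot\Tr(a,b)\bigr)$ with reverse interval containment on $\Rel^*(L)$. The direction you call $(\Leftarrow)$ is only sketched, but the intermediate objects you name (e.g.\ $\Tr(a,b)\vee\Tr(c,d)_*$) are essentially the right ones: the paper carries this out by splitting the containment $c\leq a\leq b\leq d$ into two one-endpoint moves and computing, for instance, that $\bigl(\Tr(a,b)\vee\Tr(b,c)_*\bigr)\wedge\Tr(a,c)$ is $\equiv$-related to $\Tr(a,c)$ yet lies below $\Tr(a,c)_*$, which forces the cover $\Tr(a,c)_*\lessdot\Tr(a,c)$ to collapse. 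This part of your plan is plausibly completable as written.

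The genuine gap is in your direction $(\Rightarrow)$. To show that $\alpha_{(c,d)}\not\leq\alpha_{(a,b)}$ when $[a,b]\not\subseteq[c,d]$, it is not enough to ``track the covers collapsed by $\alpha_{(a,b)}$'' and assert that the propagation is obstructed: the congruence generated by collapsing one cover spreads through all quadrilaterals of the (very large) lattice $\Trs(L)$, and without a certificate there is no way to verify that this closure never reaches a cover labelled $(c,d)$. What is needed — and what the paper supplies — is an explicit congruence that contains $\con\bigl(\Tr(a,b)_*\lessdot\Tr(a,b)\bigr)$ but contracts only covers whose join label contains $[a,b]$. Concretely, the paper defines $\Rc_1\equiv_{(a,b)}\Rc_2$ iff every non-trivial relation in $(\Rc_1\vee\Rc_2)\cap\,^{\boxslash}(\Rc_1\wedge\Rc_2)$ contains the interval $[a,b]$, and then proves (this is the bulk of the work, a full proposition) that this is indeed a lattice congruence, compatible with joins and meets. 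Given that, if $\Con((c,d))\subseteq\Con((a,b))\subseteq\;\equiv_{(a,b)}$, then $\equiv_{(a,b)}$ contracts $\Tr(c,d)_*\lessdot\Tr(c,d)$, whose label is $(c,d)$, forcing $[a,b]\subseteq[c,d]$. Your proposal omits this construction entirely, and the separating congruence is precisely the missing idea; without it the contrapositive argument does not close.
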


Finally, let us return to the $G$-transfer systems. They coincide with the transfer systems on the lattice $\operatorname{Sub}(G)$ of the subgroups of $G$, when the group is abelian, but  in general, this is a proper \emp{sublattice}. We prove in \cref{sec:gts} that it inherits the properties of the lattice of transfer systems on $\operatorname{Sub}(G)$. 

\begin{theorem}\label{thm:gts}
Let $G$ be a finite group. Then the lattice of $G$-transfer systems is \emp{semidistributive}, \emp{trim} and \emp{congruence uniform}. 
\end{theorem}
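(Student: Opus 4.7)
The plan is to realize the lattice of $G$-transfer systems as the fixed-point sublattice of an action of $G$ on $\Trs(\operatorname{Sub}(G))$ by lattice automorphisms, and then to transfer the three properties from the ambient lattice by exploiting this extra symmetry. Conjugation by $g\in G$ gives a poset automorphism of $\operatorname{Sub}(G)$, hence a lattice automorphism of $\Trs(\operatorname{Sub}(G))$, and a transfer system is a $G$-transfer system precisely when it is fixed by this $G$-action. Since any lattice automorphism commutes with both $\wedge$ and $\vee$, the fixed-point set is closed under both operations, so the lattice of $G$-transfer systems is a sublattice of $\Trs(\operatorname{Sub}(G))$ whose meets and joins coincide with those of the ambient lattice.

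Semidistributivity is immediate from \cref{thm:semidistributive} applied to $\operatorname{Sub}(G)$: the defining implications involve only meets and joins, which agree between sublattice and ambient lattice, so they continue to hold for triples of $G$-transfer systems. For the remaining two properties, I would lift the join labelling of cover relations used earlier in the paper (as in \cref{lem:join label}). Given a cover $\Rc_1\lessdot \Rc_2$ in the lattice of $G$-transfer systems, the difference $\Rc_2\setminus \Rc_1$ is a nonempty $G$-invariant subset of non-trivial relations; by $G$-equivariance of the canonical join representation in the ambient semidistributive lattice, one should be able to show that this set is a single $G$-orbit in $\Rel^{*}(\operatorname{Sub}(G))$ of elevating relations. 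This produces a labelling of the cover relations in the fixed-point sublattice by $G$-orbits of non-trivial relations, which one then checks satisfies the appropriate analogue of \cref{lem:join label}.

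Congruence uniformity then follows by transcribing the proofs of \cref{thm:cong_unif} and \cref{thm:lat_of_cong} to this $G$-orbit labelling: the lattice of congruences of the $G$-transfer systems should be isomorphic to the lattice of ideals of the quotient poset $(\Rel^{*}(\operatorname{Sub}(G))/G,\subseteq)^{op}$. For trimness, I would establish extremality of the fixed-point sublattice---that its length equals both the number of join-irreducibles and the number of meet-irreducibles---and then invoke the result of \cite{TW} that an extremal semidistributive lattice is trim. The main obstacle will be precisely this orbit analysis: in the non-abelian case not every $G$-orbit of non-trivial relations arises as a cover label, and matching the count of join-irreducibles with that of meet-irreducibles requires a careful dual argument that in the full lattice was automatic from the explicit description via $(\Rel^{*}(L),\sqsubset)$ of \cref{pro:join-irreducible}.
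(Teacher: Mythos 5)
Your first half matches the paper exactly: conjugation makes $\Trs(\operatorname{Sub}(G))$ a $G$-lattice, the $G$-transfer systems are precisely the fixed points, hence a sublattice in which meets and joins agree with the ambient ones (this is \cref{lem:sublattice}, \cref{lem:gts} and \cref{lem:gfp}), and semidistributivity is inherited by any sublattice. The gap is in the other two properties, which you only sketch and whose key intermediate claim is false. You assert that for a cover $\Rc_1\lessdot\Rc_2$ the difference $\Rc_2\setminus\Rc_1$ is a single $G$-orbit of relations; this already fails for the trivial group, e.g.\ on the chain $0<1<2$ the cover below the full transfer system obtained by deleting $(0,1)$ must also delete $(0,2)$ (by \cref{lem:cov_relations} only the set of \emph{cover} relations removed is a singleton, not the whole difference). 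Beyond that, you yourself flag that the orbit analysis, the identification of join-irreducibles of the fixed-point lattice, and the count needed for extremality are unresolved obstacles, so the proposal does not actually establish trimness or congruence uniformity.

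The paper avoids all of this by citing two general structural theorems. For congruence uniformity it uses that a sublattice of a (finite) congruence uniform lattice is congruence uniform, which is \cite[Theorem 4.3]{day}; combined with \cref{lem:sublattice} and \cref{thm:cong_unif} this is immediate. For trimness, where sublattices do not suffice, it uses exactly the fixed-point structure you set up, but then invokes \cite[Theorem 4]{thomas_trim}: the fixed-point lattice of a group acting on a trim lattice by automorphisms is trim. If you want to keep your more hands-on route, you would need to correct the cover-difference claim (work with the unique deleted \emph{maximal} cover relation and its orbit), determine which orbits of relations actually index join-irreducible $G$-transfer systems, and prove extremality of the fixed-point lattice directly --- none of which is needed once the two cited theorems are available.
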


\paragraph{Acknowledgement}
The first author acknowledges the financial support provided by the China Scholarship Council (CSC) under grant number 202306860058. This article was finished when the second author was visiting the CCM Unam Morelia and he thanks Gerardo Raggi for his warm hospitality. Both of the authors would like to thank Hipolito Treffinger for valuable information. 

\section{Weak factorization systems and transfer systems}
In this short section we recall the definitions and basic results on weak factorization systems and transfer systems on a finite lattice $L$. 
\subsection{Weak factorization systems}\label{sec:wfs}

Let $\C$ be a category. A morphism $f$ of $\C$ is said to \defn{lift on the left} a morphism $g$ of $\C$ if for every commutative square
\[
\xymatrix{
A\ar[r]\ar[d]_{f} & X\ar[d]^{g}\\
B\ar[r] & Y
}
\]
there exists a lift $h : B\to X \in \C$ making the resulting triangles commute. In this case we write $f \boxslash g$. If $\Sc$ is a class of morphisms in $\C$, we use the following notation
\[
\Sc^{\boxslash} = \{ g\in \operatorname{Mor}(\C)\ |\ f\boxslash g \ \forall f\in \Sc\},
\]
\[
\,^{\boxslash}\Sc = \{ f\in \operatorname{Mor}(\C)\ |\ f\boxslash g \ \forall g\in \Sc\}.
\]

It is clear that $\Sc \subseteq \,^{\boxslash}\Tc$ if and only if $\Tc \subseteq \Sc^{\boxslash}$ and when it holds we write $\Sc \boxslash \Tc$.

\begin{definition}\label{def:wfs}
 A \defn{weak factorization system} on $\C$ is a pair $(\Lc,\Rc)$ of subclasses of the morphisms of $\C$ such that:
 \begin{enumerate}
 \item Every morphism $f\in \C$ can be factored as $f = pi$ where $i\in \Lc$ and $p\in \Rc$.
 \item $\Lc \boxslash \Rc$. 
 \item $\Lc$ and $\Rc$ are closed under retracts. 
 \end{enumerate}
 \end{definition}
 Here we say that a class of maps $\mathcal{M}$ is \defn{closed under retracts} if for every commutative diagram
 \[
 \xymatrix{
 A \ar[r]^{i}\ar[d]^{f} & X \ar[r]^{j} \ar[d]^{g} & A\ar[d]^{f}\\
 B \ar[r]^{k} & Y\ar[r]^{l} & B
 }
 \]
 such that $ji = id_A$, $lk = id_B$ and $f\in \mathcal{M}$, then $g\in \mathcal{M}$. Now, we have a useful alternative definition for the weak factorization systems. 
  
\begin{proposition}\label{pro:defts2}
A pair $(\Lc,\Rc)$ of subclasses of the morphisms of $\C$ is a weak factorization system for $\C$ if and only if
\begin{enumerate}
\item Every morphism $f\in \C$ can be factored as $f = pi$ where $i\in \Lc$ and $p\in \Rc$.
\item[2'.] $\Lc = \,^{\boxslash}R$.
\item[3'.] $\Rc = \Lc^{\boxslash}$. 
\end{enumerate}
\end{proposition}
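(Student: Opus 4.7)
The plan is to prove the equivalence by the standard \emph{retract argument}, which is classical in model category theory. The easy direction is to show that if $(\Lc,\Rc)$ satisfies $(1)$, $(2')$, $(3')$, then it is a weak factorization system in the sense of \cref{def:wfs}. Conditions $(2')$ and $(3')$ immediately imply $\Lc \boxslash \Rc$, so only the closure under retracts remains. For this I would invoke the general fact that for any class $\Sc$ of morphisms, the classes $\Sc^{\boxslash}$ and $\,^{\boxslash}\Sc$ are automatically closed under retracts: given a retract diagram as in the definition with $g \in \Sc^{\boxslash}$ (respectively $f \in \,^{\boxslash}\Sc$), one constructs the required lift for a square against $f$ (respectively $g$) by composing with the retract maps $i,j,k,l$ and invoking the lifting property of $g$ (respectively $f$); the verification that the triangles commute uses only $ji = \mathrm{id}_A$ and $lk = \mathrm{id}_B$.

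For the harder direction, assume $(\Lc,\Rc)$ is a weak factorization system. By definition $\Lc \boxslash \Rc$, which translates to $\Lc \subseteq \,^{\boxslash}\Rc$ and $\Rc \subseteq \Lc^{\boxslash}$. To upgrade these inclusions into equalities, I would run the retract argument. Take $f : A\to B$ in $\,^{\boxslash}\Rc$ and factor it using $(1)$ as $f = p\,i$ with $i\in \Lc$ and $p\in \Rc$. Then the commutative square
\[
\xymatrix{
A \ar[r]^{i}\ar[d]_{f} & C \ar[d]^{p}\\
B \ar[r]_{\mathrm{id}_B} & B
}
\]
admits a lift $h : B \to C$ because $f \in \,^{\boxslash}\Rc$ and $p\in \Rc$. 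The maps $(\mathrm{id}_A, i, \mathrm{id}_A)$ on the top row and $(h, p, \mathrm{id}_B)$ on the bottom row then exhibit $f$ as a retract of $i$ (in the arrow category), so by the closure under retracts of $\Lc$ we get $f \in \Lc$. The symmetric argument applied to $g \in \Lc^{\boxslash}$ yields $g \in \Rc$.

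There is no serious obstacle: everything rests on existence of factorizations in $(1)$ plus the retract closure, and the argument is entirely formal. The only thing that needs genuine care is bookkeeping in the retract diagram, namely checking that the maps constructed from $i$ and the lift $h$ satisfy the composition identities $ji = \mathrm{id}$ and $lk = \mathrm{id}$ required by the definition of a retract. Since the statement will be used only in the restricted setting where $\C$ is a finite lattice (so that all morphisms are relations and squares automatically commute), these verifications become essentially trivial, but the proof is written for general categories and the retract argument is the single nontrivial step.
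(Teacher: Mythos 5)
Your proof is correct and is exactly the classical retract argument that the paper itself invokes by citing \cite[Proposition~14.1.13]{may}: the easy direction uses that lifting classes $\Sc^{\boxslash}$ and $\,^{\boxslash}\Sc$ are closed under retracts, and the hard direction factors $f\in\,^{\boxslash}\Rc$ as $f=pi$, lifts against $p$, and exhibits $f$ as a retract of $i$. The only blemish is notational (the retract data should be read as the horizontal maps $\mathrm{id}_A,\mathrm{id}_A$ on top and $h,p$ on the bottom, with $i$ as the middle vertical arrow), but the diagram you describe is the right one and the verifications $hf=i$ and $ph=\mathrm{id}_B$ are exactly what the lift provides.
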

\begin{proof}
See for example \cite{adamek2002weak} or \cite[Proposition 14.1.13]{may}.
\end{proof}

For a given category $\C$ there are always at least \emp{two weak factorization systems}: $\Lc = \operatorname{Mor}(\C)$ and $\Rc = Iso(\C)$, the class of all isomorphisms of $\C$ and conversely $\Lc = Iso(\C)$ and $\Rc = \operatorname{Mor}(\C)$. Moreover, set theoretical issues aside, the class of weak factorization systems has a \emp{natural structure of partial order}: 
\begin{center} $(\Lc ,\Rc) \preceq (\mathcal{L}',\mathcal{R}')$ if $\Rc \subseteq \mathcal{R}'$ and $\mathcal{L}' \subseteq \Lc$. \end{center}

 Note that one inclusion implies the other. As discussed above this poset, denoted by $\Wfs(\mathcal{C})$, has always a greatest element and a smallest element. 
 
 A \defn{premodel structure} for $\C$ is an \emp{interval} $(\Lc,\Rc) \preceq (\Lc',\Rc')$ in the poset of weak factorization systems. It is a \defn{model structure} if the class $\mathcal{W} = \Rc \circ \Lc'$ satisfies the two out of three property: if two of $f,g, f\circ g$ are in $\W$, then the three are. In this case, the elements of $\mathcal{W}$ are called the \defn{weak equivalences}, the elements of $\Rc'$ the \defn{fibrations} and the elements of $\Lc$ the \defn{cofibrations}. 

Since the left part and right part of a weak factorization system determine each other, we need to understand which class of maps can appear as the left or the right part of a weak factorization system. We quote \cite[Definition 14.1.7]{may}
 
 \begin{definition}
 Let $\mathcal{M}$ be a class of morphisms of the category $\C$. Then $\mathcal{M}$ is \defn{left saturated} if the class $\mathcal{M}$
 \begin{enumerate}
 \item  is closed under \emp{compositions} and contains all the \emp{isomorphisms} of $\C$;
 \item is closed under \emp{retracts};
 \item is closed under arbitrary \emp{coproducts} (in the morphisms category);
 \item is closed under \emp{pushouts};
 \item is closed under \emp{transfinite compositions}.
 \end{enumerate}
 \end{definition}
Here by closed under pushouts we mean that for any \emp{pushout} diagram
\[
\xymatrix{
A\po \ar[r]\ar[d]^{i} & B\ar[d]^{f} \\
C \ar[r] & D
}
\]
if $i \in \mathcal{M}$, then $f\in \mathcal{M}$. There is a dual definition of \defn{right saturated} class of morphisms. 

\begin{proposition}
Let $\C$ be a category and $(\Lc,\Rc)$ be a weak factorization system for $\C$. Then $\Lc$ is \emp{left saturated} and $\Rc$ is \emp{right saturated}. 
\end{proposition}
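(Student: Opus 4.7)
The plan is to reduce everything to the characterization from \cref{pro:defts2}, which tells us that $\Lc = {}^{\boxslash}\Rc$ and $\Rc = \Lc^{\boxslash}$. Consequently, it is enough to show that for an \emph{arbitrary} class $\mathcal{S}$ of morphisms, the class ${}^{\boxslash}\mathcal{S}$ is left saturated (the right saturation of $\Rc$ follows by the dual argument). All five closure properties will then be obtained by direct diagram chasing, each time producing the required lift against an arbitrary $g \in \mathcal{S}$.

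First I would handle the easy items. Isomorphisms are clearly in ${}^{\boxslash}\mathcal{S}$ since the inverse composes with the bottom map to give a lift. Closure under composition: given $f_1, f_2 \in {}^{\boxslash}\mathcal{S}$ with target/source matching and a square with $f_2 f_1$ on the left and $g \in \mathcal{S}$ on the right, one uses the lifting property for $f_2$ to obtain a diagonal out of the intermediate object, then composes with $f_1$ on the domain side to rewrite the square as one in which $f_1$ lifts against $g$; the two diagonals assemble into a lift for $f_2 f_1$. Closure under retracts is built into the axioms of a weak factorization system, so there is nothing to do. For coproducts, given $\{f_i\} \subseteq {}^{\boxslash}\mathcal{S}$ and a square with $\coprod_i f_i$ on the left, the universal property of the coproduct breaks it into individual squares, each one of which receives a lift $h_i$; the copairing of the $h_i$ provides the required lift.

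For pushouts, given a pushout square in which $f \in {}^{\boxslash}\mathcal{S}$ and a square with the pushout arrow $f'$ on the left and $g \in \mathcal{S}$ on the right, one precomposes with the pushout leg to produce a square in which $f$ lifts against $g$; the resulting diagonal, together with the top map of the original square, satisfies the compatibility required to induce a map out of the pushout by its universal property, and this induced map is the desired lift. Transfinite composition is the most delicate item, since it requires transfinite induction: given a chain $X_0 \to X_1 \to \cdots \to X_\alpha \to \cdots$ whose successor arrows are in ${}^{\boxslash}\mathcal{S}$ and whose limit stages are colimits, and given a square with the transfinite composite on the left and $g$ on the right, one builds a compatible family of diagonals $h_\alpha : X_\alpha \to X$ by induction, using the lifting property at successor stages and the universal property of the colimit at limit stages; the colimit of the $h_\alpha$ is the desired lift.

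The main obstacle is bookkeeping at the transfinite stage: one has to verify that the diagonals constructed at each successor are compatible with the colimit cone defining the next limit ordinal, which is where care is required in formulating the inductive hypothesis. Once ${}^{\boxslash}\mathcal{S}$ is shown to be left saturated for any $\mathcal{S}$, applying this with $\mathcal{S} = \Rc$ gives the statement for $\Lc$, and the formally dual argument (reversing arrows and replacing pushouts by pullbacks, coproducts by products, etc.) applied to $\Lc^{\boxslash}$ gives the right saturation of $\Rc$. A reference such as \cite[Proposition 14.1.8]{may} can be cited for the detailed verification.
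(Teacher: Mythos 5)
Your proposal is correct and takes essentially the same route as the paper, whose entire proof is a citation of \cite[Proposition 14.1.8]{may}: the content of that reference is precisely the verification you sketch, namely that $\,^{\boxslash}\mathcal{S}$ is left saturated for any class $\mathcal{S}$, applied to $\Lc=\,^{\boxslash}\Rc$ via \cref{pro:defts2}, together with the dual statement. One small slip worth fixing: in the composition step for $f_2\circ f_1$, the diagonal out of the intermediate object is obtained from the lifting property of $f_1$ (against the composite bottom map), and the lifting property of $f_2$ is then used second to produce the final lift.
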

\begin{proof}
This is \cite[Proposition 14.1.8]{may}. 
\end{proof}

\subsection{Transfer systems}
We now consider the case where $\C$ is the category of a finite poset. Moreover since we want our categories to have limits and colimits we will assume that the poset is a \emp{lattice}. Concretely if $(L,\leq)$ is a finite lattice, it can be viewed as a category whose objects are the elements of $L$. There is a unique morphism from $x$ to $y$ if and only if $x\leq y$. Since a poset is reflexive, there is a unique morphism from $x$ to $x$, which is nothing but the identity morphism at $x$. Moreover, the transitivity of the relation implies that our construction is a category. The antisymmetry of the relation implies that the only isomorphisms in this category are the identity morphisms. In particular there are no non-trivial retracts and we can remove the third item in the definition of weak factorization systems. A set of morphisms in this category is nothing but a \emp{subrelation} of $(L,\leq)$. To say that such a class contains the \emp{isomorphisms} means that the relation is \emp{reflexive} and to say that it is closed under \emp{compositions} means that this is a \emp{transitive relation}. A subrelation of an antisymmetric relation is always antisymmetric. In other words, a composition closed set of morphisms that contains the isomorphism is a \emp{subposet} of $(L,\leq)$. The definition of \emp{right saturated} class of morphisms becomes:

\begin{definition}\label{def:ts}
Let $(L,\leq)$ be a lattice. A \defn{transfer system} $\lhd$ for $L$ is a relation of partial ordering on $L$ such that:
\begin{enumerate}
\item $i\ \lhd \ j$ implies $i \leq j$. 
\item $i\ \lhd \ k$ and $j \leq k$ implies $(i \land j)\ \lhd \ j$. 
\end{enumerate}
\end{definition}
Inclusion of relations naturally induces a poset structure on the set of transfer systems on $L$ and we denote this poset by $\Trs(L)$. 
\begin{theorem}\label{thm:transfer_system}
Let $(L,\leq)$ be a lattice. The map sending a transfer system $\mathcal{R}$ to $(\,^{\boxslash}\mathcal{R},\mathcal{R})$ is an isomorphism between the poset of transfer systems and the poset of weak factorization systems. 
\end{theorem}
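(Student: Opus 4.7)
The plan is to apply Proposition~\ref{pro:defts2}. Given a transfer system $\mathcal{R}$ on $L$, I need to produce a factorization of every morphism and prove $\mathcal{R}=({}^{\boxslash}\mathcal{R})^{\boxslash}$. For $a\leq b$ in $L$, I would consider the set $S=\{z\in L : a\leq z\leq b,\ z\lhd b\}$. It contains $b$ by reflexivity of $\lhd$, and it is closed under binary meets: for $z_1,z_2\in S$, the pullback axiom applied to $z_1\lhd b$ and $z_2\leq b$ yields $z_1\wedge z_2\lhd z_2$, and transitivity with $z_2\lhd b$ gives $z_1\wedge z_2\lhd b$. Finiteness of $L$ then implies that $c:=\bigwedge S$ lies in $S$, providing the factorization $a\leq c\lhd b$ with $c\to b\in \mathcal{R}$.

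The core technical step is to show $a\to c\in {}^{\boxslash}\mathcal{R}$. In a poset seen as a category, a lifting square amounts to data $a\leq x$, $x\lhd y$, $c\leq y$, and the sought lift is simply the inequality $c\leq x$. Applying the pullback axiom to $x\lhd y$ and $c\leq y$ gives $x\wedge c\lhd c$, and combining with $c\lhd b$ and transitivity yields $x\wedge c\lhd b$. Since $a\leq x\wedge c\leq b$, we have $x\wedge c\in S$, so $c\leq x\wedge c$ by the minimality of $c$, which forces $c\leq x$. For the equality $\mathcal{R}=({}^{\boxslash}\mathcal{R})^{\boxslash}$, only the inclusion $\supseteq$ requires work: if $c\leq b$ lies in $({}^{\boxslash}\mathcal{R})^{\boxslash}$, factor it as $c\leq e\lhd b$ by the above construction and apply the lifting property to the square
\[
\xymatrix{
c \ar@{=}[r]\ar[d] & c\ar[d]\\
e \ar[r] & b
}
\]
to obtain $e\leq c$, hence $e=c$ and therefore $c\lhd b$.

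Conversely, for any weak factorization system $(\mathcal{L},\mathcal{R})$ on $L$, the class $\mathcal{R}=\mathcal{L}^{\boxslash}$ is closed under composition, contains all identities and is closed under pullbacks, which in the lattice setting is precisely Definition~\ref{def:ts}. Proposition~\ref{pro:defts2} then ensures that $\mathcal{R}\mapsto({}^{\boxslash}\mathcal{R},\mathcal{R})$ and $(\mathcal{L},\mathcal{R})\mapsto\mathcal{R}$ are mutually inverse, and both are obviously order-preserving by the definition of the partial order on $\Wfs(L)$. I expect the main obstacle to be identifying the correct factorization: one must recognize that $S$ is meet-closed and choose $c$ as its meet rather than some more naive candidate; once this is in place, the lifting step reduces to a one-line application of the pullback axiom together with transitivity, and the remainder of the statement is formal.
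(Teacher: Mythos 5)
Your proof is correct. The paper itself does not argue this theorem at all --- it simply cites \cite[Theorem 4.13]{franchere2021self} --- so your proposal supplies a self-contained argument where the paper defers to the literature. The key construction, taking $c=\bigwedge\{z : a\leq z\leq b,\ z\lhd b\}$ and checking that this set is closed under binary meets via the pullback axiom plus transitivity, is exactly the right move: it yields the factorization $a\to c\to b$, and your verification that $a\to c\in{}^{\boxslash}\mathcal{R}$ (reducing the lifting square to $c\leq x$ and using minimality of $c$) is clean and complete. The retract argument for $({}^{\boxslash}\mathcal{R})^{\boxslash}\subseteq\mathcal{R}$ and the appeal to right-saturatedness of $\mathcal{L}^{\boxslash}$ for the converse are both standard and correctly executed. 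The only caveat worth flagging is that your argument uses finiteness of $L$ (to conclude $\bigwedge S\in S$), whereas the theorem as stated says only ``lattice''; since the whole paper works with finite lattices, and the infinite case would anyway require completeness hypotheses, this is harmless, but you should state the finiteness assumption explicitly if you intend the proof to stand alone.
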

\begin{proof}
This is \cite[Theorem 4.13]{franchere2021self}.

\end{proof}
\begin{remark}
It is clear that we can dualize this proof to obtain an isomorphism between the poset of weak factorization systems on $L$ and the poset of left saturated sets ordered by reverse inclusion. 
\end{remark}

\subsection{$G$-equivariant topology}

The notion of $N_\infty$-operad has been introduced in \cite{blumberg2015operadic} as an equivariant analogue of $E_\infty$-operads. As the authors are not expert in this area, we simply refer to the \cite[Section 3]{franchere2021self} for more informations about this notion and for the details. There is a notion of weak equivalences of $N_{\infty}$-operads, hence one can define the homotopy category of these operads. It turns out that this category is closely related to the notion of transfer systems.

\begin{definition}\label{def:gts}
Let $G$ be a finite group. We denote by $\operatorname{Sub}(G)$ the poset of the subgroups of $G$. Then a \defn{$G$-transfer system} is a subposet $\lhd$ of the inclusion on $\operatorname{Sub}(G)$ such that:
\begin{enumerate}
\item If $H \lhd K$ and $g\in G$, then $gHg^{-1} \lhd gKg^{-1}$.
\item If $H\lhd K$ and $M \leq K$, then $M\cap H \lhd M$.
\end{enumerate}
\end{definition}
The poset $\big(\operatorname{Sub}(G),\leq \big)$ is a lattice and by forgetting the first item in \cref{def:gts} we see that a $G$-transfer system is in particular a \emp{transfer system} on $\big(\operatorname{Sub}(G),\leq \big)$ in the sense of \cref{def:ts}. Moreover, when the group $G$ is abelian or when all the subgroups of $G$ are normal (e.g., the quaternion group $Q_8$) a $G$-transfer system is nothing but a transfer system.

\begin{theorem}
Let $G$ be a finite group. The homotopy category of $N_\infty$-operads is equivalent to the poset of $G$-transfer systems (viewed as a category). 
\end{theorem}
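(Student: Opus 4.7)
The plan is to decompose the statement into three well-documented steps that together bridge the operadic side to the combinatorial side, via the intermediate notion of an \emph{indexing system}. I would not reprove these steps from scratch, since the authors themselves disclaim expertise in equivariant operads; rather I would assemble them into a coherent argument and verify only the piece that is purely combinatorial.

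First, to every $N_\infty$-operad $\mathcal{O}$ one associates its \emph{admissible sets}: for each subgroup $H\leq G$, the collection $\underline{F}^{\mathcal{O}}(H)$ of finite $H$-sets $T$ for which $\mathcal{O}(T)$ is a universal space for a family of subgroups of $H\times\Sigma_{|T|}$ containing the graph subgroups. The key theorem of Blumberg–Hill (\cite{blumberg2015operadic}) states that this assignment is a complete homotopy invariant: two $N_\infty$-operads are weakly equivalent if and only if they have the same family of admissible sets, and a map of $N_\infty$-operads is a weak equivalence if and only if it induces an isomorphism on admissible sets. This already exhibits the homotopy category as a \emph{poset}, ordered by inclusion of admissible sets, and provides a faithful embedding into the poset of homotopical indexing systems.

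Second, I would invoke the \emph{realization theorem} (due independently to Bonventre–Pereira, Gutiérrez–White, and Rubin): every abstract indexing system, i.e.\ every sub-coefficient system of finite $G$-sets closed under restriction, self-induction, Cartesian products, and passage to subobjects, arises as $\underline{F}^{\mathcal{O}}$ for some $N_\infty$-operad $\mathcal{O}$. Combined with the first step, this upgrades the embedding to an equivalence between the homotopy category of $N_\infty$-operads and the poset of indexing systems. This realization step is the genuinely hard part of the statement and the only place where serious operadic input is needed; I would cite it rather than rederive it.

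Third, I would translate indexing systems into $G$-transfer systems, which is a purely combinatorial check and the only step I would carry out in detail. Given an indexing system $\underline{F}$, define $H\lhd K$ to hold precisely when $H\leq K$ and the transitive $K$-set $K/H$ belongs to $\underline{F}(K)$. Conjugation closure of $\underline{F}$ yields the first axiom of \cref{def:gts}, while closure under restriction (pulling back orbits along an inclusion $M\leq K$, which decomposes $\mathrm{Res}^K_M(K/H)$ into orbits of the form $M/(M\cap gHg^{-1})$) yields the second. Conversely, given a $G$-transfer system $\lhd$, declare a $K$-set $T$ admissible when every orbit $K/H$ of $T$ satisfies $H\lhd K$; the transfer-system axioms make this a bona fide indexing system, and the two constructions are inverse order isomorphisms. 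This is essentially \cite[Theorem~3.6]{franchere2021self} which the excerpt already cites.

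The main obstacle is the realization theorem in the second step: constructing an honest $N_\infty$-operad with a prescribed admissible-sets profile requires either a categorical model and equivariant nerve (Rubin) or a careful mixing of $E_\infty$-operads with universal spaces for prescribed families (Gutiérrez–White). The remaining two steps are, respectively, a classification by homotopy invariants that is now standard and a dictionary that amounts to unwinding the two closure axioms. Chaining the three bijections gives the claimed equivalence of categories, the poset structure on the $N_\infty$ side being forced to coincide with inclusion of transfer systems by construction.
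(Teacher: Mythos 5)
Your proposal is correct and in substance identical to the paper's treatment: the paper proves this theorem purely by citation, writing only that it ``involves the work of various groups of mathematicians'' and pointing to \cite[Theorem 3.6]{franchere2021self} and the references above it, which is precisely the chain you reconstruct (Blumberg--Hill's classification of $N_\infty$-operads by their admissible sets \cite{blumberg2015operadic}, the realization theorem of Bonventre--Pereira, Guti\'errez--White and Rubin, and the combinatorial dictionary between indexing systems and $G$-transfer systems). Your decomposition simply unpacks that citation, with the verified third step matching the dictionary already implicit in \cref{def:gts}, so there is no gap and no genuinely different route.
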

\begin{proof}
This theorem involves the work of various groups of mathematicians, see \cite[Theorem 3.6]{franchere2021self} and the references above it. 
\end{proof}

It follows that understanding the transfer systems on finite lattices is important for \emp{two reasons}: first they are the building blocks of the weak factorization systems, which is the first step towards the model structures on the finite lattices. Secondly, they classify the $G$-equivariants analogues of the $E_\infty$-operads up to equivalences. For the second case, we should focus on the lattices of subgroups of given finite groups. 
\section{Lattice of weak factorization systems}
In this section we investigate the properties of the poset of weak factorization systems on a finite lattice $L$. Since this poset is isomorphic to the poset of transfer systems and to the poset of left saturated sets, we will allow ourself to move between these three notions. 
\subsection{Basic properties}
We start by a reformulation of \cref{thm:transfer_system}. For a finite lattice $(L,\leq)$ we denote by $\Wfs(L)$ the poset of weak factorization systems and by $\Trs(L)$ the poset of transfer systems. We also denote by $\Lss(L)$ the poset of \emp{left saturated sets} ordered by \defn{reverse inclusion}. Here a left saturated set is nothing but a subposet of $L$ which is \emp{closed under pushouts}.
\begin{lemma}\label{lem:duality}
Let $(L,\leq)$ be a finite lattice. The map sending a left saturated set to its opposite, is an anti-isomorphism of posets between $\Lss(L)$ and $\Trs(L^{op})$.
\end{lemma}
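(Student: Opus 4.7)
The plan is to define $\phi : \Lss(L) \to \Trs(L^{op})$ by sending a left saturated set $\Lc$ to its opposite relation $\Lc^{op} := \{(b,a) : (a,b) \in \Lc\}$, viewed as a subrelation of $L^{op}$, and to verify that $\phi$ is well-defined, bijective, and order-reversing. Because taking opposites twice recovers the original relation, bijectivity will follow once both $\phi$ and the analogous map $\psi : \Trs(L^{op}) \to \Lss(L)$, $\Rc \mapsto \Rc^{op}$, are shown to land in the appropriate posets.

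For well-definedness of $\phi$, since $\Lc$ is a subposet of $L$ it is reflexive, transitive and antisymmetric, and these three properties transfer directly to $\Lc^{op}$ as a relation on the opposite underlying set. Moreover $(b,a) \in \Lc^{op}$ forces $a \leq b$ in $L$, i.e.\ $b \leq_{op} a$, so condition (1) of \cref{def:ts} is automatic. The nontrivial step is condition (2), and this is where the lattice duality does the real work.

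For condition (2), suppose $i \lhd k$ in $\Lc^{op}$ and $j \leq_{op} k$ in $L^{op}$. Translating back to $L$, this means $(k,i) \in \Lc$ and $k \leq j$. The desired conclusion $(i \land_{op} j) \lhd j$ becomes the statement $(j, i \lor j) \in \Lc$, since the meet in $L^{op}$ is the join in $L$. But this is precisely the content of closure under pushouts applied to
\[
\xymatrix{
k \ar[r] \ar[d] & i \ar[d]\\
j \ar[r] & i \lor j
}
\]
in $L$: the top arrow lies in $\Lc$, this square is a pushout (pushouts in a lattice are computed as joins), and $\Lc$ is pushout-closed, so the bottom arrow $(j, i \lor j)$ lies in $\Lc$ as required. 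Running the same argument in reverse establishes that $\psi$ sends transfer systems to left saturated sets: axiom (2) of \cref{def:ts} for $\Rc \in \Trs(L^{op})$ translates verbatim to pushout closure in $L$ for $\Rc^{op}$.

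Finally, the order reversal is immediate: if $\Lc_1 \preceq \Lc_2$ in $\Lss(L)$ then $\Lc_2 \subseteq \Lc_1$ by definition of the reverse-inclusion order, hence $\Lc_2^{op} \subseteq \Lc_1^{op}$, which means $\phi(\Lc_2) \preceq \phi(\Lc_1)$ in $\Trs(L^{op})$ since the latter is ordered by inclusion. The main (mild) obstacle throughout is simply the correct unravelling of $\leq_{op}$ and $\land_{op}$ in condition (2); once this dictionary is in place, the transfer system axiom in $L^{op}$ is literally the pushout closure axiom in $L$, and every remaining check is formal.
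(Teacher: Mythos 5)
Your proof is correct and is exactly the routine verification the paper leaves implicit (the lemma is stated without proof): the key observation that axiom (2) of \cref{def:ts} for $\Lc^{op}$ in $L^{op}$ unravels to pushout-closure of $\Lc$ in $L$ via $\land_{op}=\lor$ is precisely the intended content, and your pushout square and the order-reversal check are both accurate.
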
 

Moreover, if $\Rc$ is a transfer system, then $\Lc = \, ^{\boxslash}\Rc$ is a left saturated set. Similarly if $\Lc$ is a left saturated set, then $\Rc = \Lc ^{\boxslash}$ is a transfer system. By \cref{thm:transfer_system} and its dual, the pair $(^{\boxslash}\Rc, \Rc)$ is a weak factorization system so by \cref{pro:defts2} we have $\Rc = \big(^{\boxslash}\Rc\big)^{\boxslash}$. Similarly we have $\Lc = \, ^{\boxslash}\big(\Lc^{\boxslash})$. Finally, since the maps $\Rc \mapsto \, ^{\boxslash} \Rc$ and $\Lc \mapsto \Lc^{\boxslash}$ reverse the inclusions, the posets $\Trs(L)$ and $\Lss(L)$ are isomorphic. Hence we have:
\begin{proposition}\label{pro:anti-iso}
Let $(L,\leq)$ be a finite lattice. There is a commutative diagram of isomorphisms of posets:
\[
\xymatrix{
& \Wfs(L)\ar[dl]_{\pi_1} \ar[dr]^{\pi_2}&\\
\Lss(L)\ar@/^1pc/[rr]^{-^{\boxslash}} & & \Trs(L)\ar@/^1pc/[ll]^{\,^{\boxslash}-}
}
\]
where $\pi_i$ denotes the canonical projections for $i=1,2$.
\end{proposition}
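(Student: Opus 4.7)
The plan is to reduce everything to \cref{thm:transfer_system}, which already gives that $\pi_2 : \Wfs(L) \to \Trs(L)$ is an isomorphism of posets with inverse $\Rc \mapsto (\,^{\boxslash}\Rc,\Rc)$. The paragraph preceding the proposition essentially lays out the argument; my task is to organize it into three short formal steps.

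First I would establish that $\pi_1 : \Wfs(L) \to \Lss(L)$ is an isomorphism. The cleanest path is to rerun the argument of \cref{thm:transfer_system} in its dual form, replacing transfer systems and pullbacks by left saturated sets and pushouts throughout. Alternatively, one can combine \cref{thm:transfer_system} applied to the opposite lattice $L^{op}$ with \cref{lem:duality}, after noting that the assignment $(\Lc,\Rc)\mapsto (\Rc^{op},\Lc^{op})$ is an anti-isomorphism $\Wfs(L) \to \Wfs(L^{op})$, since swapping the two classes of a weak factorization system reverses the partial order. Either way, the inverse of $\pi_1$ is $\Lc \mapsto (\Lc,\Lc^{\boxslash})$.

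Next I would verify commutativity of the two triangles. For any $(\Lc,\Rc)\in \Wfs(L)$, the equivalent definition given in \cref{pro:defts2} asserts $\Rc = \Lc^{\boxslash}$ and $\Lc = \,^{\boxslash}\Rc$. This immediately gives $\bigl(\pi_1(\Lc,\Rc)\bigr)^{\boxslash} = \Lc^{\boxslash} = \Rc = \pi_2(\Lc,\Rc)$ and $\,^{\boxslash}\pi_2(\Lc,\Rc) = \,^{\boxslash}\Rc = \Lc = \pi_1(\Lc,\Rc)$, which is exactly what the two triangles require.

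Finally, since $\pi_1$ and $\pi_2$ are bijections and the triangles commute, the horizontal arrows are forced to be mutually inverse bijections between $\Trs(L)$ and $\Lss(L)$. That they are in fact isomorphisms of posets is pure bookkeeping: both $\Rc\mapsto \,^{\boxslash}\Rc$ and $\Lc \mapsto \Lc^{\boxslash}$ reverse set-theoretic inclusions, which matches the convention that $\Lss(L)$ is ordered by reverse inclusion while $\Trs(L)$ is ordered by inclusion. There is no genuine obstacle here — the proposition is a repackaging of \cref{thm:transfer_system} and \cref{pro:defts2}; the only mild subtlety is keeping the two opposite orderings straight when comparing $\Lss(L)$ and $\Trs(L)$.
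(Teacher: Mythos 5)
Your proposal is correct and follows essentially the same route as the paper: the paper also derives $\pi_1$ from the dual of \cref{thm:transfer_system}, obtains the identities $\Rc = \Lc^{\boxslash}$ and $\Lc = {}^{\boxslash}\Rc$ from \cref{pro:defts2} to get commutativity, and concludes that the horizontal maps are poset isomorphisms because they reverse inclusions while $\Lss(L)$ carries the reverse-inclusion order.
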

Our next step is to recall that the poset of weak factorization systems is a lattice (see \cite[Proposition 3.7]{franchere2021self}). Since the intersection of two transfer systems is a transfer system and there are only finitely many transfer systems this is clear. However we will need a good description of the join in this poset. 

When $(L,\leq)$ is a finite lattice, we denote by $\Rel(L) = \{ (a,b) \in L^{2}\ |\ a\leq b\}$ the set of relations of $L$. We also denote by $\Delta(L) = \{(x,x) \ |\ x\in L\}$, the diagonal of $L$. We will use the notation $a\leq b$ or $a\to b$ to indicate that $(a,b)\in \Rel(L)$. Moreover we denote by $\Rel^{*}(L) = \Rel(L) \setminus \Delta(L)$ the set of non-trivial relations in $L$. 

 If $S\subseteq \Rel(L)$ we denote by $S^{pb} = \{ (x\land w, w) \in L^{2}\ |\ w\leq y \hbox{ and } (x, y) \in S\}$, the \emp{closure of $S$ under pullbacks}. Similarly, we denote by $S^{po} = \{(w,y\lor w)\in L^{2}\ |\ x\leq w \hbox{ and } (x, y) \in S\}$, the \emp{closure of $S$ under pushouts}. We also denote by $\Tr(S)= (S^{pb})^{tc}$ the set obtained by taking the reflexive and transitive closure of the pullback closure of $S$. 

\begin{lemma}\label{lem:smallest_ts}
Let $(L,\leq)$ be a finite lattice. Then
\begin{enumerate}
\item Let $S\subseteq \Rel(L)$, then $\Tr(S)$ is the \emp{smallest transfer system} containing $S$. 
\item If $S = \{ (a,b) \}$ for $(a, b) \in \Rel^*(L)$, then
\[ \Tr(S) = S^{pb}\cup \Delta(L) = \{ (a\land c,c) | c\leq b\} \cup \Delta(L).\] 
\item Let $\Rc_1$ and $\Rc_2$ be two transfer systems for $L$. Then 
\[ \Tr(\Rc_1 \cup \Rc_2 ) = (\Rc_1 \cup \Rc_2)^{tc}.\] 
\end{enumerate}
\end{lemma}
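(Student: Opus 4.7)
My plan is to handle the three parts of the lemma in order, with part (1) carrying the conceptual weight. Throughout I will use the description $\Tr(S) = (S^{pb})^{tc}$ given just above the lemma and the transfer system axioms from \cref{def:ts}. For (1), I first observe that $\Tr(S)$ is contained in $\Rel(L)$, reflexive and transitive by construction, and antisymmetric (inheriting this from $\leq$). Thus it is a partial order and the only remaining transfer system axiom to check is the pullback axiom: given $i \lhd k$ in $\Tr(S)$ and $j \leq k$, I must show $i \wedge j \lhd j$. My strategy is to unwind the transitive closure. A witness for $i \lhd k$ is a chain $i = x_0 \leq x_1 \leq \cdots \leq x_n = k$ with each $(x_{r-1}, x_r) \in S^{pb}$. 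I define $j_n := j$ and $j_{r-1} := x_{r-1} \wedge j_r$, and check by descending induction on $r$ that $j_r \leq x_r$ and that $(j_{r-1}, j_r)$ is a pullback of the same element of $S$ that $(x_{r-1}, x_r)$ is; in particular $(j_{r-1}, j_r) \in S^{pb}$. Since $x_0 \leq \cdots \leq x_{n-1}$ collapse under meet, one gets $j_0 = i \wedge j$, and the chain $j_0, j_1, \ldots, j_n$ yields $i \wedge j \lhd j$. The minimality claim is then immediate: any transfer system $\Rc$ containing $S$ is closed under pullbacks and under transitivity, so contains $\Tr(S)$.

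For (2), I would simply unfold the definition. When $S = \{(a,b)\}$, pulling back $(a,b)$ along each $c \leq b$ yields $(a \wedge c, c)$, so $S^{pb} = \{(a \wedge c, c) : c \leq b\}$, which includes $(a,b) = (a \wedge b, b)$ itself. It remains to verify that $S^{pb} \cup \Delta(L)$ is already transitively closed. Compositions involving an identity return the other arrow, while the composition of two non-identity arrows $(a \wedge c_1, c_1)$ and $(a \wedge c_2, c_2)$ would require $c_1 = a \wedge c_2$, forcing $c_1 \leq a$ and hence $a \wedge c_1 = c_1$ — so the first arrow is in fact an identity. No new relations appear, and $\Tr(S) = S^{pb} \cup \Delta(L)$.

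For (3), the observation is that each $\Rc_i$ is already pullback-closed by the second axiom of \cref{def:ts}, so $(\Rc_1 \cup \Rc_2)^{pb} \subseteq \Rc_1 \cup \Rc_2$: any pullback $(x \wedge w, w)$ of some $(x,y) \in \Rc_i$ lies in $\Rc_i$. Consequently $\Tr(\Rc_1 \cup \Rc_2) = ((\Rc_1 \cup \Rc_2)^{pb})^{tc} = (\Rc_1 \cup \Rc_2)^{tc}$. The only real obstacle in the whole lemma is the inductive construction in (1), where one must maintain $j_r \leq x_r$ at each step so that the next pullback genuinely makes sense; this follows cleanly from $j_{r-1} = x_{r-1} \wedge j_r \leq x_{r-1}$ combined with the inductive hypothesis, but it is the place where the bookkeeping deserves attention. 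Everything else reduces to unfolding definitions.
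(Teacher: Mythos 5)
Your proof is correct and follows essentially the same route as the paper's: verify that $(S^{pb})^{tc}$ is a transfer system and is contained in any transfer system containing $S$, then check parts (2) and (3) by the same composability and pullback-closure observations. The only difference is that you spell out explicitly, via the descending chain $j_{r-1}=x_{r-1}\wedge j_r$, the pullback-closure of the transitive closure, which the paper dispatches with an appeal to the pasting law for pullbacks.
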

\begin{proof}
\begin{enumerate}
\item This is \cite[Theorem A.2]{rubin}. Since it is easy, we give the argument. For every $x\leq y \in S$, note that $(x,y) = (x\land y, y)$, we have $(x,y)\in S^{pb}$. That is, $S \subseteq \Tr(S)$. By the construction of $\Tr(S)$, it is clear that it is reflexive and transitive. Since $\Tr(S)$ is a subrelation of $(L,\leq)$, it is also antisymmetric. Moreover, it follows from the \emp{pasting laws for pullbacks} (or elementary manipulations of meets in a lattice) that $\Tr(S)$ is closed under pullbacks.  Hence, we proved $\Tr(S)$  is a transfer system containing $S$. If $\Rc$ is any transfer system containing $S$, then it is closed under pullbacks. Particularly, it contains all pullbacks of $S$, that is, $S^{pb} \subseteq \Rc$. Since $\Rc$, is reflexive and transitive, we get $\Tr(S) \subseteq \Rc$. 
\item The elements of $S^{pb}$ are relations of the form $a\land c_{i} \to c_{i}$ with $c_{i}\leq b$, so we can compose them only when $c_i = a\land c_j$ for some $c_i, c_j \leq b$. Note that 
\[
a \land c_i = a \land (a \land c_j) = a \land c_j = c_i.
\]
That is, $a\land c_i \to c_i$ is trivial. 
\item This is because $(\Rc_1 \cup \Rc_2)^{pb} = \Rc_1 \cup \Rc_2$. 
\end{enumerate}
\end{proof}
There is of course the dual result for the \defn{smallest left saturated set} containing $S$, which we denote by $\Ls(S)$. We obtain the following result. 
\begin{proposition}\label{pro:wfs_lattice}
Let $(L,\leq)$ be a finite lattice. Then $\Wfs(L)$ is a lattice and if $(\Lc_1,\Rc_1)$ and $(\Lc_2,\Rc_2)$ are two weak factorization systems, then
\[
(\Lc_1,\Rc_1) \land (\Lc_2,\Rc_2) = (\Ls(\Lc_1 \cup \Lc_2), \Rc_1 \cap \Rc_2),
\]
and 
\[
(\Lc_1,\Rc_1) \lor (\Lc_2,\Rc_2) = (\Lc_1 \cap \Lc_2, \Tr(\Rc_1\cup \Rc_2) ). 
\]
\end{proposition}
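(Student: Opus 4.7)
The plan is to first establish that $\Trs(L)$ is a lattice, with intersection as meet and $\Tr(\cdot\cup\cdot)$ as join, and then to transport this structure through the isomorphisms of \cref{pro:anti-iso} to $\Wfs(L)$. The only subtle point is that $\Lss(L)$ is ordered by \emph{reverse} inclusion, so one has to be careful when reading off the formula for the left component.

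First I would verify that $\Trs(L)$ is closed under finite intersections: both conditions of \cref{def:ts} pass to an intersection, since if $(i,k)\in \Rc_1 \cap \Rc_2$ and $j\leq k$ then $(i\wedge j, j) \in \Rc_\ell$ for $\ell=1,2$ individually, hence for the intersection. Together with the fact that the full relation $\Rel(L)$ is a transfer system (the top element), this shows $\Trs(L)$ is a finite meet-semilattice with a top, and therefore a lattice. By \cref{lem:smallest_ts}(1), the join of $\Rc_1,\Rc_2$ is the smallest transfer system containing $\Rc_1\cup\Rc_2$, namely $\Tr(\Rc_1 \cup \Rc_2)$.

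Next, by \cref{pro:anti-iso} the projection $\pi_2 : \Wfs(L) \to \Trs(L)$ is an isomorphism of posets, so $\Wfs(L)$ inherits the lattice structure. The right component of the meet is thus $\Rc_1\cap \Rc_2$ and the right component of the join is $\Tr(\Rc_1 \cup \Rc_2)$. It remains to identify the corresponding left components, which by \cref{pro:anti-iso} must equal $^{\boxslash}(\Rc_1\cap \Rc_2)$ and $^{\boxslash}\Tr(\Rc_1\cup\Rc_2)$ respectively.

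I would do this by reading the computation off from $\pi_1 : \Wfs(L)\cong \Lss(L)$ instead. Because $\Lss(L)$ uses reverse inclusion, the meet in $\Lss(L)$ of $\Lc_1,\Lc_2$ is the largest (in the reverse-inclusion order) left saturated set below both, i.e. the smallest (by set inclusion) left saturated set \emph{containing} $\Lc_1\cup\Lc_2$; this is by definition $\Ls(\Lc_1\cup\Lc_2)$. Dually, the join in $\Lss(L)$ is the largest left saturated set contained in both $\Lc_1$ and $\Lc_2$, which is simply $\Lc_1\cap\Lc_2$ once we know that the intersection of left saturated sets is again left saturated; this closure property is the dual of the one used in Step 1 and follows from the anti-isomorphism $\Lss(L)\cong \Trs(L^{op})^{op}$ of \cref{lem:duality}. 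Assembling these yields the two displayed formulas. There is no real obstacle in the argument: once the three-way isomorphism of \cref{pro:anti-iso} is in hand, the statement is essentially bookkeeping, and the only point demanding minor care is consistently handling the reverse-inclusion convention on $\Lss(L)$.
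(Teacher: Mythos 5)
Your proposal is correct and follows essentially the same route as the paper: the paper also deduces the lattice structure from the facts that intersections of transfer systems are transfer systems and that there are finitely many of them, uses \cref{lem:smallest_ts} for the join, and dualizes through \cref{pro:anti-iso} and \cref{lem:duality} for the left components. Your careful handling of the reverse-inclusion convention on $\Lss(L)$ is exactly the bookkeeping the paper leaves implicit.
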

\subsection{Cover relations and join-irreducible elements}

We recall that an element $j\in L$ is \defn{join-irreducible} if for every finite subset $A\subseteq L$ such that $j = \lor A$ we have $j \in A$.  Equivalently $j$ is join-irreducible if and only if it covers a unique element $j_*$. Our next task is to determine the join-irreducibles of the lattice of transfer systems. To do so, we start by a discussion on the cover relations in the poset of transfer systems. We use the notation $\Rc_1 \lessdot \Rc_2$ if $\Rc_1 \subset \Rc_2$ is a cover relation, that is $\Rc_1\neq \Rc_2$ and if $\Rc_1 \subseteq \Rc \subseteq \Rc_2$ we have $\Rc = \Rc_1$ or $\Rc = \Rc_2$. 

If $(a,b) \in \Rel^{*}(L)$, then $\Tr(a,b)$ is a transfer system on $L$. Moreover, we have $\Tr(a,b)\subseteq \Tr(c,d)$ if and only if $(a,b) \in \Tr(c,d)$. By \cref{lem:smallest_ts}, this is equivalent to the existence of a pullback diagram:
\[
\xymatrix{
b\land c = a \pb \ar[r]\ar[d] & b \ar[d] \\
c \ar[r] & d 
}
\]
It follows that the map $(a,b) \mapsto \Tr(a,b)$ is an injection from $\Rel^{*}(L)$ to $\Trs(L)$. The inclusion of transfer systems induces a poset relation on $\Rel^{*}(L)$ that we denote by $(a,b) \sqsubset (c,d)$. 

Let $\Rc$ be a transfer system on $L$. We denote by $\Rc^{c}$ the set of all \emp {cover relations} in $\Rc$. Moreover, we say that $(a,b)\in \Rc^c$ is \defn {maximal} in $\Rc^c$, if for any $(c, d) \in \Rc^c$ such that $(a,b) \sqsubset (c,d)$, we have $(a,b) =(c,d)$.

\begin{lemma}\label{lem:lower_ts}
Let $\Rc$ be a transfer system on $L$ and $(a,b)\in \Rc^c$. We denote by $\Rc_{(a,b)} = \Rc \setminus \{ (u,v)\in \Rc \ |\ u\neq v \hbox{ and } (a,b) \sqsubset (u,v) \}.$ Then $\Rc_{(a,b)}$ is a \emp{transfer system} on $L$. Moreover, if $(a,b)$ is maximal in $\Rc^c$, then $\Rc_{(a,b)} \lessdot \Rc$.
\end{lemma}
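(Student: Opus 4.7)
The plan is to verify the axioms of \cref{def:ts} directly for $\Rc_{(a,b)}$, and then to upgrade this to the cover assertion by a chain-decomposition argument using maximality. Reflexivity, antisymmetry, and item (1) are immediate: $\Rc_{(a,b)}$ is obtained from $\Rc$ by removing only nontrivial pairs, so $\Delta(L)\subseteq\Rc_{(a,b)}\subseteq\Rc$. For the pullback axiom (item (2)), I will take $(i,k)\in\Rc_{(a,b)}$ and $j\leq k$; then $(i\land j,j)\in\Rc$, and if this were removed nontrivially I would have $a=(i\land j)\land b$, $b\leq j$, and $i\land j\neq j$. Using $b\leq j$ gives $a=i\land b$, and $b\leq j\leq k$ gives $(a,b)\sqsubset(i,k)$; since $(i,k)\in\Rc_{(a,b)}$ this forces $i=k$, whence $j\leq i$ and $i\land j=j$, contradicting $i\land j\neq j$.

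The main obstacle is transitivity, and this is precisely where the cover-relation hypothesis on $(a,b)$ enters. Given $(x,y),(y,z)\in\Rc_{(a,b)}$ with $x\neq z$, I will assume for contradiction $(a,b)\sqsubset(x,z)$, i.e., $a=x\land b$ and $b\leq z$, and split on whether $b\leq y$. If $b\leq y$, then $(a,b)\sqsubset(x,y)$ is immediate, contradicting $(x,y)\in\Rc_{(a,b)}$ (unless $x=y$, in which case $(x,z)=(y,z)\in\Rc_{(a,b)}$). Otherwise $b':=y\land b$ satisfies $b'<b$; applying axiom (2) to $(y,z)$ with $j=b$ yields $(b',b)\in\Rc$, and applying it to $(x,y)$ with $j=b'$ yields $(a,b')\in\Rc$ via the computation $x\land b'=x\land y\land b=x\land b=a$. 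Now $(a,b'),(b',b)\in\Rc$ with $b'<b$ forces $a=b'$, for otherwise $(a,b)$ would factor nontrivially through $b'$ in $\Rc$, contradicting $(a,b)\in\Rc^c$. But $a=b'=y\land b$ now gives $(a,b)\sqsubset(y,z)$, contradicting $(y,z)\in\Rc_{(a,b)}$ (unless $y=z$, in which case $(x,z)=(x,y)\in\Rc_{(a,b)}$).

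For the cover assertion, assume $(a,b)$ is maximal in $\Rc^c$, and let $\Rc'$ be any transfer system with $\Rc_{(a,b)}\subsetneq\Rc'\subseteq\Rc$. I will pick $(u,v)\in\Rc'\setminus\Rc_{(a,b)}$: then $a=u\land b$ and $b\leq v$, so the pullback axiom applied to $(u,v)\in\Rc'$ with $j=b$ yields $(a,b)=(u\land b,b)\in\Rc'$. Now for any $(c,d)\in\Rc$, I fix a chain of covers $c=x_0\lessdot\cdots\lessdot x_n=d$ in $\Rc$. Each $(x_i,x_{i+1})\in\Rc^c$ either lies in $\Rc_{(a,b)}\subseteq\Rc'$, or satisfies $(a,b)\sqsubset(x_i,x_{i+1})$, in which case the maximality of $(a,b)$ in $\Rc^c$ forces $(x_i,x_{i+1})=(a,b)\in\Rc'$. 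Transitivity of $\Rc'$ then gives $(c,d)\in\Rc'$, so $\Rc'=\Rc$, establishing $\Rc_{(a,b)}\lessdot\Rc$.
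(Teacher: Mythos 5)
Your proof is correct and follows essentially the same route as the paper's: you verify the transfer-system axioms directly, using the key element $b'=y\land b$ (the paper's $v\land b$) together with the fact that $(a,b)$ is a cover relation in $\Rc$ to handle transitivity, and you recover $(a,b)\in\Rc'$ via pullback-closure and conclude by maximality exactly as the paper does. The only cosmetic differences are that you phrase the pullback pastings as meet computations and make explicit the chain decomposition and the trivial edge cases $x=y$, $y=z$ (plus one could note $\Rc_{(a,b)}\neq\Rc$ since $(a,b)$ itself is removed), which the paper leaves implicit.
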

\begin{proof}
The relation $\Rc_{(a,b)}$ is reflexive and antisymmetric. Let $(u,v)\in \Rc_{(a,b)}$ and $(v,w)\in \Rc_{(a,b)}$. If $(u,w) \notin \Rc_{(a,b)}$, we have a pullback diagram
\[
\xymatrix{
u\land b = a \pb \ar[rr]\ar[d] && b\ar[d]\\
u \ar[r] & v \ar[r] & w
}
\]
which we complete into a diagram where all squares are pullbacks
\[
\xymatrix{
u\land b = a\pb \ar[r]\ar[d] & v \land b\pb \ar[d]\ar[r]& b\ar[d]\\
u \ar[r]_{\in \Rc} & v \ar[r]_{\in \Rc} & w
}
\]
The arrows in the bottom are in $\Rc$ and since $\Rc$ is closed under pullback, we have $(a,v\land b) \in \Rc$ and $(v\land b, b) \in \Rc$. Since $(a,b)$ is a cover relation in $\Rc$, we have $a = v \land b$ or $v\land b = b$. In the first case we obtain $(v,w)\notin \Rc_{(a,b)}$ and in the second case we obtain $(u,v)\notin \Rc_{(a,b)}$, which are both contradictions.

We also need to prove that $\Rc_{(a,b)}$ is closed under pullback, so we consider $(u,v)\in \Rc_{(a,b)}$ and $(u\land w,w)$ an element obtained by taking the pullback of $(u,v)$ with $w\leq v$. If $(u\land w,w) \notin \Rc_{(a,b)}$ we have $(a,b) \sqsubset (u\land w,w)$, and the following commutative diagram where the two small squares are pullbacks:
\[
\xymatrix{
a\pb \ar[r] \ar[d] & u \land w\pb \ar[r]\ar[d] & u \ar[d] \\
b \ar[r] & w \ar[r] & v
}
\]
The pasting law for pullbacks implies that $(u,v)\notin\Rc_{(a,b)}$.

It remains to prove that when $(a,b)$ is maximal in $\Rc^c$, the inclusion $\Rc_{(a,b)} \subseteq \Rc$ is a cover relation. Let $\Rc^{'}$ be any transfer system on $L$ such that $\Rc_{(a,b)} \subsetneq \Rc^{'} \subseteq \Rc$. Note that we assume that $\Rc_{(a,b)} \neq \Rc^{'}$, so there exists $(x,y)\in \Rc^{'}$ such that $(x,y)\notin \Rc_{(a,b)}$. By the definition of $\Rc_{(a,b)}$ we have $(a,b) \sqsubset (x,y)$. Since $\Rc^{'}$ is closed under pullbacks, we have $(a,b)\in \Rc^{'}$. 

Let $(u,v)\in \Rc^{c}$, then if $(a,b) \not\sqsubset (u,v)$, then $(u,v)\in \Rc_{(a,b)} \subseteq \Rc'$. If $(a,b) \sqsubseteq (u,v)$, then $(u,v) = (a,b) \in \Rc'$. So all the cover relations of $\Rc$ are in $\Rc'$ hence $\Rc' = \Rc$. 

\end{proof}

\begin{lemma}\label{lem:step1}
Let $\Rc_{1} \subseteq \Rc$ be two transfer systems on $L$. Then we have
\begin{enumerate}
\item  $\Rc_{1} =\Rc$ if and only if $\Rc \cap \,^{\boxslash} \Rc_{1} = \Delta(L)$.
\item  If $(a,b)\in \Rc^{c} \setminus \Rc_{1}$, then $(a,b) \in \Rc \cap \,^{\boxslash} \Rc_{1}$. 

\end{enumerate}
\end{lemma}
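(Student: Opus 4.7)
The plan is to prove part (2) first and then deduce part (1) from it. A preliminary observation I will use repeatedly is that $\Delta(L) \subseteq \Rc \cap \,^{\boxslash}\Rc_1$ always holds, since in a poset-category the identity $(x,x)$ trivially lifts against any morphism.

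For the forward direction of (1), if $\Rc_1 = \Rc$, then by \cref{thm:transfer_system} the set $\,^{\boxslash}\Rc_1$ is precisely the left class $\Lc$ of the weak factorization system corresponding to $\Rc$. Any $(x,y) \in \Lc \cap \Rc$ lifts against itself, producing a morphism $y \to x$; combined with $x \leq y$, antisymmetry forces $x = y$. Hence $\Rc \cap \,^{\boxslash}\Rc_1 = \Delta(L)$.

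For part (2), let $(a,b) \in \Rc^c \setminus \Rc_1$ and take an arbitrary commutative square
\[
\xymatrix{
a \ar[r] \ar[d] & x \ar[d] \\
b \ar[r] & y
}
\]
with $(x,y) \in \Rc_1$. I introduce the meet $w = x \wedge b$, which satisfies $a \leq w \leq b$ because $a \leq x$ and $a \leq b$. Two pullback-closure arguments then apply: pulling $(x,y) \in \Rc_1$ back along $b \leq y$ gives $(w, b) \in \Rc_1 \subseteq \Rc$, while pulling $(a,b) \in \Rc$ back along $w \leq b$ gives $(a, w) \in \Rc$. Consequently $(a,b)$ factors inside $\Rc$ as $a \to w \to b$, and the hypothesis $(a,b) \in \Rc^c$ forces $w = a$ or $w = b$. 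If $w = b$, then $b \leq x$ and the inequality $b \to x$ supplies the desired lift. If $w = a$, the pullback of $(x,y)$ along $b \leq y$ equals $(a,b)$, so pullback closure of $\Rc_1$ places $(a,b)$ in $\Rc_1$, contradicting our assumption.

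For the backward direction of (1), I argue the contrapositive. If $\Rc_1 \subsetneq \Rc$, fix $(x,y) \in \Rc \setminus \Rc_1$, necessarily with $x < y$, and choose a maximal chain $x = x_0 < x_1 < \cdots < x_n = y$ inside $\Rc$. Each $(x_i, x_{i+1})$ lies in $\Rc^c$, and if all of them belonged to $\Rc_1$, transitivity would force $(x,y) \in \Rc_1$. So some $(x_i, x_{i+1}) \in \Rc^c \setminus \Rc_1$, and part (2) places it in $\Rc \cap \,^{\boxslash}\Rc_1$, which is therefore strictly larger than $\Delta(L)$. I expect the main obstacle to be the cover-relation dichotomy in part (2): showing that the case $w = a$ genuinely contradicts $(a,b) \notin \Rc_1$ requires recognizing $(a,b)$ itself as a pullback of $(x,y)$, which is the essential use of $(a,b)$ being a \emph{cover} rather than an arbitrary element of $\Rc \setminus \Rc_1$.
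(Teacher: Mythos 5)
Your proof is correct. Part (2) is essentially the paper's argument verbatim: your $w = x\wedge b$ is the paper's $b\wedge u$, and the dichotomy forced by $(a,b)\in\Rc^c$ is resolved the same way in both cases. The easy implication of (1) (self-lifting plus antisymmetry) also matches. The one place you genuinely diverge is the implication $\Rc\cap\,^{\boxslash}\Rc_1=\Delta(L)\Rightarrow\Rc_1=\Rc$: the paper takes an arbitrary $(a,b)\in\Rc$, factors it as $a\to c\to b$ using the weak factorization system $(\,^{\boxslash}\Rc_1,\Rc_1)$ furnished by \cref{thm:transfer_system}, and pulls back to land $(a,c)$ in $\Rc\cap\,^{\boxslash}\Rc_1=\Delta(L)$, concluding $(a,b)\in\Rc_1$ directly. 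You instead argue the contrapositive: decompose an element of $\Rc\setminus\Rc_1$ along a maximal chain into cover relations of $\Rc$, observe that one of them must miss $\Rc_1$, and feed it into part (2). Your route has the virtue of making (1) a formal consequence of (2) plus finiteness, without invoking the factorization axiom at all; the paper's route is shorter and does not need the cover-relation machinery, but leans on the nontrivial fact that $(\,^{\boxslash}\Rc_1,\Rc_1)$ satisfies the factorization property. Both are sound; your chain-decomposition idea in fact reappears in the paper's own proof of \cref{lem:cov_relations}, so it fits the surrounding development naturally.
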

\begin{proof}
\begin{enumerate}
\item It is clear that $\Rc \cap \,^{\boxslash} \Rc = \Delta(L)$. Conversely let $(a,b) \in \Rc$. Since $(\,^{\boxslash}\Rc_{1},\Rc_{1})$ is a weak factorization system on $L$ we have a factorization $a \to c \to b$ with $(a,c)\in \,^{\boxslash}\Rc_{1}$ and $(c,b) \in \Rc_1$. Since $\Rc$ is a transfer system, the pullback diagram
\[
\xymatrix{
a \pb \ar[r] \ar[d] & c \ar[d]\\
a \ar[r] &b
}
\]
implies that $(a,c) \in \,^{\boxslash}\Rc_{1} \cap \Rc = \Delta(L)$, so we have $a=c$ and $(a,b) \in \Rc_{1}$.

\item We consider a cover relation $(a,b)$ in $\Rc$ which is not in $\Rc_{1}$ and let $(u,v) \in \Rc_{1}$ be such that there is a commutative diagram
\[
\xymatrix{
a \ar[r]\ar[d] & u \ar[d]\\
b \ar[r] & v}
\]
We complete it as the following diagram
\[
\xymatrix{
a \ar[r]^{\in \Rc} \ar[dr]_{\in \Rc} & b\land u \pb \ar[r] \ar[d]^{\in \Rc_{1}} & u\ar[d]^{\in \Rc_{1}}\\
& b\ar[r] &v. 
}
\]
Here we used the fact that the right square is a pullback to obtain $(b\land u,b) \in \Rc_{1} \subseteq \Rc$. Moreover, the pullback
\[
\xymatrix{
a \pb \ar[r] \ar[d] & b\land u \ar[d] \\
a \ar[r]_{\in \Rc} & b
}
\]
implies that $(a,b\land u) \in \Rc$. Since $(a,b)$ is a cover relation in $\Rc$ we have $b \land u = a$ or $b\land u = b$. In the first case, we obtain $(a,b)\in \Rc_{1}$ and this is a contradiction. Hence we have $b \land u = b$, so $b\leq u$ and $(a,b) \in \,^{\boxslash} \Rc_{1}$. 

\end{enumerate}
\end{proof}
\begin{lemma}\label{lem:cov_relations}
Let $\Rc_1 \subseteq \Rc$ be two transfer systems on $L$. Then, the following are equivalent.
\begin{enumerate}
\item $\Rc_1 \subseteq \Rc$ is a cover relation in $\Trs(L)$. 
\item $\Rc^{c} \setminus \Rc_1$ contains only one relation $(a,b)$, and $\Rc_1 = \Rc_{(a,b)}$.
\item $\Rc \cap \,^{\boxslash} \Rc_{1}$ contains only one non-trivial relation.
\end{enumerate}
\end{lemma}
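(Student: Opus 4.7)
The plan is to close the loop $(1)\Rightarrow(2)\Rightarrow(3)\Rightarrow(1)$, using two basic facts throughout: any transfer system is the transitive closure of its cover set (so $\Rc_1\subsetneq\Rc$ forces $\Rc^c\setminus\Rc_1\neq\emptyset$), and $\Rc_1\cap\,^{\boxslash}\Rc_1=\Delta(L)$ since lifting a relation against itself equates its endpoints. For $(1)\Rightarrow(2)$, pick $(a,b)\in\Rc^c\setminus\Rc_1$ that is $\sqsubset$-maximal in this set. A short check upgrades this to $\sqsubset$-maximality in all of $\Rc^c$: if $(a,b)\sqsubset(c,d)$ with $(c,d)\in\Rc^c\cap\Rc_1$, then $(a,b)$ is a pullback of $(c,d)\in\Rc_1$, so $(a,b)\in\Rc_1$, a contradiction. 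Lemma \ref{lem:lower_ts} now yields $\Rc_{(a,b)}\lessdot\Rc$, and the same pullback-closure of $\Rc_1$ shows $\Rc_1\subseteq\Rc_{(a,b)}$; since $\Rc_1\lessdot\Rc$ by hypothesis, the sandwich forces $\Rc_1=\Rc_{(a,b)}$. Uniqueness of $(a,b)$ in $\Rc^c\setminus\Rc_1$ is precisely the $\sqsubset$-maximality in $\Rc^c$.

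For $(2)\Rightarrow(3)$, Lemma \ref{lem:step1}(2) already places $(a,b)$ in $\Rc\cap\,^{\boxslash}\Rc_1$. To prove uniqueness, take a non-trivial $(u,v)\in\Rc\cap\,^{\boxslash}\Rc_1$ and decompose $u=u_0<u_1<\cdots<u_n=v$ as a chain of covers in $\Rc$. By (2), a cover of $\Rc$ is outside $\Rc_1$ iff it equals $(a,b)$; hence exactly one step of the chain equals $(a,b)$ (at least one, otherwise $(u,v)\in\Rc_1\cap\,^{\boxslash}\Rc_1=\Delta(L)$ contradicts $u\neq v$; at most one because $a<b$ and the chain is strictly increasing). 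Write this step as $(u_k,u_{k+1})=(a,b)$; transitivity of $\Rc_1$ then gives $(b,v)\in\Rc_1$, and lifting $(u,v)$ against $(b,v)$ forces $v\leq b$, so $v=b$. If also $u<a$, then $u\land b=u\neq a$ shows $(a,b)\not\sqsubset(u,b)$, placing $(u,b)$ in $\Rc_{(a,b)}=\Rc_1$; then $(u,v)=(u,b)\in\Rc_1\cap\,^{\boxslash}\Rc_1=\Delta(L)$ contradicts $u<b$. Hence $u=a$ and $(u,v)=(a,b)$.

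For $(3)\Rightarrow(1)$, suppose toward contradiction that $\Rc_1\subsetneq\Rc'\subsetneq\Rc$ for some transfer system $\Rc'$. Applying Lemma \ref{lem:step1}(1) to the two strict containments produces non-trivial relations in $\Rc'\cap\,^{\boxslash}\Rc_1$ and in $\Rc\cap\,^{\boxslash}\Rc'$. Both of these sets are contained in $\Rc\cap\,^{\boxslash}\Rc_1$ (the second because $\Rc_1\subseteq\Rc'$ implies $\,^{\boxslash}\Rc'\subseteq\,^{\boxslash}\Rc_1$), so by (3) both non-trivial relations must equal $(a,b)$. Thus $(a,b)\in\Rc'\cap\,^{\boxslash}\Rc'=\Delta(L)$, contradicting $a\neq b$. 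The most delicate piece is the uniqueness argument in $(2)\Rightarrow(3)$, where pinning down both endpoints of $(u,v)$ demands two separate lifting tests, against $(b,v)$ and against $(u,b)$, the second membership $(u,b)\in\Rc_1$ itself coming from the explicit pullback description $\Rc_1=\Rc_{(a,b)}$.
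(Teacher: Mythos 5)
Your proof is correct, and it follows the same overall skeleton as the paper: the cycle $(1)\Rightarrow(2)\Rightarrow(3)\Rightarrow(1)$, with \cref{lem:lower_ts} powering the first implication and \cref{lem:step1} powering the first and third. The third implication is essentially identical to the paper's. The genuine divergence is in $(2)\Rightarrow(3)$, and it is worth noting. The paper first proves that any non-trivial $(c,d)\in \Rc\cap\,^{\boxslash}\Rc_1$ must itself be a cover relation of $\Rc$: it factors $(c,d)$ as $c\to e\to d$ with $(e,d)\in\Rc^c$ and uses the fact that $\,^{\boxslash}\Rc_1$ is closed under pushouts to conclude $(e,d)\in\Rc\cap\,^{\boxslash}\Rc_1$, hence $(e,d)=(a,b)$, and then a single pullback computation from $(a,b)\sqsubset(c,d)$ finishes. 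You instead decompose $(u,v)$ into a full chain of covers of $\Rc$, observe via (2) that exactly one step equals $(a,b)$, and then pin down the two endpoints by two separate arguments: a lifting test of $(u,v)$ against $(b,v)\in\Rc_1$ to get $v=b$, and the explicit pullback description of $\Rc_{(a,b)}$ to show $(u,b)\in\Rc_1$ when $u<a$, forcing $u=a$. Your route trades the paper's one-line appeal to left-saturation (pushout-closure) of $\,^{\boxslash}\Rc_1$ for a more hands-on chain analysis; both are valid, and yours arguably makes more transparent exactly where the hypothesis $\Rc_1=\Rc_{(a,b)}$ is used. In $(1)\Rightarrow(2)$ you also choose a $\sqsubset$-maximal element up front and get the cover $\Rc_{(a,b)}\lessdot\Rc$ directly from \cref{lem:lower_ts}, whereas the paper works with an arbitrary element of $\Rc^c\setminus\Rc_1$ and squeezes; this is only a cosmetic difference. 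The only nitpick: in $(3)\Rightarrow(1)$ you should say explicitly that $\Rc_1\neq\Rc$, which follows from \cref{lem:step1}(1) since (3) provides a non-trivial relation in $\Rc\cap\,^{\boxslash}\Rc_1$.
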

\begin{proof}\
\begin{itemize}
\item[ $ 1. \Longrightarrow 2.$] Since $\Rc_{1} \lessdot \Rc$, there is a non-trivial relation $(a,b) \in \Rc \setminus \Rc_1$. This relation is a composition of cover relations in $\Rc$ and if all these cover relations are in $\Rc_{1}$, then the composition itself is in $\Rc_1$. So we can assume that $(a,b)\in \Rc^c$. By \cref{lem:lower_ts}, we have $\Rc_{(a,b)} \subseteq \Rc$ and $\Rc_{(a,b)} \neq \Rc$. Suppose that there is $(u,v) \in \Rc_1$  such that $(u,v) \notin \Rc_{(a,b)}$. Then $(a,b) \sqsubset (u,v)$, and there is a pullback diagram
\[
\xymatrix{
a\pb \ar[r]\ar[d] & b \ar[d]\\
u\ar[r]_{\in \Rc_1} & v
}
\]
Note that $\Rc_1$ is a transfer system and it is closed under pullback, so $(a,b) \in \Rc_1$ which is a contradiction. Hence we have $\Rc_{1} \subseteq \Rc_{(a,b)} \subsetneq \Rc$. Since $\Rc_{1} \lessdot \Rc$, we have $\Rc_{1} = \Rc_{(a,b)}$. If there is another relation $(c,d)\in \Rc^{c} \setminus \Rc_{1}$, we obtain similarly $\Rc_{1} = \Rc_{(c,d)}$. So we have $\Rc_{(a,b)} = \Rc_{(c,d)}$, hence the sets $\{(u,v) \in \Rc\ |\ (a,b) \sqsubset (u,v) \}$ and $\{ (u,v) \in \Rc\ |\ (c,d) \sqsubset (u,v) \}$ are equal. In particular we have $(a,b) \sqsubset (c,d) \sqsubset (a,b)$, so $(a,b) = (c,d)$. 
\item[ $ 2. \Longrightarrow 3.$] Since there is $(a,b) \in \Rc^{c} \setminus \Rc_{1}$, the second item of \cref{lem:step1} implies that $(a,b)\in \Rc \cap \,^{\boxslash}\Rc_1$.  Conversely, let $(c,d)\in \Rc \cap \,^{\boxslash} \Rc_{1}$ with $c\neq d$. Since $\Rc_{1} \cap\,^{\boxslash} \Rc_{1} = \Delta(L)$ we have $(c,d) \notin \Rc_1$. It remains to prove that $(c,d)$ is a cover relation in $R$, that is, $(c,d)\in \Rc^{c}$. If not we suppose $(c, d)$ is a composition of $(c, e)$ and $(e, d)$, where $(c,e) \in \Rc$ and $(e, d) \in \Rc^c$. We have a pushout diagram:
\[
\xymatrix{
c\po \ar[r]\ar[d]_{\,^{\boxslash}\Rc_{1}\ni} & e\ar[d] \\
d\ar[r] & d 
}
\]
Since $^{\boxslash}\Rc_{1}$ is a left saturated set, it is closed under pushout, so $(e,d) \in \Rc\cap ^{\boxslash}\Rc_{1}$. In particular $(e,d) \in \Rc^{c} \setminus \Rc_1$,  hence it is equal to $(a,b)$. Finally $(c,d) \notin \Rc_{1} = \Rc_{(a,b)}$ implies $(a,b) \sqsubset (c,d)$, so we have a pullback diagram
\[
\xymatrix{
a\pb \ar[d] \ar[rr] && b\ar[d]\\
c \ar[r] & a \ar[r] & d=b
}
\]
so $a\leq c \leq a$, and $(c,d) = (a,b)$. 
\item[ $ 3. \Longrightarrow 1.$] Let $(a,b)$ be the unique non-trivial relation in $\Rc \cap \,^{\boxslash}\Rc_{1}$. Certainly, we have $\Rc_1 \neq \Rc$. Let $\Rc'$ be a transfer system such that $\Rc_{1} \subseteq \Rc' \subsetneq \Rc$. By the first item of \cref{lem:step1}, there is a non-trivial relation $(c,d) \in \Rc \cap \,^{\boxslash} \Rc'$. Since $\Rc_{1} \subseteq \Rc'$ we have  $\,^{\boxslash} \Rc' \subseteq \,^{\boxslash} \Rc_{1}$, so $(c,d) \in \Rc \cap \,^{\boxslash}\Rc_{1}$. Hence $(c,d) = (a,b)$, and it follows $(a,b) \in \, ^{\boxslash}\Rc^{'}$. If $\Rc_1 \neq \Rc'$, then there is a relation $(e,f) \in \Rc' \cap \,^{\boxslash}\Rc_1 \subseteq \Rc \cap \,^{\boxslash}\Rc_1$. Hence $(e,f) = (a,b)$, and it follows $(a,b) \in \Rc^{'}$.  Thus, we have $(a,b)\in \Rc^{'} \cap \, ^{\boxslash}\Rc^{'}$, that is, $(a,b)$ is trivial, which is a contradiction. 
\end{itemize}
\end{proof}

\begin{remark}
Let $\Rc$ and $\Rc_1$ be two transfer systems such that $\Rc_1 \lessdot \Rc$. It follows from the proof $2. \Longrightarrow 3.$ of \cref{lem:cov_relations} that the unique relation in $\Rc^{c}\setminus \Rc_1$ is exactly the unique non-trivial relation in $\Rc \cap \,^{\boxslash}\Rc_{1}$. Conversely, we denote by $(a,b)$ the unique non-trivial relation in $\Rc \cap \,^{\boxslash}\Rc_{1}$. Then $(a,b)$ is a cover relation in $\Rc$ and hence $(a,b)\in \Rc^{c} \setminus \Rc_{1}$. Indeed $(a,b)$ has a composition $a \to c\to b$ with $(a,c)\in \Rc$ and $(c,b) \in \Rc^{c}$. Since $^{\boxslash}\Rc_{1}$ is a left saturated system we have $(c,b)\in \Rc \cap \,^{\boxslash}\Rc_{1}$. So we have $(c,b)=(a,b)$.  
\end{remark}

Let $\Rc$ be a transfer system on $L$. We denote by $\Cov(\Rc)$ the set of all transfer systems covered by $\Rc$. We also denote by $\Rc^{c}_M$ the set of all maximal relations in $(\Rc^{c},\sqsubset)$. It has been proved in \cref{lem:lower_ts} that there is a map $\Phi$ from $\Rc^{c}_{M}$ to $\Cov(\Rc)$ defined by sending each $(a,b)$ in $\Rc^{c}_{M}$ to $\Rc_{(a,b)}$ in $\Cov(\Rc)$. Then we have the following:

\begin{proposition}\label{prop:lower_cover}
   The map $\Phi$ defined above is a bijection from $\Rc^{c}_{M}$ to $\Cov(\Rc)$.
\end{proposition}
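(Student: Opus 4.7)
The plan is to verify that $\Phi$ is well-defined, injective, and surjective, relying almost entirely on the structural results already established in \cref{lem:lower_ts} and \cref{lem:cov_relations}. First, $\Phi$ is well-defined by the second assertion of \cref{lem:lower_ts}: for any $(a,b)\in \Rc^{c}_{M}$, the transfer system $\Rc_{(a,b)}$ is covered by $\Rc$, so $\Phi(a,b)\in \Cov(\Rc)$.

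For injectivity, the key observation is that for $(a,b)\in \Rc^{c}_{M}$ one has $\Rc^{c}\setminus \Rc_{(a,b)} = \{(a,b)\}$. Indeed, $(a,b)\in \Rc^{c}$ and, since $(a,b)\sqsubset (a,b)$ with $a\neq b$, the relation $(a,b)$ is removed from $\Rc$ in the definition of $\Rc_{(a,b)}$. Conversely, any $(u,v)\in \Rc^{c}\setminus \Rc_{(a,b)}$ satisfies $u\neq v$ and $(a,b)\sqsubset (u,v)$, so the maximality of $(a,b)$ in $(\Rc^{c},\sqsubset)$ forces $(u,v)=(a,b)$. Hence $(a,b)$ can be recovered from $\Rc_{(a,b)}$ as the unique non-trivial element of $\Rc^{c}\setminus \Rc_{(a,b)}$, and an equality $\Phi(a,b)=\Phi(c,d)$ immediately gives $(a,b)=(c,d)$.

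For surjectivity, let $\Rc_{1}\in \Cov(\Rc)$, i.e. $\Rc_{1}\lessdot \Rc$. The implication $1.\Rightarrow 2.$ of \cref{lem:cov_relations} yields a unique $(a,b)\in \Rc^{c}$ with $\Rc^{c}\setminus \Rc_{1}=\{(a,b)\}$ and $\Rc_{1}=\Rc_{(a,b)}$. It then suffices to check that $(a,b)\in \Rc^{c}_{M}$: if $(u,v)\in \Rc^{c}$ satisfies $(a,b)\sqsubset (u,v)$ and $(u,v)\neq (a,b)$, then $u\neq v$, so $(u,v)\notin \Rc_{(a,b)}=\Rc_{1}$ while $(u,v)\in \Rc$, which forces $(u,v)\in \Rc^{c}\setminus \Rc_{1}=\{(a,b)\}$, a contradiction. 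Therefore $\Phi(a,b)=\Rc_{1}$, proving surjectivity.

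The only real obstacle is a purely notational one: keeping careful track of the distinction between a cover relation \emph{of the poset $\Rc$} (an element of $\Rc^{c}$) and a cover relation \emph{of $\Rc$ in the ambient lattice $\Trs(L)$} (an element of $\Cov(\Rc)$). Once this bookkeeping is in place, the proof reduces to extracting the unique non-trivial element of $\Rc^{c}\setminus \Rc_{(a,b)}$, a feature that is essentially already encoded in the proofs of the two preceding lemmas.
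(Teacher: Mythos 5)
Your proof is correct and follows essentially the same route as the paper: well-definedness from \cref{lem:lower_ts}, injectivity by recovering $(a,b)$ as the unique element of $\Rc^{c}\setminus\Rc_{(a,b)}$, and surjectivity by invoking \cref{lem:cov_relations} and checking maximality of the extracted relation. The only cosmetic differences are that you use maximality of $(a,b)$ where the paper uses antisymmetry of $\sqsubset$ in the injectivity step, and you deduce $(u,v)\notin\Rc_{1}$ directly from the definition of $\Rc_{(a,b)}$ where the paper argues via pullback-closedness; both variants are valid.
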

\begin{proof}
    The fact that $\Phi$ is an injection has been seen in the proof \cref{lem:cov_relations}. For any $\Rc_{i} \in \Cov(\Rc)$, using \cref{lem:cov_relations}, there is a unique relation $(a_i, b_i)$ in $\Rc^{c}\setminus \Rc_{i}$. We have to see that $(a_i,b_i)$ is maximal in $(\Rc^{c},\sqsubset)$. If there is a cover relation $(u,v)$ in $\Rc^{c}$ such that $(a_i,b_i) \sqsubset (u, v)$, then we claim that $(u,v)\notin \Rc_i$. Otherwise it follows form the pullback diagram that $(a_i, b_i)\in \Rc_{i}$, which is a contradiction. 
\[
\xymatrix{
a_{i}\pb \ar[r]\ar[d] & b_{i} \ar[d]\\
u\ar[r]_{\in \Rc_i} & v
}
\]

 So $(u,v)$ is also in $\Rc^c \setminus \Rc_i$. Note that there is only one relation in $\Rc^{c}\setminus \Rc_{i}$. Then we have $(u,v)=(a_i, b_i)$ and hence $(a_i, b_i)$ is maximal in $\Rc^c$. It follows from the second item of \cref{lem:cov_relations} again that $\Phi(a_i,b_i)= \Rc_{(a_i, b_i)}=\Rc_{i}$. So we proved that $\Phi$ is sujective. 
\end{proof}
 
We are now ready to prove \cref{pro:join-irreducible}.
\begin{proof}[Proof of \cref{pro:join-irreducible}]

Let $\Rc$ be a join-irreducible element in $\Trs(L)$. It covers a unique element that we denote by $\Rc_{*}$. Since $\Rc_{*} \lessdot \Rc$, by \cref{lem:cov_relations}, there exists a unique relation $(a,b) \in \Rc^c \setminus \Rc_*$. So $\Tr(a,b)$, the smallest transfer system containing $(a,b)$, satisfies $\Tr(a,b) \subseteq \Rc$ and $\Tr(a,b) \not\subseteq \Rc_{*}$. Hence we have $\Rc_{*} \subsetneq \Tr(a,b) \lor \Rc_* \subseteq \Rc$. Hence $\Rc = \Rc_{*} \lor \Tr(a,b)$. Since $\Rc$ is join-irreducible and $\Rc \neq \Rc_{*}$, we have $\Rc = \Tr(a,b)$. 

Conversely if $(a,b) \in \Rel^*(L)$, we have to see that $\Tr(a,b)$ is a join-irreducible transfer system. Using \cref{lem:lower_ts}, we consider the transfer system $\Tr(a,b)_{*} := \Tr(a,b)_{(a,b)}$. It is obtained by removing from $\Tr(a,b)$ the relations $(u,v)\in \Tr(a,b)$ such that $(a,b)\sqsubset (u,v)$. But $(u,v)\in \Tr(a,b)$ implies that $(u,v) \sqsubset (a,b)$. Hence $\Tr(a,b)_* = \Tr(a,b) \setminus \{(a,b)\}$. By the second item of \cref{lem:cov_relations}, we have a cover relation $\Tr(a,b)_* \lessdot \Tr(a,b)$. If $\Rc$ is a transfer system such that $\Rc \subsetneq \Tr(a,b)$, then $(a,b)\notin \Rc$, so $\Rc \subseteq \Tr(a,b)_*$. In other words, $\Tr(a,b)$ covers a unique element $\Tr(a,b)_*$, hence it is join-irreducible. 
\end{proof}

\begin{proof}[Proof of \cref{pro:isomorphism}]

    Our proof is organized as follows. If $\Trs(L) \cong \Trs(L')$, then their posets of join-irreducible elements $(\Rel^*(L),\sqsubset)$ and $(\Rel^*(L'),\sqsubset)$ are isomorphic. We show that there is a unique largest principal ideal in $(\Rel^*(L),\sqsubset)$ and that it is isomorphic to $(L\setminus \{0 \},\leq)$. The result follows easily. 
    

    The two posets $\Trs(L)$ and $\Trs(L')$ are empty if and only if there is no non-trivial relation in $L$ and $L'$ and this is the case if and only if the lattices have only $1$ element. Otherwise, we denote by $0$ and $1$ the least and greatest elements of $L$ and we assume that $0\neq 1$.

    The principal (order) ideal of $(\Rel^*(L),\sqsubset)$ generated by $(0,1)$ consists of all the elements $(x,y)$ such that there is a pullback diagram
    \[
    \xymatrix{
    x \pb \ar[r] \ar[d] & y \ar[d] \\
    0 \ar[r] & 1.
    }
    \]
    This condition implies that $x = 0$ and $0\neq y\leq 1$. Hence this ideal contains all the pairs $(0,y)$ for $y\in L$ such that $0\neq y$. Moreover for two such pairs we have $(0,x)\sqsubset (0,y)$ if and only if there is a pullback diagram
    \[
    \xymatrix{
    0 \pb \ar[r] \ar[d] & x \ar[d] \\
    0 \ar[r]& y,
    }
    \]
    that is if and only if $x\leq y$. In other words the principal ideal generated by $(0,1)$ is isomorphic to $(L\setminus \{0\},\leq)$.

    If $(x,y) \in \Rel^*(L)$, we denote by ${<}(x,y) {>}$ the principal ideal generated by $(x,y)$. Concretely, we have
    \[
    {<}(x,y) {>} = \{ (a,b) \in \Rel^{*}(L)\ |\ (a,b) \sqsubset (x,y) \}.
    \]
    Clearly if ${<}(x,y) {>}$ has maximal size, then $(x,y)$ is a maximal element of $(\Rel^*(L),\sqsubset)$. 
    
    Let $(x,y)$ be a maximal element in $(\Rel^*(L),\sqsubset)$. Since $x<y \leq 1$, we have a pullback diagram
    \[
    \xymatrix{
    x \pb \ar[r] \ar[d] & y\ar[d] \\
    x \ar[r] & 1,
    }
    \]
    so by maximality, we have $(x,y) = (x,1)$. 

    The ideal $I = {<}(x,1){>}$ is the set of \emp{non-trivial relations} $(x\land b,b)$ such that $b\leq 1$. We have $x\land b = b$ if and only if $b\leq x$. Hence
    \[
    I = \{ (x\land b,b)\ |\ b\leq 1 \hbox{ and } b\not\leq x\}.
    \]
    Hence $I$ has cardinality $|L|-|(x)|$ where $(x)$ is the principal ideal in $(L,\leq)$ generated by $x$. Since $L$ has $0$ as least element, we have $|(x)| = \left\{\begin{array}{cc}1 & \hbox{ if $x=0$,}  \\\geq 2 &\hbox{ otherwise.} \end{array}\right.$ 
    
    In particular the order ideal generated by a non-trivial relation $(x,y)$ has cardinality $|L|-1$ if and only if $(x,y) = (0,1)$, and it has a strictly smaller cardinality otherwise.
    
\end{proof}

\section{Semidistributivity and trimness}
\subsection{Distributivity}
The notion of \emp{semidistributive} lattice is a weakening of the one of \emp{distributive} lattice. Hence before proving that the lattice of transfer systems is always semidistributive, it is natural to ask under what conditions  the lattice of transfer systems is distributive. Unfortunately for a finite lattice this is almost never the case.
\begin{proposition}
Let $L$ be a finite lattice. Then $\Trs(L)$ is \emp{distributive} if and only if $|L|\leq 2$.
\end{proposition}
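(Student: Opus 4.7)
My plan is to treat the two directions separately. If $|L| \leq 2$, then $\Rel^*(L)$ has at most one element, so by \cref{pro:join-irreducible} the lattice $\Trs(L)$ has at most two elements and is trivially distributive. For the converse, I would show that whenever $|L| \geq 3$, the lattice $\Trs(L)$ exhibits an explicit failure of distributivity. The witness is constructed from a three-element chain $a < b < c$ in $L$, which exists because the bottom, the top, and any intermediate element of $L$ form such a chain.

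Setting $\alpha = \Tr(a,b)$, $\beta = \Tr(b,c)$, $\gamma = \Tr(a,c)$, I will verify
\[
(\alpha \land \gamma) \lor (\beta \land \gamma) \;\subsetneq\; (\alpha \lor \beta) \land \gamma.
\]
The easy half is to establish two containments. First, $\alpha \leq \gamma$, because $(a,b) = (a \land b, b)$ lies among the pullback generators of $\gamma$ described in \cref{lem:smallest_ts}(2); this yields $\alpha \land \gamma = \alpha$. Second, $\gamma \leq \alpha \lor \beta$, by factoring through $b \land y$: for any non-trivial generator $(a \land y, y)$ of $\gamma$ with $y \leq c$, the relation $(a \land y, b \land y)$ lies in $\alpha$ (since $a \land (b \land y) = a \land y$ thanks to $a \leq b$), and $(b \land y, y)$ lies in $\beta$. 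Hence $(\alpha \lor \beta) \land \gamma = \gamma$, and the displayed strict inclusion reduces to the single claim that $(a,c) \notin \alpha \lor (\beta \land \gamma)$.

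The main obstacle will be this last claim. By \cref{lem:smallest_ts}(3), $\alpha \lor (\beta \land \gamma) = (\alpha \cup (\beta \cap \gamma))^{tc}$, so I must rule out every path $a = x_0 \to x_1 \to \cdots \to x_k = c$ whose edges lie in $\alpha \cup (\beta \cap \gamma)$. The analysis rests on two structural observations: each non-trivial edge $(u,v)$ of $\alpha$ satisfies $u \leq a$ and $v \leq b$, while each non-trivial edge $(u,v)$ of $\beta \cap \gamma$ satisfies $u = b \land v = a \land v \leq a$ together with $v \not\leq b$. Therefore any non-trivial continuation after $x_1$ demands $x_1 \leq a$, yet a non-trivial first step in $\alpha$ produces $x_1$ with $a \leq x_1 \leq b$ and $x_1 \not\leq a$, and a non-trivial first step in $\beta \cap \gamma$ produces $x_1 \not\leq b \geq a$; neither satisfies $x_1 \leq a$. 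Hence the path must have length exactly one, so $(a,c)$ itself would have to lie in $\alpha \cup (\beta \cap \gamma)$. But $(a,c) \notin \alpha$ since $c \not\leq b$, and $(a,c) \notin \beta \cap \gamma$ since the $\beta$-condition $a = b \land c$ collapses to $a = b$, contradicting $a < b$. Combining this with the equalities above gives the strict inclusion and rules out distributivity.
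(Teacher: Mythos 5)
Your proof is correct, and it takes a somewhat different route from the paper's. Both arguments reduce the converse direction to a three-element chain, but the witnesses differ: the paper takes a chain $0\lessdot 1\lessdot 2$ of cover relations starting at the least element and exhibits a pentagon sublattice on $\Tr(0,2)_*\subsetneq \Tr(0,2)$ and $\Tr(1,2)$ (thereby proving the stronger fact that $\Trs(L)$ is not even modular), whereas you take an arbitrary chain $a<b<c$ and verify directly that the meet-over-join identity fails for the three principal transfer systems $\Tr(a,b)$, $\Tr(b,c)$, $\Tr(a,c)$. Your computation checks out: $\Tr(a,b)\subseteq\Tr(a,c)$ and $\Tr(a,c)\subseteq\Tr(a,b)\lor\Tr(b,c)$ follow from the explicit description of $\Tr(a,b)$ as $\{(a\land x,x)\mid x\leq b\}\cup\Delta(L)$, and the key exclusion $(a,c)\notin\Tr(a,b)\lor\big(\Tr(b,c)\land\Tr(a,c)\big)$ is correctly established by your path analysis — every non-trivial relation in $\Tr(a,b)\cup\big(\Tr(b,c)\cap\Tr(a,c)\big)$ has source $\leq a$, while no non-trivial first step out of $a$ lands at an element $\leq a$, so any witnessing path has length one, and $(a,c)$ itself lies in neither generating set. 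What your version buys is flexibility (no cover relations or least element are needed, only some $3$-chain) and a violation phrased entirely in terms of the join-irreducibles attached to the chain; what the paper's version buys is an explicit $N_5$, hence non-modularity as a by-product.
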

\begin{proof}
If $|L| = 1$, then there is only one transfer system on $L$ which is $\Delta(L)$. If $|L|=2$, since $L$ is a lattice it has a greatest element and a least element, so it is a total order with $2$ elements. There are two transfer systems $\Delta(L)$ and $L$, and the lattice of transfer systems is also a total order with $2$ elements. Both of these lattices are distributive. 

Conversely, if $|L|\geq 3$ since $L$ has a greatest element and a least element, there is a chain (a set of pairwise comparable elements) of length at least $2$ in $L$ starting at the least element. Without loss of generality we assume that it is labelled by $0\to 1\to 2\to \cdots$, where $0$ is the least element of $L$ and $(0,1), (1,2)$ are cover relations in $L$. Our objective is to construct from this chain a pentagonal sublattice of $\Trs(L)$ and conclude that $\Trs(L)$ is not distributive.

We consider $\Tr(0,2)$ the smallest transfer system containing $(0,2)$. By \cref{lem:smallest_ts}, we have:
\[
\Tr(0,2) = \{ (0\land c,c) | c\leq 2\} \cup \Delta(L) = \{ (0,x) \ |\ x\leq 2\}\cup \Delta(L). 
\]
It is join-irreducible and $\Tr(0,2)_{*} = \Tr(0,2)_{(0,2)} =  \{ (0,x) \ |\ x< 2\}\cup \Delta(L)$.
Similarly, we have 
\[ 
\Tr(1,2) = \{ \{ (1\land c,c) | c\leq 2\} \cup \Delta(L)\}  = \{ (1,2) \} \cup \{ (0,c)\ |\ c\leq 2, 1\not\leq c\} \cup \Delta(L)  
\]
Since $(0,2) \notin \Tr(1,2)$ we have $\Tr(1,2) \cap \Tr(0,2) = \Tr(1,2) \cap \Tr(0,2)_*$. Moreover $\Tr(1,2) \lor \Tr(0,2) = \big(\Tr(1,2) \cup \Tr(0,2)\big)^{tc} = \Tr(0,2) \cup \{(1,2)\}$. Similarly, $\Tr(1,2) \lor \Tr(0,2)_* = \big(\Tr(1,2) \cup \Tr(0,2)_*\big)^{tc} = \{ (1,2) \} \cup \Tr(0,2)_* \cup \{(0,2)\} = \Tr(0,2) \lor \{(1,2)\}$. Here $(0,2) =(0,1) \cdot (1,2)$ with $(0,1) \in \Tr(0,2)_*$ and $(1,2) \in \Tr(1,2)$. We have obtained the following sublattice
\[
\xymatrix{
& \Tr(0,2) \lor \Tr(1,2) \\
\Tr(0,2)\ar[ur] & \\
\Tr(0,2)_* \ar[u]&  & \Tr(1,2)\ar[luu] \\
& \Tr(0,2)\land \Tr(1,2)\ar[ru]\ar[lu]
}
\] 
This proves that $\Trs(L)$ is not a distributive lattice. 
\end{proof}
\begin{remark}
More generally, it is natural to ask if there is an easy characterization of the categories $\C$ such that the lattice of weak factorization systems is distributive. This is the case of the category of sets by the classification of \cite{Antolin}. 
\end{remark}
\subsection{Semidistributivity}
\begin{theorem}
Let $L$ be a finite lattice. The lattice of transfer systems on $L$ is \emp{semidistributive}.
\end{theorem}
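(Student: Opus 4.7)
The plan is to prove both meet- and join-semidistributivity directly, by a short induction that exploits the explicit description of the meet as intersection and the join as transitive closure of union (\cref{lem:smallest_ts}(3)).

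For meet-semidistributivity, suppose $\Rc \cap \Rc_1 = \Rc \cap \Rc_2 = \Rc_0$, and let $(a,b) \in \Rc \cap (\Rc_1 \vee \Rc_2)$; it suffices to show $(a,b) \in \Rc_0$. I would write $(a,b) = (a_0, a_1)(a_1, a_2) \cdots (a_{n-1}, a_n)$ with $a_0 = a$, $a_n = b$, each $(a_i, a_{i+1}) \in \Rc_1 \cup \Rc_2$, and prove by induction on $i$ that $(a, a_i) \in \Rc_0$. Since $a \leq a_i$, the pullback of $(a,b) \in \Rc$ along $a_i \to b$ is $(a \wedge a_i, a_i) = (a, a_i)$, so $(a, a_i) \in \Rc$ for every $i$. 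For the inductive step, if $(a_i, a_{i+1}) \in \Rc_j$, then the induction hypothesis together with $\Rc_0 \subseteq \Rc_j$ gives $(a, a_i) \in \Rc_j$, and transitivity of $\Rc_j$ yields $(a, a_{i+1}) \in \Rc_j$. Combined with $(a, a_{i+1}) \in \Rc$, this gives $(a, a_{i+1}) \in \Rc \cap \Rc_j = \Rc_0$. At $i = n$ we obtain $(a,b) \in \Rc_0$.

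For join-semidistributivity, I would invoke the isomorphism $\Trs(L) \cong \Lss(L)^{\mathrm{op}}$ from \cref{pro:anti-iso} to translate the claim into meet-semidistributivity in $\Lss(L)$. The meet in $\Lss(L)$ is the left-saturated closure of the union, which reduces to the transitive closure because a union of pushout-closed subrelations is still pushout-closed. A dual induction, now decreasing in $i$ and using the pushout closure of $\Lc$ along $a \to a_i$ to obtain $(a_i, b) = (a_i, a_i \vee b) \in \Lc$, then propagates membership in $\Lc_0$ down the chain in exactly the same way.

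The only potential obstacle is having the right description of joins and meets, but \cref{lem:smallest_ts}(3) and its dual supply these. Once one observes that $\Rc_0 \subseteq \Rc_j$ for both $j = 1, 2$, each induction step is essentially a one-line transitive composition inside the appropriate $\Rc_j$, so both proofs run very cleanly without any delicate case analysis.
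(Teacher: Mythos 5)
Your proof is correct, and it takes a genuinely different route from the paper's for the half that is argued directly. The paper also splits the work in two via the anti-isomorphism $\Trs(L)\cong \Trs(L^{op})^{op}$, but it proves \emph{join}-semidistributivity directly: given $\Rc_1\lor\Rc_2=\Rc_1\lor\Rc_3=P$, it observes that a cover relation of the poset $P=(\Rc_1\cup\Rc_2)^{tc}$ cannot be a nontrivial composite and hence already lies in $\Rc_1\cup\Rc_2$ (and likewise in $\Rc_1\cup\Rc_3$), so every cover of $P$ lies in $\Rc_1\cup(\Rc_2\cap\Rc_3)$; this uses only the poset structure of the transitive closure and never invokes the pullback axiom. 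You instead prove \emph{meet}-semidistributivity directly, and your argument leans essentially on the transfer-system axiom: pullback-closure of $\Rc$ gives $(a,a_i)\in\Rc$ for every intermediate vertex of the chain, and the induction then pushes membership in $\Rc_0=\Rc\cap\Rc_1=\Rc\cap\Rc_2$ along the chain one step at a time. Both arguments rest on the same two ingredients (the description of the join as $(\Rc_1\cup\Rc_2)^{tc}$ from \cref{lem:smallest_ts} and the duality of \cref{pro:anti-iso} and \cref{lem:duality} for the remaining half), so the trade-off is mainly stylistic: the paper's cover-relation observation is slightly slicker and induction-free, while yours is a more hands-on verification that makes visible exactly where closure under pullbacks enters. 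One cosmetic point: \cref{pro:anti-iso} gives an \emph{isomorphism} $\Trs(L)\cong\Lss(L)$ with $\Lss(L)$ ordered by reverse inclusion (equivalently an anti-isomorphism onto the left saturated sets ordered by inclusion), so when you say ``the meet in $\Lss(L)$ is the left-saturated closure of the union'' you should make explicit which order you are using; the dual induction with pushouts that you describe is nevertheless exactly the right computation.
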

\begin{proof}
The first step is to see that it suffices to prove that the lattice of transfer systems on $L$ is join-semidistributive. Recall from \cref{pro:anti-iso}, that $\Rc \mapsto \,^{\boxslash} \Rc$ is an isomorphism from the lattice of transfer systems on $L$ to the lattice of left saturated sets on $L$ (ordered by reverse inclusion). Moreover, by \cref{lem:duality} taking the opposite of a left saturated set induces an anti-isomorphism from the lattice of left saturated sets on $L$ to the lattice of transfer systems on $L^{op}$. Hence there is a lattice anti-isomorphism say $\Phi$ from $\Trs(L)$ to $\Trs(L^{op})$. If $\Trs(L^{op})$ is join-semidistributive, it follows that $\Tr(L)$ is meet-semidsitributive. 


Now we prove the join-semidistributivity. Let $\Rc_1,\Rc_2,\Rc_3$ be three transfer systems on $L$ such that $\Rc_1 \lor \Rc_2 = \Rc_1 \lor \Rc_3$. Note that $\Rc_1 \lor (\Rc_2 \land \Rc_3) \leq \Rc_1 \lor \Rc_2$ holds in any lattice. Conversely, we consider $(x,y) \in P = \Rc_1 \lor \Rc_2$. We assume that $(x, y)$ is a cover relation in the poset $P$. By \cref{pro:wfs_lattice}, the poset $P$ is the transitive closure of $\Rc_1 \cup \Rc_2$. Hence, the cover relation $(x,y)$ is either in $\Rc_1$ or in $\Rc_2$. Since $P = \Rc_1 \lor \Rc_3 = (\Rc_1 \cup \Rc_3)^{tc}$, the relation $(x,y)$ is also in $\Rc_1$ or in $\Rc_3$. It follows that $(x,y)$ is either in $\Rc_1$ or in $\Rc_2 \cap \Rc_3$ and we have:
\[
\Rc_1\lor \Rc_2 \subseteq \big(\Rc_1 \cup (\Rc_2 \cap \Rc_3)\big)^{tc} = \Rc_1 \lor \big(\Rc_2\land \Rc_3\big). 
\]
\end{proof}
\subsection{Trimness}\label{sec:trim}
We recall that an element $x$ of a lattice $L$ is \defn{left modular} if for any $y<z$ in $L$, we have
\[
(y\lor x) \land z = y \lor (x\land z).
\]
A chain in a lattice is called \defn{maximal}, when it is maximal with respect to inclusion.

\begin{definition}
A lattice $(L,\land,\lor)$ is \defn{trim} if  it has a maximal chain of $n+1$ left modular elements, exactly $n$ join-irreducibles and exactly $n$ meet-irreducibles. 
\end{definition}
Trim lattices were introduced by Hugh Thomas in \cite{thomas_trim} as ungraded analogue to distributive lattices. In particular a graded trim lattice is distributive. The typical examples of trim lattices are the \emp{cambrian lattices} associated to Dynkin diagrams (see \cite{thomas_trim,muhle2016trimness}). 

Let us recall that the \defn{length} of a lattice is the maximum of the length of its chains. A lattice of length $n$ has at least $n$ join-irreducibles and $n$ meet-irreducibles.
\begin{definition}
 A lattice $(L,\leq)$ is \defn{extremal} if its length is equal to the number of join-irreducible elements and to the number of meet-irreducible elements.
 \end{definition}
 \begin{proposition}\label{pro:extremal}
 Let $(L,\leq)$ be a finite lattice. Then the lattice of transfer systems on $L$ is \emp{extremal}.
 \end{proposition}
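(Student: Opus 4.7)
The plan is to verify the three equalities defining extremality: the length of $\Trs(L)$, the number of its join-irreducibles, and the number of its meet-irreducibles all coincide with $|\Rel^{*}(L)|$. By \cref{pro:join-irreducible} the number of join-irreducibles is already $|\Rel^{*}(L)|$. For the meet-irreducibles I would combine \cref{pro:anti-iso} and \cref{lem:duality} to obtain an anti-isomorphism between $\Trs(L)$ and $\Trs(L^{\mathrm{op}})$. Any anti-isomorphism interchanges join- and meet-irreducibles, so the meet-irreducibles of $\Trs(L)$ are counted by $|\Rel^{*}(L^{\mathrm{op}})| = |\Rel^{*}(L)|$.

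For the length, the upper bound follows from the labelling of cover relations given by \cref{lem:cov_relations}. In any chain $\Delta(L) = \Rc_0 \lessdot \Rc_1 \lessdot \cdots \lessdot \Rc_m = \Rel(L)$, the label of $\Rc_{i-1} \lessdot \Rc_i$ is the unique non-trivial element $\ell_i$ of $\Rc_i \cap \,^{\boxslash}\Rc_{i-1}$, an element of $\Rel^{*}(L)$. If $\ell_i = \ell_j$ for some $i<j$, then $\ell_i \in \Rc_i \subseteq \Rc_{j-1}$ while $\ell_j \in \,^{\boxslash}\Rc_{j-1}$, so $\ell_i \in \Rc_{j-1} \cap \,^{\boxslash}\Rc_{j-1} = \Delta(L)$, contradicting its non-triviality. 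Hence all labels are distinct and $m \leq |\Rel^{*}(L)|$.

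For the matching lower bound, I would exhibit a maximal chain of length $|\Rel^{*}(L)|$ by adding one relation at a time. Fix a linear extension $\lambda$ of $(L,\leq)$ and list the elements of $\Rel^{*}(L)$ as $r_1 = (a_1,b_1),\ldots,r_n = (a_n,b_n)$, ordered primarily by the $\lambda$-position of the top $b_i$ (smaller first) and secondarily, among relations sharing the same top, by the $\lambda$-position of the bottom $a_i$ (smaller first). Set $\Rc_i = \Delta(L) \cup \{r_1,\ldots,r_i\}$. I would prove by induction on $i$ that $\Rc_i$ is already a transfer system; since $|\Rc_i| = |\Rc_{i-1}| + 1$, this forces $\Rc_{i-1} \lessdot \Rc_i$ in $\Trs(L)$ and yields a chain of length $n = |\Rel^{*}(L)|$ from $\Delta(L)$ to $\Rel(L) = \Rc_n$.

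The induction step reduces to showing that $\Rc_{i-1} \cup \{r_i\}$ is closed under both pullback and transitivity. A pullback of $r_i$ along some $c \leq b_i$ is $(a_i \wedge c, c)$, which is trivial when $c \leq a_i$, equals $r_i$ when $c = b_i$, and otherwise is a non-trivial relation with top $c < b_i$; the primary sort places it strictly before $r_i$, so it lies in $\Rc_{i-1}$ by the inductive hypothesis. For transitivity, a composition $r_i \cdot (b_i,y)$ with $(b_i,y) \in \Rc_{i-1}$ non-trivial would require $y > b_i$, but then $(b_i,y)$ has top $y$ with $\lambda(y) > \lambda(b_i)$, so the primary sort places it after $r_i$, contradicting $(b_i,y) \in \Rc_{i-1}$; and a composition $(x,a_i) \cdot r_i = (x,b_i)$ with $(x,a_i) \in \Rc_{i-1}$ non-trivial has $(x,b_i)$ sharing the top $b_i$ but with $x < a_i$, so by the secondary sort $(x,b_i)$ precedes $r_i$ and already lies in $\Rc_{i-1}$. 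The delicate point, and the main obstacle, is designing the linear extension so that pullback (which drags tops down) and transitivity (which can extend tops up) are simultaneously controlled; once this is done the three quantities coincide and $\Trs(L)$ is extremal.
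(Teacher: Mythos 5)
Your proof is correct, and for the two counting statements (join- and meet-irreducibles) it coincides with the paper's argument. Where you genuinely diverge is in producing a maximal chain of length $|\Rel^*(L)|$. The paper works top-down: starting from the full relation it repeatedly removes a single cover relation $(x,y)$ of $\Rc^{c}$ whose top $y$ is maximal in $(L,\leq)$, checks that such an $(x,y)$ is $\sqsubset$-maximal in $\Rc$, and invokes \cref{lem:lower_ts} to conclude that $\Rc\setminus\{(x,y)\}$ is again a transfer system. You work bottom-up: you fix a linear extension, totally order $\Rel^*(L)$ by the position of the top (and then of the bottom), and add one relation at a time, verifying the pullback and transitivity axioms directly at each step. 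Your verification is in fact already complete as written --- the pullback of $r_i$ along $c<b_i$ has a strictly smaller top, a composition on the right of $r_i$ is excluded because its top would come later in the order, and a composition on the left shares the top $b_i$ but has a smaller bottom --- so the closing worry about ``designing the linear extension'' is unfounded: \emph{any} linear extension works, and both closure operations are controlled by the primary/secondary sort you chose. What your route buys is independence from the $\Rc_{(a,b)}$ machinery of \cref{lem:lower_ts} and a fully explicit maximal chain indexed by a linear extension; what the paper's route buys is brevity, since it reuses lemmas already established. Your upper bound via distinctness of join labels is also heavier than needed (the paper simply notes that each strict inclusion of transfer systems must add at least one relation), but it is correct.
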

 \begin{proof}
 By \cref{pro:join-irreducible}, the lattice $\Trs(L)$ has $|\Rel^{*}(L)|$ join-irreducible elements. Since there is an anti-isomorphism between $\Trs(L)$ and $\Trs(L^{op})$ and $L^{op}$ has as many relations as $L$, this is also the number of meet-irreducible elements. If $\Delta(L) \leq \Rc_1 \leq \Rc_2 \leq \cdots \leq \Rc_n \leq L$ is a chain, each inclusion is strict so at each step there is a relation in $\Rc_{i}$ which is not in $\Rc_{i-1}$. Hence the length of such a chain is bounded by the total number of non-trivial relations on $L$. 
 
 Conversely, we need to construct a chain of length exactly $|\Rel^*(L)|$. Recall from \cref{lem:cov_relations} that in order to construct a cover relation $\Rc_1 \lessdot \Rc$ in $\Trs(L)$, we need to choose a good cover relation in $\Rc^{c}$. We consider $(x,y) \in \Rc^c$ with $y$ is maximal in $(L,\leq)$. That is if there is another relation $(a,b)$ in $\Rc^c$ such that $y \leq b$, then $y=b$. Suppose that there is a pullback diagram as follow, where $(u,v) \in \Rc$.
 \[
 \xymatrix{
 x\pb \ar[r]\ar[d] & y \ar[d]\\
 u \ar[r]_{\in \Rc} & v
 }
 \]
 As $(x, y)$ is non-trivial, so is $(u,v)$. Then $(u,v)$ can be written as $u \to e \to v$ with $(u,e)\in \Rc$ and $(e,v) \in \Rc^c$. By maximality of $y$ we have $v = y$ and hence $x= y \land u = v\land u = u$. That is $(u,v) = (x,y)$. Hence the relation $(x,y)$ is maximal among all relations in $\Rc$ with respect to $\sqsubset$. So by \cref{lem:lower_ts}, $\Rc_{(x,y)} = \Rc \setminus \{ (x,y) \}$.  In any transfer system $\Rc\neq \Delta(L)$ we can always find such a relation $(x,y)$, so we can always construct a lower cover relation where we remove exactly $1$ relation. So we can construct a maximal chain of length $|\Rel^*(L)|$ starting from $L$ and ending at $\Delta(L)$. 
 \end{proof}
 \begin{theorem}
 Let $(L,\leq)$ be a finite lattice. Then the lattice of transfer systems on $L$ is a \emp{trim} lattice.
 \end{theorem}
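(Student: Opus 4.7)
The plan is to combine the two results we have just established for $\Trs(L)$ and invoke a general lattice-theoretic principle from the literature. By the semidistributivity theorem proved above, $\Trs(L)$ is a semidistributive lattice; by \cref{pro:extremal}, it is also extremal. As recalled in the introduction of this section (see the remark preceding \cref{sec:trim}), it was proved in \cite{TW} that every semidistributive extremal lattice is trim. Applying that result directly to $\Trs(L)$ yields the conclusion.

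More concretely, the first step is to verify that the hypotheses of the Thomas--Williams theorem are satisfied. Semidistributivity is exactly the statement of the previous theorem; extremality requires the number of join-irreducibles, the number of meet-irreducibles, and the length of $\Trs(L)$ to all coincide, and each of these was computed in \cref{pro:extremal} to equal $|\Rel^{*}(L)|$ (using \cref{pro:join-irreducible}, the anti-isomorphism $\Trs(L) \cong \Trs(L^{op})^{op}$ of \cref{pro:anti-iso} and \cref{lem:duality}, and the explicit construction of a maximal chain removing one relation at a time). Once both hypotheses are in place, the theorem of \cite{TW} provides a maximal chain of left modular elements of the required length, which is precisely what is needed to conclude trimness.

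The only potential obstacle would be a mismatch in conventions for extremality between our setting and that of \cite{TW}, but since our definition (length equals the number of join-irreducibles equals the number of meet-irreducibles) is the standard one used in that reference, no additional argument is needed. Hence the proof reduces to a citation once the preceding results have been assembled.
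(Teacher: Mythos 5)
Your proof is correct and is essentially identical to the paper's: both invoke the Thomas--Williams theorem that an extremal semidistributive lattice is trim, combining \cref{thm:semidistributive} with \cref{pro:extremal}. No further comment is needed.
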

 \begin{proof}
 By \cite[Theorem 1.4]{TW} an extremal semidistributive lattice is trim. Hence the result follows from \cref{thm:semidistributive} and \cref{pro:extremal}.
 \end{proof}

  \section{Consequences of semidistributivity}
 In this section, we investigate some of the consequences of the semidistributivity of the lattice of weak factorization systems. We start by a discussion on the canonical join representations for such lattices, then we discuss the existence of a labelling of the Hasse diagram by the join-irreducible elements. In a semidistributive lattice there is always a canonical bijection between the set of join-irreducibles and the meet-irreducibles which is called $\kappa$. We describe this map for the lattice of weak factorization systems. 
 \subsection{Canonical join representations and join-labelling}\label{sec:canonical_join}
 Let $(L,\leq)$ be a lattice. A subset $B\subseteq L$ is called a \defn{join representation} of $a\in L$ if $a = \lor B$. We say that $B$ is \defn{irredundant} if for all proper subset $C$ of $B$ we have $\lor C < \lor B$. If a set $B$ is an irredundant join representation, it is an \emp{antichain} in $L$ ( the elements are pairwise incomparable). We say that $C$ is a \defn{refinement} of $B$, if for every $c\in C$ there exists $b \in B$ such that $c \leq b$. This is a preorder on the subsets of $L$ which restricts to a partial order on the antichains of $L$. 

\begin{definition}
A join representation $a = \lor B$ is \defn{canonical} if it is irredundant and $\forall C \subseteq L$ such that $a = \lor C$, we have that $B$ is a refinement of $C$. 
\end{definition}
In other words a canonical join representation is the \emp{least antichain} (if it exists) with respect to the refinement order among antichains joining to $a$. The existence of such join-representation is heavily related to semidistributivity, as we have:
\begin{theorem}
Let $L$ be a finite lattice. The following are equivalent:
\begin{enumerate}
\item $L$ is a join-semidistributive lattice.
\item Every element of $L$ has a canonical join representation.
\end{enumerate}
\end{theorem}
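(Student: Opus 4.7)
The plan is to prove each implication separately; the reverse direction $(2) \Rightarrow (1)$ is short, while the forward direction $(1) \Rightarrow (2)$ requires constructing the canonical join representation explicitly from the lower covers of each element.

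For $(2) \Rightarrow (1)$, assume $a \vee b = a \vee c = d$ and let $B$ be the canonical join representation of $d$. Since $\{a,b\}$ and $\{a,c\}$ are both join representations of $d$, the defining universal property forces $B$ to be a refinement of each of them: every $j \in B$ lies below some element of $\{a,b\}$ and below some element of $\{a,c\}$. Consequently each such $j$ is either $\leq a$, or else simultaneously $\leq b$ and $\leq c$, whence $j \leq b \wedge c$. Joining over $B$ gives $d \leq a \vee (b \wedge c)$, and the reverse inequality is immediate; this is the join-semidistributive identity.

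For $(1) \Rightarrow (2)$, I would construct the canonical join representation directly. Fix $x \in L$ with lower covers $x_1, \ldots, x_k$ and set
\[
D_i = \{ y \in L : y \leq x \text{ and } y \not\leq x_i \}.
\]
The crucial lemma is that $D_i$ is closed under binary meets: if $y, z \in D_i$, then $x_i \lessdot x$ forces $y \vee x_i = z \vee x_i = x$, so join-semidistributivity gives $(y \wedge z) \vee x_i = x$ and hence $y \wedge z \in D_i$. Therefore $D_i$ admits a minimum element $j_i$, which must be join-irreducible: a decomposition $j_i = u \vee v$ with $u, v < j_i$ would force $u, v \notin D_i$ by minimality, so $u, v \leq x_i$, contradicting $j_i \in D_i$. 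Let $J$ be the set of maximal elements of $\{j_1, \ldots, j_k\}$; I would verify that $J$ is the canonical join representation of $x$. The sum $\bigvee J$ equals $x$, for otherwise it lies below some $x_m$, and using the observation $j_i < j_{i'} \Rightarrow j_i \leq x_{i'}$ coming from minimality of $j_{i'}$ in $D_{i'}$, one deduces that some $j_{i^*} \in J$ falls below $x_{i^*}$, contradicting its membership in $D_{i^*}$. For any competing representation $x = \bigvee C$, each lower cover $x_i$ must be escaped by some $c_i \in C$, so $j_i \leq c_i$ by minimality of $j_i$; this gives a refinement statement for every maximal $j_{i^*} \in J$. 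Irredundance of $J$ then follows because a proper subset of $J$ with the same join would itself be refined by $J$, contradicting the antichain structure.

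The main obstacle is the pruning step in $(1) \Rightarrow (2)$: handling the possibility that the $j_i$'s are not pairwise incomparable requires the delicate observation $j_i < j_{i'} \Rightarrow j_i \leq x_{i'}$, which ensures that discarding non-maximal $j_i$'s preserves both the join equal to $x$ and the refinement property. Once this comparability structure is pinned down, verifying the universal property of canonical join representations is routine.
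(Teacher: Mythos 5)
Your proof is correct. The paper does not prove this statement itself---it simply cites \cite[Theorem 2.24]{freese}---and your argument is essentially the standard one from that reference: the direction $(2)\Rightarrow(1)$ by refining the canonical representation of $a\vee b=a\vee c$ through both $\{a,b\}$ and $\{a,c\}$, and the direction $(1)\Rightarrow(2)$ by attaching to each lower cover $x_i\lessdot x$ the minimum $j_i$ of $D_i=\{y\le x : y\not\le x_i\}$, which exists because $\mathrm{SD}_\vee$ makes $D_i$ meet-closed. One small remark: the pruning to maximal elements that you single out as the main obstacle is actually vacuous---applying your refinement argument to the join representation $\{j_i, x_i\}$ shows that any $j_{i'}\ge j_i$ must satisfy $j_{i'}\le j_i$ (the alternative $j_{i'}\le x_i$ would force $j_i\le x_i$), so the $j_i$ already form an antichain---but carrying the pruning step along as you do is harmless and everything you assert about it is true.
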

\begin{proof}
See for example \cite[Theorem 2.24]{freese}. 
\end{proof}
Of course there is the dual notion of \defn{canonical meet representation} and it is related to \emp{meet-semidistributivity}. 

Let $L$ be a join-semidistributive lattice and $x\lessdot y$ be a cover relation in $L$. It is known (see for example \cite{thomas2021introduction} ) that there is a \emp{unique join-irreducible} $j\in L$ such that 
\[
x \lor j = y \hbox{ and } x\lor j_* = x.
\]
The join-irreducible $j$ is called the \defn{join label} of the cover relation $x\lessdot y$. 

\begin{lemma} \label{lem:join label}
Let $(L,\leq)$ be a finite lattice. Then the \emp{join label} of a cover relation $\Rc_1 \lessdot \Rc$ is $\Tr(a,b)$ where $(a,b)$ is the unique non trivial relation in $\Rc \cap \, ^{\boxslash} \Rc_1$. 
\end{lemma}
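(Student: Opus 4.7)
The strategy is to verify directly that $j := \Tr(a,b)$ satisfies the two defining properties of the join label, namely $\Rc_1 \lor j = \Rc$ and $\Rc_1 \lor j_* = \Rc_1$. By \cref{pro:join-irreducible} the element $\Tr(a,b)$ is indeed join-irreducible, so uniqueness of the join label is automatic from the join-semidistributivity of $\Trs(L)$ established in \cref{thm:semidistributive}. Throughout, I use \cref{lem:cov_relations} to identify $(a,b)$ as the unique non-trivial relation in $\Rc \cap \,^{\boxslash}\Rc_1$, noting that by the same lemma it is also the unique element of $\Rc^c \setminus \Rc_1$, and that $\Rc_1 = \Rc_{(a,b)}$ in the notation of \cref{lem:lower_ts}.

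First, I would check that $\Rc_1 \lor \Tr(a,b) = \Rc$. Since $(a,b) \in \Rc$ and $\Rc$ is a transfer system, $\Tr(a,b) \subseteq \Rc$; combined with $\Rc_1 \subseteq \Rc$ this gives $\Rc_1 \lor \Tr(a,b) \subseteq \Rc$. On the other hand $(a,b) \in \Tr(a,b) \subseteq \Rc_1 \lor \Tr(a,b)$ while $(a,b) \notin \Rc_1$, so the containment $\Rc_1 \subseteq \Rc_1 \lor \Tr(a,b)$ is strict. Since $\Rc_1 \lessdot \Rc$, this forces $\Rc_1 \lor \Tr(a,b) = \Rc$.

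The heart of the argument is the second condition. By (the proof of) \cref{pro:join-irreducible} we have $\Tr(a,b)_* = \Tr(a,b) \setminus \{(a,b)\}$, so it suffices to show $\Tr(a,b)_* \subseteq \Rc_1 = \Rc_{(a,b)}$. By \cref{lem:smallest_ts}(2), a non-trivial element of $\Tr(a,b)$ has the form $(a \land c, c)$ with $c \leq b$, and lies in $\Rc$ because $\Rc$ is closed under pullbacks. By the definition of $\Rc_{(a,b)}$ it remains to rule out the relation $(a,b) \sqsubset (a \land c, c)$ unless $(a \land c, c) = (a,b)$. A pullback square witnessing $(a,b) \sqsubset (a \land c, c)$ would in particular require an arrow $b \to c$ in $L$, i.e.\ $b \leq c$; combined with $c \leq b$ this forces $c = b$, hence $(a \land c, c) = (a,b)$. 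Therefore every element of $\Tr(a,b)_*$ survives the pruning defining $\Rc_{(a,b)}$, giving $\Tr(a,b)_* \subseteq \Rc_1$ and hence $\Rc_1 \lor \Tr(a,b)_* = \Rc_1$.

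The only mildly subtle step is the pullback computation in the previous paragraph; everything else is a direct unpacking of definitions and the lemmas already proved. Once both conditions are verified, the uniqueness of the join label in the join-semidistributive lattice $\Trs(L)$ (see e.g.\ the discussion preceding the statement) identifies $\Tr(a,b)$ as the join label of $\Rc_1 \lessdot \Rc$.
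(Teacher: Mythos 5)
Your proof is correct. The overall skeleton is the same as the paper's: verify the two defining conditions $\Rc_1 \lor \Tr(a,b) = \Rc$ and $\Rc_1 \lor \Tr(a,b)_* = \Rc_1$, then invoke join-irreducibility of $\Tr(a,b)$ and uniqueness of the join label. The first condition is handled identically. For the second condition you take a genuinely different route: the paper argues by contradiction, observing that if $\Rc_1 \lor \Tr(a,b)_* = \Rc$ then $(a,b)$ would lie in the transitive closure $\bigl(\Rc_1 \cup \Tr(a,b)_*\bigr)^{tc}$, and since $(a,b)$ is a cover relation in $\Rc$ (by \cref{lem:cov_relations}) it cannot decompose non-trivially, forcing $(a,b) \in \Rc_1$ or $(a,b) \in \Tr(a,b)_*$, both impossible. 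You instead prove the equivalent inclusion $\Tr(a,b)_* \subseteq \Rc_1$ directly, using the identification $\Rc_1 = \Rc_{(a,b)}$ from \cref{lem:cov_relations} together with the explicit description of $\Tr(a,b)$ from \cref{lem:smallest_ts}: the pullback check that $(a,b) \sqsubset (a\land c, c)$ forces $b \leq c$, hence $c=b$, is correct given the convention that $(a,b)\sqsubset(u,v)$ requires $b\leq v$. Your version makes the containment $\Tr(a,b)_*\subseteq\Rc_1$ explicit and avoids the decomposition argument, at the cost of leaning more heavily on the equivalence of items 2 and 3 of \cref{lem:cov_relations}; the paper's version only uses that $(a,b)$ is a cover relation in $\Rc$. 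Both are complete.
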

\begin{proof}
By \cref{lem:cov_relations}, there is a unique non-trivial relation $(a,b) \in \Rc \cap \, ^{\boxslash} \Rc_1$. Hence $\Tr(a,b) \subseteq \Rc$ and we have \[ \Rc_1 \subseteq \Rc_1 \lor \Tr(a,b) \subseteq \Rc.\] Since $\Rc_1 \subseteq \Rc$ is a cover relation, we have $\Rc_1 \lor \Tr(a,b)  = \Rc_1$ or $\Rc_1 \lor \Tr(a,b)  =\Rc$. As $(a,b) \notin \Rc_1$, we have $\Rc_1 \lor \Tr(a,b) = \Rc$. Let us recall that $\Tr(a,b)_* = \Tr(a,b)\setminus \{a,b\}$ and we have similarly,
\[
\Rc_1 \subseteq \Rc_1 \lor \Tr(a,b)_* \subseteq \Rc. 
\]
If $\Rc_1 \lor \Tr(a,b)_*  = \Rc$ we have $(a,b) \in \Rc_1 \lor \Tr(a,b)_*$. By the second item of \cref{lem:cov_relations} the relation $(a,b)$ is a cover relation in $\Rc$, hence we have $(a,b) \in \Rc_1$ or $(a,b) \in \Tr(a,b)_*$. This is a contradiction in both cases, hence we have $\Rc_1 =  \Rc_1 \lor \Tr(a,b)_*$.  Since $\Tr(a,b)$ is join-irreducible by \cref{pro:join-irreducible}, it is the join label of $\Rc_1 \lessdot \Rc$.
\end{proof}
Of course, we will simply label it as $(a,b)$ instead of $\Tr(a,b)$, with the cover relation $\Rc_1 \lessdot \Rc$.  

By duality if $L$ is a meet-semidistributive lattice and $x\lessdot y$ is a cover relation in $L$, there is a \emp{unique meet-irreducible} $m \in M$ such that 
\[
y \land m = x \hbox{ and } y \land m^* = y.
\]
This meet-irreducible element is called the \defn{meet label} of $x\lessdot y$. 
When the lattice is semidistributive, it has the two labels. Hence we can define the \defn{map $\kappa$} from the set of join-irreducible elements to the set of meet-irreducible elements of $L$ by setting $\kappa(j)$ as the meet label of the cover relation $j_* \lessdot j$. Dually we define the \defn{map $\kappa^d$} by setting for a $m$ meet-irreducible element $\kappa^d(m)$ as the join label of the cover relation $m \lessdot m^*$.

\begin{proposition}
Let $L$ be a finite semidistributive lattice. Then $\kappa$ and $\kappa^d$ are two inverse bijections between the set of join-irreducible elements and the set of meet-irreducible elements of $L$. 
\end{proposition}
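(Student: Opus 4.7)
The plan is to verify directly that $\kappa^d \circ \kappa$ is the identity on the set of join-irreducibles; the equality $\kappa \circ \kappa^d = \mathrm{id}$ on meet-irreducibles will then follow by the dual argument, swapping the roles of joins and meets and invoking meet-semidistributivity instead of join-semidistributivity.

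First I fix a join-irreducible $j$ and set $m := \kappa(j)$, so that $m$ is the meet label of the cover $j_* \lessdot j$. By definition this gives the two identities $j \land m = j_*$ and $j \land m^* = j$. From them I extract three usable facts: $j \leq m^*$, $j_* \leq m$, and $j \not\leq m$ (the last holds because $j \leq m$ would force $j \land m = j$, contradicting $j_* < j$).

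The key step is then to verify that $j$ itself satisfies the defining conditions of the join label of the cover $m \lessdot m^*$. Combining $j \leq m^*$ with $j \not\leq m$, the chain $m < m \lor j \leq m^*$ together with the cover relation $m \lessdot m^*$ forces $m \lor j = m^*$. The fact $j_* \leq m$ immediately gives $m \lor j_* = m$. Since $j$ is already known to be join-irreducible, the uniqueness of the join label (recalled just before the proposition) yields $\kappa^d(m) = j$, i.e.\ $\kappa^d(\kappa(j)) = j$.

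The argument is entirely mechanical once the correct identities are written down, so I do not expect a real obstacle. The only mild point of vigilance is to remember that $\kappa$ and $\kappa^d$ take values in the correct sets of irreducibles, so that the uniqueness statement for join and meet labels really does apply to the element one has produced. The statement then follows from this together with its dual, which goes through verbatim because the hypothesis of semidistributivity is self-dual.
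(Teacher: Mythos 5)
Your argument is correct and complete: from $j\land m=j_*$ and $j\land m^*=j$ you correctly extract $j\le m^*$, $j_*\le m$ and $j\not\le m$, and the cover $m\lessdot m^*$ then forces $m\lor j=m^*$ while $m\lor j_*=m$, so the uniqueness of the join label gives $\kappa^d(\kappa(j))=j$; the dual computation is indeed symmetric. The paper does not prove this proposition itself but only cites the literature, and what you have written is precisely the standard short argument one finds there, so nothing further is needed.
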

\begin{proof}
See for example \cite[Proposition 9.2]{thomas2021introduction}. 
\end{proof}
\subsection{Elevating graphs and elevating sets}
By \cref{pro:anti-iso} and \cref{lem:duality} the \emp{meet-irreducible transfer systems} of $L$ are of the form $\Ls(a,b)^{\boxslash}$. Indeed, for all $a<b$ in $L$, $(\Ls(a,b))^{op}= \Tr(b,a)$ is join-irreducible in the lattice of transfer systems on $L^{op}$, hence $\Ls(a,b)$ is meet-irreducible in the lattice of left saturated sets of $L$, and $\Ls(a,b)^* = \Ls(a,b)\setminus \{(a,b)\}$ (remember that the order of $\Lss(L)$ is the reverse inclusion). Using the isomorphic map $-^{\boxslash}$, it follows that $\Ls(a,b)^{\boxslash}$ is meet-irreducible in the lattice of transfer systems of $L$, and $(\Ls(a,b)^{\boxslash})^* =(\Ls(a,b)^*)^{\boxslash}$. 

\begin{proposition}\label{pro:kappa}
Let $(L,\leq)$ be a finite lattice and $(a,b) \in \Rel^*(L)$. Then \[ \kappa(\Tr(a,b)) = \Ls(a,b)^{\boxslash} = (a,b)^{\boxslash}.\] 
\end{proposition}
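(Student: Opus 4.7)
The plan is to verify directly that $m := \Ls(a,b)^{\boxslash}$ satisfies the two defining conditions of the meet label of the cover $\Tr(a,b)_* \lessdot \Tr(a,b)$, namely $\Tr(a,b) \cap m = \Tr(a,b)_*$ and $\Tr(a,b) \cap m^* = \Tr(a,b)$. By \cref{pro:wfs_lattice} the meet in $\Trs(L)$ is the intersection of relations, and the meet-irreducibility of $\Ls(a,b)^{\boxslash}$ together with the identification $m^* = (\Ls(a,b)^*)^{\boxslash}$ has been recorded in the paragraph preceding the statement. The third equality $\Ls(a,b)^{\boxslash} = (a,b)^{\boxslash}$ comes from the standard fact that, for any single morphism $g$, the class $^{\boxslash}\{g\}$ is left saturated (pushouts, compositions and transfinite compositions of lifters lift); hence $(a,b) \boxslash g$ already forces $f \boxslash g$ for every $f \in \Ls(a,b)$.

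It then remains to verify (i) $\Tr(a,b) \cap \Ls(a,b)^{\boxslash} = \Tr(a,b)_*$ and (ii) $\Tr(a,b) \subseteq (\Ls(a,b)^*)^{\boxslash}$. Both are short case analyses based on the explicit description $\Tr(a,b) = \{(a \wedge c, c) : c \leq b\} \cup \Delta(L)$ from \cref{lem:smallest_ts}(2) and its dual $\Ls(a,b) = \{(w, b \vee w) : a \leq w\} \cup \Delta(L)$. For (i): diagonals always lift against $(a,b)$; the pair $(a,b)$ does not lift against itself since the identity square would require $b \leq a$; and every other non-diagonal element of $\Tr(a,b)$ has the shape $(a \wedge c, c)$ with $c < b$, so that no commutative square from $(a,b)$ to such a relation can exist (the bottom map would force $b \leq c$) and the lift holds vacuously. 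For (ii): any non-diagonal element $(w, b \vee w) \in \Ls(a,b)^*$ satisfies $a \leq w$ and $w \neq a$, and a commutative square from $(w, b\vee w)$ to any non-diagonal $(a \wedge c, c) \in \Tr(a,b)$ would force $w \leq a \wedge c \leq a$, hence $w = a$, contradicting $w \neq a$; the case of the top element $(a,b) \in \Tr(a,b)$ is analogous.

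Uniqueness of the meet label in a semidistributive lattice (\cref{thm:semidistributive}) then yields $\kappa(\Tr(a,b)) = \Ls(a,b)^{\boxslash}$. The step requiring the most care is condition (i): one must show that the only obstruction to $(a,b) \boxslash -$ on $\Tr(a,b)$ sits at $(a,b)$ itself, which is exactly the relation that distinguishes $\Tr(a,b)$ from $\Tr(a,b)_*$. Once that identity square is isolated as the unique non-vacuous lifting test, the verification of (ii) runs in parallel with the roles of source and target interchanged, the condition $w \neq a$ on $\Ls(a,b)^*$ playing the role of $c \neq b$ on $\Tr(a,b)_*$.
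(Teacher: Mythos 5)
Your proposal is correct and follows the same overall strategy as the paper: identify $\kappa(\Tr(a,b))$ by exhibiting $\Ls(a,b)^{\boxslash}=(a,b)^{\boxslash}$ as the meet-irreducible satisfying the two defining conditions of the meet label of the cover $\Tr(a,b)_*\lessdot\Tr(a,b)$, and then invoke uniqueness of that label. The difference lies in how the two conditions are verified. The paper obtains $\Tr(a,b)\cap(a,b)^{\boxslash}=\Tr(a,b)_*$ structurally, by first deducing $\Tr(a,b)_*\subseteq(a,b)^{\boxslash}$ from \cref{lem:cov_relations} and then squeezing along the cover relation, and it proves $\Ls(a,b)^*\subseteq\,^{\boxslash}(a,b)$ by completing a hypothetical lifting square with the defining pushout of $\Ls(a,b)$ to force $(x,y)=(a,b)$. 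You instead verify both conditions by a direct case analysis on the explicit formulas $\Tr(a,b)=\{(a\land c,c)\ |\ c\leq b\}\cup\Delta(L)$ and its dual for $\Ls(a,b)$, isolating the identity square on $(a,b)$ as the unique non-vacuous lifting test; this is more elementary and avoids \cref{lem:cov_relations} entirely, at the cost of being tailored to these particular join- and meet-irreducibles rather than to general covers. Your derivation of $\Ls(a,b)^{\boxslash}=(a,b)^{\boxslash}$ from the left-saturation of $\,^{\boxslash}\{g\}$ together with the minimality of $\Ls(a,b)$ is likewise a clean abstract replacement for the paper's explicit pushout diagram. The only (harmless) omission is that in checking $\Tr(a,b)\subseteq(\Ls(a,b)^*)^{\boxslash}$ you should also record that squares with a diagonal target always admit a lift; this is immediate and does not affect correctness.
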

\begin{proof}
First we prove that $\Ls(a,b)^{\boxslash} = (a,b)^{\boxslash}$. Since $(a,b) \in \Ls(a,b)$ we clearly have $\Ls(a,b)^{\boxslash} \subseteq (a,b)^{\boxslash}$. Conversely let $(x,y) \in (a,b)^{\boxslash}$. The elements of $\Ls(a,b)$ are obtained by pushout diargams like
\[
\xymatrix{
a \po \ar[r] \ar[d] & c \ar[d]\\
b \ar[r] & b\lor c.
}
\]
Hence if there is a commutative square involving $(c,b\lor c)$ and $(x,y)$ we can complete it as
\[
\xymatrix{
a \po \ar[r] \ar[d] & c \ar[r] \ar[d] & x \ar[d] \\
b \ar[r] & b\lor c \ar[r] & y.
}
\]
Since $(x,y) \in (a,b)^{\boxslash}$, we have $b\leq x$, hence we have $b\lor c \leq x$, so $(x,y) \in \Ls(a,b) ^{\boxslash}$. By duality, we have $\, ^{\boxslash}\Tr(a,b) = \, ^{\boxslash}(a,b)$. 

It follows that $\Tr(a,b) \cap \Ls(a,b)^{\boxslash} = \Tr(a,b) \cap (a,b) ^{\boxslash} \subseteq \Tr(a,b)$. By \cref{lem:cov_relations} we have $\Tr(a,b) \cap \, ^{\boxslash} \Tr(a,b)_* = \{(a,b)\} \cup \Delta(L)$. Hence $(a,b) \in \,^{\boxslash}\Tr(a,b)_*$, so we have $\Tr(a,b)_* \subseteq (a,b)^{\boxslash}$. In other words we have:
\[
\Tr(a,b)_* \subseteq \Tr(a,b) \cap (a,b)^{\boxslash} \subseteq \Tr(a,b).
\]
Since $(a,b) \notin  \Tr(a,b) \cap (a,b)^{\boxslash}$ and $\Tr(a,b)_* \subseteq \Tr(a,b)$ is a cover relation, we have \[ \Tr(a,b)_* = \Tr(a,b) \cap (a,b)^{\boxslash}.\] 
It remains to prove that $\Tr(a,b) \subseteq (\Ls(a,b)^{*})^{\boxslash}$. This is equivalent to proving $\Ls(a,b)^* \subseteq \,^{\boxslash}(a,b)$. Let $(x,y) \in \Ls(a,b)$ and consider a commutative diagram
\[
\xymatrix{
x \ar[r] \ar[d] & a\ar[d] \\
y \ar[r] & b
}
\]
Since $(x,y) \in \Ls(a,b)$, we can complete it as
\[
\xymatrix{
a \po \ar[r] \ar[d] & x \ar[r] \ar[d] & a\ar[d] \\
b \ar[r] & y \ar[r] & b,}
\]
so $(x,y) = (a,b)$. Hence if $(x,y) \in \Ls(a,b)^*$ there is no commutative diagram involving $(x,y)$ and $(a,b)$, so we have $(x,y) \boxslash (a,b)$. 
\end{proof}
\begin{corollary}
  Let $(L,\leq)$ be a finite lattice. Then the \emp{meet label} of a cover relation $\Rc_1 \lessdot \Rc$ is $(a,b)^{\boxslash}$ where $(a,b)$ is the unique non trivial relation in $\Rc \cap \, ^{\boxslash} \Rc_1$.   
\end{corollary}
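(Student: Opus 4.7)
The plan is to reduce this corollary to \cref{pro:kappa} via the general principle that in a semidistributive lattice the meet label of any cover relation $x\lessdot y$ equals $\kappa$ applied to its join label.

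First, I would apply \cref{lem:cov_relations} to extract the unique non-trivial relation $(a,b)\in \Rc \cap\,^{\boxslash}\Rc_1$, then invoke \cref{lem:join label} to identify the join label of $\Rc_1\lessdot \Rc$ as the join-irreducible $\Tr(a,b)$. Finally \cref{pro:kappa} gives $\kappa(\Tr(a,b)) = (a,b)^{\boxslash}$, and the conclusion follows by the preceding principle.

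The remaining—and only non-routine—step is the auxiliary statement that, in a finite semidistributive lattice, the meet label of a cover $x\lessdot y$ coincides with $\kappa(j)$, where $j$ is its join label. This is standard in the theory of semidistributive lattices (see e.g.\ \cite{thomas2021introduction}), so I would either cite it or verify it directly: from the definition of the join label one has $j_*\leq x$ and $j\not\leq x$, so the characterization of $\kappa(j)$ as the largest element $z$ with $j_*\leq z$ and $j\not\leq z$ gives $x\leq \kappa(j)$; combined with $j\leq y$ this forces $y\wedge\kappa(j)=x$, since the interval $[x,y]$ consists only of $x$ and $y$. The maximality of $\kappa(j)$ also forces $j\leq \kappa(j)^{*}$, whence $y=x\vee j \leq \kappa(j)^{*}$, i.e.\ $y\wedge\kappa(j)^{*}=y$; thus $\kappa(j)$ is indeed the meet label.

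I do not expect a genuine obstacle: the strategy is simply to glue \cref{lem:join label}, \cref{pro:kappa}, and this standard fact about $\kappa$. The cleanest presentation is probably to state the auxiliary principle once as a short lemma preceding the corollary (or to cite it), after which the proof becomes a one-line composition of isomorphisms and labelings already established.
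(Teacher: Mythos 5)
Your proposal is correct and follows essentially the same route as the paper: identify the join label as $\Tr(a,b)$ via \cref{lem:join label}, invoke the general fact that in a finite semidistributive lattice the meet label of a cover relation is $\kappa$ of its join label (the paper cites \cite[Proposition 2.13]{barnard2022exceptional} for this, where you cite \cite{thomas2021introduction} and also sketch a correct direct verification), and conclude with \cref{pro:kappa}. Your extra verification of the auxiliary principle is sound but optional.
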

\begin{proof}
    Since $\Tr(a,b)$ is the join label of $\Rc_1 \lessdot \Rc$ by \cref{lem:join label}, it follows from \cite[Proposition 2.13]{barnard2022exceptional} that the meet-label of $\Rc_1\lessdot \Rc$ is $\kappa(\Tr(a,b))= (a,b)^{\boxslash}$.
\end{proof}

The map $\kappa$ is important for us because of the following result. 

\begin{theorem}[Barnard-Hanson]\label{thm:barnard_hanson}
Let $(L,\leq)$ be a finite semidistributive lattice. There is a bijection between the elements of $L$ and the subsets $S$ of the join-irreducible elements of $L$ such that for every $x,y \in S$ such that $x\neq y$ we have $x \leq \kappa(y)$.
\end{theorem}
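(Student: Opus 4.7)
The plan is to construct the bijection explicitly: to an element $a \in L$ associate its canonical join representation $S(a) \subseteq \jirr(L)$, which exists and is unique by join-semidistributivity, and conversely to a $\kappa$-compatible subset $S$ associate $\bigvee S$. The identity $\bigvee S(a) = a$ immediately yields injectivity of $a \mapsto S(a)$, so the substance of the proof is (i) that $S(a)$ is always $\kappa$-compatible and (ii) that every $\kappa$-compatible $S$ equals $S(\bigvee S)$.

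For (i), I would use the standard description, valid in any finite semidistributive lattice, that the elements of $S(a)$ are exactly the join-labels of the lower covers of $a$, and that the lower cover of $a$ labelled by $j \in S(a)$ is $c_j := \bigvee(S(a) \setminus \{j\})$. Given distinct $j, j' \in S(a)$, we have $j' \leq c_j$. Since $\kappa(j)$ is by definition the meet-label of the same cover $c_j \lessdot a$, one has $c_j = a \wedge \kappa(j) \leq \kappa(j)$, and composing gives $j' \leq \kappa(j)$, proving $\kappa$-compatibility.

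For (ii), let $S$ be $\kappa$-compatible and set $a = \bigvee S$. First I would check that $S$ is irredundant: if $j \leq \bigvee(S \setminus \{j\})$ for some $j \in S$, then since each $x \in S \setminus \{j\}$ satisfies $x \leq \kappa(j)$ we have $\bigvee(S \setminus \{j\}) \leq \kappa(j)$, forcing the contradiction $j \leq \kappa(j)$ (which fails because $j \wedge \kappa(j) = j_* \neq j$). Hence $S$ is an antichain of join-irreducibles with $\bigvee S = a$. To conclude $S = S(a)$, the core step is to establish that for each $j \in S$ the element $c_j := \bigvee(S \setminus \{j\})$ is a lower cover of $a$; the join-label of $c_j \lessdot a$ will then be precisely $j$, identifying $S$ with the set of join-labels of the lower covers of $a$, which is $S(a)$.

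The main obstacle is thus the covering step in (ii): upgrading $c_j < a$ (which follows from irredundance) to $c_j \lessdot a$. Here join-semidistributivity is essential. Given $c_j \leq d < a$, one has $d \vee j = a = c_j \vee j$; applying the join-semidistributive identity $d \vee j = d \vee (j \wedge (\text{something}))$ together with $j$-irreducibility (so that any $j' < j$ lies below $j_* \leq \kappa(j)$) will force $d = c_j$. Once this cover property is secured, uniqueness of the canonical join representation completes the proof, and the two maps $a \mapsto S(a)$ and $S \mapsto \bigvee S$ are mutually inverse bijections.
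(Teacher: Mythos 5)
There is a genuine gap, and it lies exactly where you locate the ``core step''. Note first that the paper does not prove the characterization of $\kappa$-compatible sets itself: it only observes that semidistributivity gives a bijection between elements and their (unique) canonical join representations, and then cites Barnard--Hanson's Theorem C for the statement that the canonical join representations are exactly the $\kappa$-compatible sets. You are therefore attempting to reprove the cited result, which is legitimate, but your plan for direction (ii) rests on a false claim: it is not true that for a $\kappa$-compatible (or canonical) set $S$ with $a=\bigvee S$, the element $c_j=\bigvee(S\setminus\{j\})$ is a lower cover of $a$. Take $L$ to be the (distributive, hence semidistributive) divisor lattice of $12$. The canonical join representation of $12$ is $S=\{3,4\}$, and one checks $\kappa(3)=4$ and $\kappa(4)=6$, so $S$ is $\kappa$-compatible; yet $\bigvee(S\setminus\{4\})=3$ and $3<6<12$, so $c_4$ is not a lower cover of $12$. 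The same example shows that your proposed semidistributivity argument cannot close the gap: with $d=6$ one has $d\vee 4=12=3\vee 4$, and join-semidistributivity only yields $(6\wedge 3)\vee 4=12$, which is true and does not force $d=3$. The lower cover of $a$ attached to the canonical joinand $j$ is $a\wedge\kappa(j)$ (here $12\wedge\kappa(4)=6$), not $\bigvee(S\setminus\{j\})$, and any correct proof of (ii) must work with that element; this is essentially what Barnard--Hanson do.

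The same misidentification appears in part (i), where you assert that the lower cover of $a$ labelled by $j$ is $\bigvee(S(a)\setminus\{j\})$ (false in the example above: the lower cover of $12$ labelled by $4$ is $6$, not $3$). That part is nevertheless repairable: writing $c_j$ for the true lower cover labelled by $j$, one has $a=c_j\vee j$, so the refinement property of the canonical join representation forces each $j'\in S(a)$ to lie below $c_j$ or below $j$, and the antichain property excludes $j'\leq j$ for $j'\neq j$; combined with $c_j=a\wedge\kappa(j)\leq\kappa(j)$ (which, note, is not ``by definition'' of $\kappa$ but is the nontrivial fact that the join label and the meet label of a cover correspond under $\kappa$, i.e.\ Proposition 2.13 of Barnard--Hanson, used elsewhere in this paper) this gives $j'\leq\kappa(j)$. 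So (i) survives after correction, but (ii) as proposed does not.
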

\begin{proof}
Since $L$ is a semidistributive lattice, every element $x \in L$ has a canonical join recomposition $x = \lor A$. If $x = \lor A = \lor B$ for two sets of join-irreducible elements, then $A$ is smaller than $B$ for the refinement order. In particular if $A$ and $B$ are two canonical join representation of the same element they are equal. It follows that there is a bijection between the elements of $L$ and the set of all the canonical join representations of the elements of $L$. By \cite[Theorem C]{barnard2022exceptional}, a subset $S$ of join-irreducible elements is a canonical join representation if and only if it satisfies the condition of the theorem. 
\end{proof}
\begin{definition}
Let $(L,\leq)$ be a finite lattice. Then a subset $S\subseteq \Rel^*(L)$ is an \defn{elevating set} if $\forall (a,b) \neq (c,d) \in S$ we have $(a,b) \boxslash (c,d)$. 
\end{definition}
We are now ready to prove \cref{thm:semibricks}. 
\begin{proof}[Proof of \cref{thm:semibricks}]
This is a reformulation of \cref{thm:barnard_hanson}  for the lattice of transfer systems of $L$ where we use \cref{pro:kappa} and \cref{pro:join-irreducible}. Then join-irreducibles are $\Tr(a,b)$ for $(a,b) \in \Rel^*(L)$ and the condition $\Tr(a,b) \subseteq \kappa(\Tr(c,d))$ becomes $\Tr(a,b) \subseteq (c,d)^{\boxslash}$ which is equivalent to $(a,b) \in (c,d)^{\boxslash}$ and equivalent to $(c,d) \boxslash (a,b)$. 
\end{proof}
The set of canonical join representations is a simplicial complex and it has been proved by Barnard in \cite[Theorem 1.2]{barnard_canonical}  that it is a \emp{flag complex}. This means that this is the clique complex of its $1$-skeleton. For transfer systems, using \cref{pro:kappa} and \cref{pro:join-irreducible} this $1$-skeleton is isomorphic to the following graph:
\begin{definition}
Let $(L,\leq)$ be a finite lattice. The \defn{elevating graph} of $L$ is the graph where the vertices are $\Rel^*(L)$ the non-trivial relations of $L$. There is an edge between $(a,b)$ and $(c,d)$ if and only if $(a,b) \boxslash (c,d)$ and $(c,d) \boxslash (a,b)$. 
\end{definition}

The proof of \cref{thm:cliques} follows. Indeed, there are as many transfer systems as there are canonical join representations. The canonical complex is the clique complex of the elevating graph, so there are as many elements in this complex as there are cliques in the elevating graph. We refer to \cref{sec:examples} for illustrations in various examples.  

 \section{Congruences}

Recall that an equivalence $\equiv$ on a lattice $L$ is called a \defn{congruence} if it has the property: for any elements $x_1, x_2, y_1$ and $y_2$ in $L$ such that $x_1 \equiv y_1$ and $x_2 \equiv y_2$, then also $x_1 \wedge x_2 \equiv y_1 \wedge y_2$ and $x_1 \vee x_2 \equiv y_1 \vee y_2$. We denote by $\Con(L)$ the set of congruences on $L$, partially ordered by the refinement ($\theta_1 \leq \theta_2$ if and only if $\forall x,y \in L$, $x \equiv_{\theta_1} y$ implies $x\equiv_{\theta_2} y$). It is well known that the lattice $\Con(L)$ is distributive \cite{funayama}.  It follows from the Birkhoff theorem that $\Con(L) = \ideal(P)$ where $P$ is the poset of \emp{join-irreducible congruences}. 

Given a congruence $\equiv$ on $\Con(L)$, we say that $\equiv$ contracts an arrow $q: x\rightarrow y$ ($x\lessdot y$) of the Hasse diagram of $L$ whenever $x\equiv y$. We define $\con(q)$ to be the smallest congruence relation contracting $q$. That is, $\con(q)$ is the meet of all congruences contracting $q$. By \cite[Proposition 9.5.14]{gratzerlattice}, the map sending a join-irreducible element $j\in L$ to $\con(j_* \to j)$ induces a surjective map from the set of join-irreducible elements of $L$ to the set of join-irreducible congruences of $L$. Similarly, the map sending a meet irreducible element $m$ to $\con(m\to m^*)$ induces a surjective map from the set of meet-irreducible elements of $L$ to the set of meet-irreducible congruences of $L$. 

\begin{definition}
A finite lattice $(L,\leq)$ is \defn{congruence uniform} if the maps $j \mapsto \con(j_*\to j)$ and $m\mapsto \con(m\to m^*)$ are bijective. 
\end{definition}
By \cite{day} a congruence uniform lattice is always semidistributive, and conversely knowing that a lattice is semdisitrbutive can be used to simplify the proof of the congruence uniformity. 
\begin{lemma}\label{lem:reduction}
Let $(L,\leq)$ be a finite lattice. If $L$ is semidistributive and the map $j \mapsto \con(j_* \to j)$ from the set of join-irreducible elements of $L$ to the set of join-irreducible congruences is injective, then $L$ is congruence uniform. 
\end{lemma}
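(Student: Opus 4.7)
The strategy is to bootstrap: the surjectivity of $m \mapsto \con(m\to m^*)$ is already a consequence of \cite[Proposition~9.5.14]{gratzerlattice}, so the real content is to upgrade the assumed injectivity of $j \mapsto \con(j_* \to j)$ to the meet-irreducible side. The bridge between the two sides is the bijection $\kappa$ available in any finite semidistributive lattice, together with the identity
\[
\con(j_* \to j) \;=\; \con(\kappa(j) \to \kappa(j)^*)
\]
for every join-irreducible $j$.

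First I would invoke the semidistributivity hypothesis to obtain the bijection $\kappa$ between join-irreducibles and meet-irreducibles recalled in \cref{sec:canonical_join}, along with its inverse $\kappa^d$. In particular, for any join-irreducible $j$, the cover relation $j_* \lessdot j$ has meet label $\kappa(j)$, and the cover relation $\kappa(j) \lessdot \kappa(j)^*$ has join label $\kappa^d(\kappa(j)) = j$.

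The main (and only non-formal) step is to establish the displayed identity. This is the standard fact that in a finite semidistributive lattice the principal congruence generated by a cover depends only on its join (equivalently, meet) label: indeed, applying semidistributivity to the squares appearing in the forcing relation between cover relations shows that whenever two covers share a join label, each is forced by the other, hence they generate the same principal congruence. Since $j_* \lessdot j$ and $\kappa(j) \lessdot \kappa(j)^*$ both carry the join label $j$ (and the meet label $\kappa(j)$), they generate the same principal congruence, proving the identity. I expect this step to be the main obstacle, in the sense that it is the only place where semidistributivity is used in an essential, non-cosmetic way; all standard references for congruence-uniform/semidistributive lattices (e.g.\ \cite{thomas2021introduction}) contain such a statement, so one could either quote it or sketch the two-line forcing argument.

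Once the identity is in hand, the conclusion is a formal manipulation. Suppose $m_1, m_2$ are meet-irreducibles of $L$ with $\con(m_1\to m_1^*) = \con(m_2 \to m_2^*)$. Set $j_i := \kappa^{-1}(m_i)$ so that $m_i = \kappa(j_i)$. Applying the identity on each side yields
\[
\con((j_1)_* \to j_1) \;=\; \con(m_1 \to m_1^*) \;=\; \con(m_2 \to m_2^*) \;=\; \con((j_2)_* \to j_2).
\]
The hypothesis that $j \mapsto \con(j_*\to j)$ is injective then forces $j_1 = j_2$, and therefore $m_1 = m_2$. Combined with the surjectivity recalled above, this shows that both maps $j \mapsto \con(j_* \to j)$ and $m \mapsto \con(m \to m^*)$ are bijective, i.e.\ $L$ is congruence uniform.
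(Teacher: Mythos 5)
Your proof is correct. Note that the paper does not actually prove this lemma --- it only points to paragraph 7.2 of \cite{FTSDL} --- so there is no in-paper argument to compare against; your write-up supplies the details along what is essentially the standard route. The one substantive step, the identity $\con(j_*\to j)=\con\big(\kappa(j)\to\kappa(j)^*\big)$, is exactly the statement that two cover relations with the same join label are forcing equivalent, and this is proved explicitly for any finite semidistributive lattice as the first half of \cref{prop:forcing_equiv} later in the paper: if a congruence contracts a cover $x\lessdot y$ with join label $j$, then $j=y\wedge j\equiv x\wedge j=j_*$, and consequently $y'=x'\vee j\equiv x'\vee j_*=x'$ for any other cover $x'\lessdot y'$ carrying the same label. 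Since the cover $\kappa(j)\lessdot\kappa(j)^*$ has join label $\kappa^d(\kappa(j))=j$, the identity follows, and your transfer of injectivity from the join side to the meet side via the bijection $\kappa$, combined with the surjectivity statements quoted from Gr\"atzer, closes the argument. The only cosmetic remark is that your gloss ``applying semidistributivity to the squares appearing in the forcing relation'' slightly misplaces where semidistributivity enters: it is needed to guarantee the existence and uniqueness of the join and meet labels (and hence of $\kappa$ and $\kappa^d$), after which the forcing computation above is pure label arithmetic valid in any finite lattice.
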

\begin{proof}
See the paragraph 7.2 of \cite{FTSDL}. 
\end{proof}
For two arrows $q$ and $q'$ in the Hasse diagram of $L$, we say that $q$ and $q'$ are \defn{forcing equivalent} if $\con(q) = \con(q')$. Moreover, there is a preorder on the set of all arrows in the Hasse diagram, called the \defn{forcing preorder}. It is defined by 
\[
q \rightsquigarrow q' \leftrightarrow \con(q) \geq \con(q') \hbox{ in the lattice}  \Con(L).
\]

\subsection{Congruence uniformity}\label{sec:congruence_uniform}
 
Let $(L, \leq)$ be a finite lattice and $\Rc_1 \subseteq \Rc$ be two transfer systems on $\Trs(L)$. We denote by $\Rel^{*}(\Rc \cap ^{\boxslash}\Rc_{1})$ and $\jirr[\Rc_1,\Rc]$ the set of non-trivial relations in $\Rc \cap ^{\boxslash}\Rc_{1}$ and the set of elements which are join-irreducible in the interval $[\Rc_1, \Rc]$, respectively. Then we have the following: 
\begin{lemma}\label{lem:bij_interval_rel}
There is a bijection $\Psi: \jirr[\Rc_1,\Rc] \rightarrow \Rel^{*}(\Rc \cap \, ^{\boxslash}\Rc_1)$ mapping $\Rc^{'}\in \jirr[\Rc_1. \Rc]$ to $(a, b)$, where $(a,b)$ is the join label of $\Rc_{*}^{'} \lessdot \Rc^{'}$. Moreover, we have $\Rc^{'}= \Rc_1 \vee \Tr(a,b)$ and $\Rc_{*}^{'}= \Rc' \wedge (a,b)^{\boxslash}$.
\end{lemma}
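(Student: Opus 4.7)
The plan is to show $\Psi$ is a bijection by first establishing the two explicit formulas (which immediately give injectivity) and then proving surjectivity by directly constructing a preimage. For well-definedness: given $\Rc' \in \jirr[\Rc_1, \Rc]$, its unique lower cover $\Rc'_*$ in the interval remains a cover relation $\Rc'_* \lessdot \Rc'$ in $\Trs(L)$, because intervals preserve cover relations. By \cref{lem:cov_relations}, the set $\Rc' \cap {}^{\boxslash}\Rc'_*$ contains a unique non-trivial relation $(a,b)$; by \cref{lem:join label} this $(a,b)$ is precisely the join label of the cover $\Rc'_* \lessdot \Rc'$. The inclusion $\Rc_1 \subseteq \Rc'_*$ forces ${}^{\boxslash}\Rc'_* \subseteq {}^{\boxslash}\Rc_1$, so $(a,b) \in \Rc \cap {}^{\boxslash}\Rc_1$ as required.

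To establish the formulas, combine $\Rc' = \Rc'_* \vee \Tr(a,b)$ (the join label property in $\Trs(L)$) with $\Rc_1 \subseteq \Rc'_*$ to get $\Rc_1 \vee \Tr(a,b) \subseteq \Rc'$. The relation $(a,b)$ belongs to $\Rc_1 \vee \Tr(a,b)$ but not to $\Rc'_*$ (by \cref{lem:step1}(1), since $(a,b) \in {}^{\boxslash}\Rc'_*$ is non-trivial); because $\Rc'$ covers $\Rc'_*$ in the interval, every proper subset of $\Rc'$ containing $\Rc_1$ must lie inside $\Rc'_*$, so the inclusion is an equality and $\Rc' = \Rc_1 \vee \Tr(a,b)$. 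The identity $\Rc'_* = \Rc' \wedge (a,b)^{\boxslash}$ is exactly the meet-label identity from the corollary to \cref{pro:kappa}, which computes $\kappa(\Tr(a,b)) = (a,b)^{\boxslash}$. Injectivity of $\Psi$ follows at once from $\Rc' = \Rc_1 \vee \Tr(\Psi(\Rc'))$.

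For surjectivity, given $(a,b) \in \Rel^*(\Rc \cap {}^{\boxslash}\Rc_1)$, set $\Rc' := \Rc_1 \vee \Tr(a,b)$ and $\Rc'_* := \Rc' \wedge (a,b)^{\boxslash}$. The easy inclusions $\Rc_1 \subseteq \Rc'_* \subsetneq \Rc' \subseteq \Rc$ follow from $\Rc_1 \subseteq (a,b)^{\boxslash}$ (equivalent to $(a,b) \in {}^{\boxslash}\Rc_1$), $\Tr(a,b) \subseteq \Rc$, and $(a,b) \notin (a,b)^{\boxslash}$ (as $a \neq b$). The remaining task is to prove $\Rc'_* \lessdot \Rc'$ in $\Trs(L)$ with join label $(a,b)$, which via \cref{lem:cov_relations}(3) reduces to showing that $\Rc' \cap {}^{\boxslash}\Rc'_*$ has $(a,b)$ as its unique non-trivial element. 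Then this cover relation also realizes $\Rc'_*$ as the unique lower cover of $\Rc'$ in $[\Rc_1, \Rc]$.

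This uniqueness is the main obstacle and will be handled by contradiction. Suppose $(c,d) \in \Rc' \cap {}^{\boxslash}\Rc'_*$ is non-trivial with $(c,d) \neq (a,b)$. Since $(c,d) \notin \Rc'_* \supseteq \Rc' \cap (a,b)^{\boxslash}$, one has $(a,b) \not\boxslash (c,d)$, which yields $a \leq c$, $b \leq d$, and $b \not\leq c$. Writing $(c,d)$ as a shortest composition $c = c_0 \to \cdots \to c_n = d$ of non-trivial relations in $\Rc_1 \cup \Tr(a,b)$, a case analysis on the last step $(c_{n-1},d)$ will produce the contradiction. If $(c_{n-1},d) \in \Rc_1$, then the hypothesis $(c,d) \in {}^{\boxslash}\Rc_1$ forces $(c,d) \boxslash (c_{n-1},d)$, demanding $d \leq c_{n-1} < d$, impossible. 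If $(c_{n-1},d) \in \Tr(a,b)$, then $c_{n-1} = a \wedge d$ and $d \leq b$; combined with $b \leq d$ this forces $d = b$ and $c_{n-1} = a$, after which $a \leq c \leq c_{n-1} = a$ collapses the chain to $c = a$ and hence $(c,d) = (a,b)$, a contradiction. This combinatorial dissection of the transitive closure $(\Rc_1 \cup \Tr(a,b))^{tc}$ is the most delicate ingredient of the argument.
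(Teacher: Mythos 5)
Your overall architecture is sound and runs parallel to the paper's (well-definedness via the join label, the two formulas, injectivity from $\Rc'=\Rc_1\vee\Tr(a,b)$, surjectivity by constructing $\Rc'$ and $\Rc'_*$ explicitly), and your combinatorial dissection of $(\Rc_1\cup\Tr(a,b))^{tc}$ in the last paragraph is correct: the two cases on the final factor $(c_{n-1},d)$ do force $(c,d)=(a,b)$, so $\Rc'\cap{}^{\boxslash}\Rc'_*$ has $(a,b)$ as its only non-trivial element and \cref{lem:cov_relations} gives the cover $\Rc'_*\lessdot\Rc'$ in $\Trs(L)$ with join label $(a,b)$.

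The gap is your closing sentence of the surjectivity step: ``Then this cover relation also realizes $\Rc'_*$ as the unique lower cover of $\Rc'$ in $[\Rc_1,\Rc]$.'' This is exactly what surjectivity needs (it is what puts $\Rc'$ into the domain $\jirr[\Rc_1,\Rc]$), and it does not follow from merely exhibiting one cover $\Rc'_*\lessdot\Rc'$: a priori $\Rc'$ could have several lower covers lying in the interval, in which case it would not be join-irreducible there. The paper avoids this by proving the stronger statement that \emph{every} transfer system $X$ with $\Rc_1\subseteq X\subsetneq\Rc'$ satisfies $X\subseteq(a,b)^{\boxslash}$, hence $X\subseteq\Rc'_*$; your dissection argument only applies to elements of ${}^{\boxslash}\Rc'_*$ and does not transfer to an arbitrary such $X$. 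The gap is closable with the machinery you already have: if $Y\lessdot\Rc'$ with $\Rc_1\subseteq Y$, then $(a,b)\notin Y$ (otherwise $\Tr(a,b)\subseteq Y$ and $\Rc'=\Rc_1\vee\Tr(a,b)\subseteq Y$); since $(a,b)$ is the unique element of $\Rc'^{c}\setminus\Rc'_*$ by \cref{lem:cov_relations}, in particular $(a,b)\in\Rc'^{c}$, so $(a,b)\in\Rc'^{c}\setminus Y$ and \cref{lem:cov_relations} forces $Y=\Rc'_{(a,b)}=\Rc'_*$. Finiteness then gives that every element of $[\Rc_1,\Rc']$ strictly below $\Rc'$ lies below $\Rc'_*$. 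You should add this (or switch to the paper's direct argument) before concluding.
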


\begin{proof}
Let $\Rc^{'}$ be a join-irreducible transfer system in the interval $[\Rc_1, \Rc]$. Then $\Rc^{'}$ covers a unique transfer system $\Rc'_*$ in this interval. By \cref{lem:join label}, the join-label $(a,b)$ of this cover relation is the unique non-trivial relation in $\Rc' \cap \, ^{\boxslash}\Rc'_{*}$. Since $\Rc' \subseteq \Rc$ and $\, ^{\boxslash}\Rc'_{*} \subseteq \,^{\boxslash}\Rc_1$, we have $(a,b) \in \Rc \cap \, ^{\boxslash}\Rc_{1}$ and the map $\Psi$ is well defined.

Next, we prove that the map $\Psi$ is injective. If there is another join-irreducible transfer system $\Rc^{''} \in [\Rc_1, \Rc]$ such that $\Psi(\Rc^{''}) = (a,b)$. Denote by $\Rc_{*}^{''}$ the unique transfer system such that $\Rc_{*}^{''} \lessdot \Rc^{''}$. Then $(a,b)$ is also the unique non-trivial relation in $\Rc^{''} \cap \, ^{\boxslash}\Rc_{*}^{''}$. Hence, $(a,b) \in \Rc' \cap \Rc''$. Moreover, the transfer system $\Rc'\cap \Rc''$ is in the interval $[\Rc_1, \Rc]$ and it satisfies $\Rc' \cap \Rc'' \subseteq \Rc'$. Since $\Rc'$ is join-irreducible in this interval, we have $\Rc' \cap \Rc'' = \Rc'$ or $\Rc' \cap \Rc'' \subseteq \Rc'_*$. Since $(a,b) \notin \Rc'_*$, we have $\Rc' \cap \Rc'' = \Rc'$, so $\Rc'\subseteq \Rc''$. By symmetry, we also obtain the other inclusion, and $\Rc'= \Rc''$.

It remains to prove that $\Psi$ is surjective. Let $(a,b)\in \Rel^{*}(\Rc \cap \, ^{\boxslash}\Rc_1)$ and $\Rc^{'}= \Rc_1 \vee \Tr(a,b)$.  As $(a,b)\in \Rc$ and $\Rc_1 \subseteq \Rc$, it is immediate that $\Rc_1 \subseteq \Rc^{'} \subseteq \Rc$. Let also $\Rc_{*}^{'}= \Rc^{'} \wedge (a,b)^{\boxslash}$. As $(a,b) \in \, ^{\boxslash}\Rc_{1}$, equivalently, $\Rc_{1} \subseteq (a,b)^{\boxslash}$. So $\Rc_1 \subseteq \Rc_*' \subseteq \Rc$. Moreover, by construction we have $\Rc'_* \subseteq \Rc'$ and since $(a,b) \in \Rc'$ and $(a,b)\notin \Rc'_*$, we have 

\[ \Rc_1 \subseteq \Rc_{*}^{'} \subsetneq \Rc^{'}. \]  

Let $X$ be a transfer system such that $\Rc_1 \subseteq X \subsetneq \Rc^{'}$. We will prove that $X\subseteq \Rc'_*$. Since $X \subseteq \Rc_{*}^{'}$ if and only if $X \subseteq (a,b)^{\boxslash}$, to prove $X\subseteq \Rc_{*}^{'}$, we only need to prove $X \subseteq (a,b)^{\boxslash}$. Note that $(a,b)\notin X$, otherwise we get a contradiction that $\Rc^{'}= \Rc_1 \vee \Tr(a,b) \subseteq \Rc_1 \vee X = X$. 

Let $(x,y)\in X$. In particular $(x,y) \in \Rc_1 \lor \Tr(a,b)$, so the relation $(x,y)$ is obtained by transitivity from relations in $\Rc_1$ and $\Tr(a,b)$. Note that $\Rc_1 \subseteq (a,b)^{\boxslash}$ and $\Tr(a,b)_* \subseteq (a,b)^{\boxslash}$. Hence, $\Rc_1 \lor \Tr(a,b)_* \subseteq (a,b)^{\boxslash}$. So if all the relations appearing in the decomposition of $(x,y)$ are in $\Rc_1$ or in $\Tr(a,b)_*$, we obtain that $(x,y) \in (a,b)^\boxslash$. The only relation of $\Tr(a,b)$ which is not in $\Tr(a,b)_*$ is $(a,b)$, so let us assume that it appears in $(x,y)$, that is we have $(x,y) = x\to a \to b \to y$. If $x = a$, then by the pullback property of $X$, we have $(a,b) \in X$ and this is a contradiction. So we have $x < a$, and there is no commutative diagram
\[
\xymatrix{
a \ar[r] \ar[d] & x\ar[d] \\
b \ar[r]& y.
}
\]
In particular we have $(x,y) \in (a,b)^{\boxslash}$ and we have $X\subseteq \Rc'_*$. This proves that $\Rc'$ is join-irreducible in the interval $[ \Rc_1, \Rc]$ and that we have a cover relation $\Rc'_* \lessdot \Rc'$. It remains to compute its join label. To do so, we recall that $X\mapsto \,^{\boxslash}X$ is an isomorphism between the lattice of transfer systems and the lattice of left saturated sets (ordered by reverse inclusion). The inverse isomorphism is $Y \mapsto Y^{\boxslash}$. So, we have 

\[ \,^{\boxslash} \Rc'_* = \,^{\boxslash}\big(\Rc'\cap (a,b)^{\boxslash}\big) = \,^{\boxslash}\Rc' \land \Ls(a,b).\]
Since the lattice of left saturated set is ordered by reverse inclusion, we have 
\[ (a,b) \in \Ls(a,b)\subseteq \,^{\boxslash}\Rc' \land \Ls(a,b) = \,^{\boxslash} \Rc'_*.\] 
Hence, we have
\[ (a,b) \in \Rc' \cap \,^{\boxslash} \Rc'_*.  \]
So, the relation $(a,b)$ is the unique non-trivial relation in this intersection, and it is the join-labelling of the cover relation $\Rc'_* \lessdot \Rc'$. This concludes the proof of the surjectivity of the map $\Psi$.

 
 \end{proof}

Every non-trivial relations $a\lneq b$ on $L$, can be seen as an interval $\{ x\in L\ |\ a\leq x \leq b\}$ of the poset $L$, hence they can be naturally ordered by \emp{containment}:
\[ 
(a,b) \subseteq (c,d) \Longleftrightarrow c\leq a \hbox{ and } b \leq d. 
\] 
In the following we associate to each $(a,b) \in \Rel^{*}(L)$ a certain congruence in $\Con(\Trs(L))$. We define $\equiv_{(a,b)}$ as follow: For $\Rc_1, \Rc_2 \in \Trs(L)$, we put $\Rc_1 \equiv_{(a,b)} \Rc_2$ if and only if $(a, b) \subseteq (x,y)$ for every non-trivial relation $(x,y) \in (\Rc_1 \vee \Rc_2) \cap \, ^{\boxslash}(\Rc_1 \wedge \Rc_2)$.

\begin{proposition}
  The relation $\equiv_{(a,b)}$ is a congruence relation for $\Trs(L)$.
\end{proposition}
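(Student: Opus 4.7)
The plan is to verify the axioms of a lattice congruence in sequence: reflexivity, symmetry, transitivity, and compatibility with $\wedge$ and $\vee$. Reflexivity is immediate because $\Rc \cap \,^{\boxslash}\Rc = \Delta(L)$ (the antisymmetry of $L$ forces any $(x,y)$ lifting itself to be trivial), so the universal condition is vacuous. Symmetry is visible on the definition, since the formulas for the join and the meet are symmetric in their arguments.

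Before addressing the remaining points, I will reformulate the condition. By \cref{lem:bij_interval_rel}, the non-trivial relations in $(\Rc \vee \Rc') \cap \,^{\boxslash}(\Rc \wedge \Rc')$ are exactly the join labels of the cover relations that appear inside the interval $[\Rc \wedge \Rc', \Rc \vee \Rc']$ of $\Trs(L)$. Hence $\Rc \equiv_{(a,b)} \Rc'$ means precisely that every cover label $(x,y)$ in this interval satisfies $(a,b) \subseteq (x,y)$. Transitivity, starting from $\Rc_1 \equiv_{(a,b)} \Rc_2$ and $\Rc_2 \equiv_{(a,b)} \Rc_3$, then amounts to showing that every cover label appearing inside $[\Rc_1 \wedge \Rc_3, \Rc_1 \vee \Rc_3]$ can be transported, by joining or meeting with $\Rc_2$, to a cover label of one of the intervals $[\Rc_1 \wedge \Rc_2, \Rc_1 \vee \Rc_2]$ or $[\Rc_2 \wedge \Rc_3, \Rc_2 \vee \Rc_3]$ already known to satisfy the containment.

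For the $\wedge$- and $\vee$-compatibility, I will use finiteness of $\Trs(L)$ and the standard reduction: it suffices to check that whenever a single cover relation $\Rc_1 \lessdot \Rc_2$ is contracted by $\equiv_{(a,b)}$ and $\Rc$ is any third transfer system, both $\Rc_1 \vee \Rc \equiv_{(a,b)} \Rc_2 \vee \Rc$ and $\Rc_1 \wedge \Rc \equiv_{(a,b)} \Rc_2 \wedge \Rc$ hold. By \cref{lem:join label}, the unique join label $(u,v)$ of $\Rc_1 \lessdot \Rc_2$ contains $(a,b)$, and one must show the same for every cover label of the intervals $[\Rc_1 \vee \Rc, \Rc_2 \vee \Rc]$ and $[\Rc_1 \wedge \Rc, \Rc_2 \wedge \Rc]$. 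The main obstacle, and technical heart of the proof, is this propagation step: using the explicit descriptions of the lattice operations in \cref{pro:wfs_lattice} (meets are intersections of transfer systems, joins are transitive closures of unions), a cover label in the perturbed interval should either coincide with a cover label already present in the original interval or be obtained from such a label by pullback along a morphism of $L$; in both situations, the containment relation $(a,b) \subseteq \cdot$ on $\Rel^{*}(L)$ is preserved, which completes the verification.
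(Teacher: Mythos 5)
Your opening reformulation is sound: by \cref{lem:bij_interval_rel} (plus the easy converse inclusion), the non-trivial relations of $(\Rc\vee\Rc')\cap\,^{\boxslash}(\Rc\wedge\Rc')$ are exactly the join labels of the cover relations inside $[\Rc\wedge\Rc',\Rc\vee\Rc']$, so $\equiv_{(a,b)}$ contracts precisely the edges whose label contains the interval $[a,b]$; reflexivity and symmetry are handled as in the paper. But the two steps that carry all the content --- transitivity and compatibility with $\vee$ and $\wedge$ --- are only announced, and the one concrete mechanism you propose for the propagation is wrong. You assert that a cover label of the perturbed interval is either an old label or ``obtained from such a label by pullback,'' and that containment of $(a,b)$ is preserved in both cases. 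It is not preserved under pullback: if $(x,y)$ is a pullback of $(u,v)$, then $x=u\wedge y$ with $y\leq v$, and from $(a,b)\subseteq(u,v)$ (i.e.\ $u\leq a$ and $b\leq v$) one gets $x\leq a$ but has no reason to conclude $b\leq y$. The containment order $\subseteq$ on $\Rel^*(L)$ is simply not monotone under pullback, so the step you yourself flag as ``the technical heart'' is exactly where the argument breaks.

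The mechanism that works goes the other way: given a non-trivial relation $(x,y)$ in the new intersection, one \emph{factors} it as $x\to z_0\to z\to y$, using the weak factorization system attached to the intermediate transfer system ($\Rc_2$ for transitivity, the decomposition of the join or meet for compatibility) together with pullback and pushout arguments, so as to extract a non-trivial factor $(z_0,z)$ with $x\leq z_0< z\leq y$ lying in one of the intersections controlled by the hypotheses. Then $(a,b)\subseteq(z_0,z)\subseteq(x,y)$, since containment \emph{is} monotone when passing from a factor to the ambient relation. Carrying this out needs the lifting and factorization arguments of \cref{lem:step1} and \cref{pro:defts2}; none of that appears in your write-up. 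Finally, your reduction of $\vee/\wedge$-compatibility to a single contracted cover is a legitimate standard device for finite lattices, but it presupposes that the classes are intervals and that the relation is transitive along chains, so it does not let you bypass the transitivity verification, which you have also left unproved.
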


\begin{proof}
The symmetry of $\equiv_{(a,b)}$ is clear, and the reflexivity follows from the first item of \cref{lem:step1}. Let $\Rc_1 \equiv_{(a,b)} \Rc_2$ and $\Rc_2 \equiv_{(a,b)} \Rc_3$. Firstly, we need to prove $\Rc_1 \equiv_{(a,b)} \Rc_3$ to ensure that $\equiv_{(a,b)}$ is an equivalence. For every non-trivial relation 
\[
(x,y)\in (\Rc_1 \vee \Rc_3) \cap \, ^{\boxslash}(\Rc_1 \wedge \Rc_3),
\]
since $\Rc_2$ is a transfer system, by \cref{pro:defts2} the pair $(\,^{\boxslash} \Rc_2,\Rc_2)$ is a weak factorization system, then $(x,y)$ can be factorized as follows:

\[
\xymatrix{ 
x\ar[rd]_{^{\boxslash}\Rc_2 \, \ni }\ar[rr]& &y\\
& \exists z \ar[ur]_{\in \Rc_2}
}
\]

Note that $x\neq z$ or $z\neq y$. If $x \neq z$, since $\Rc_1 \vee \Rc_3$ is a transfer system and $(x,z)$ is a pullback of $(x,y) \in \Rc_1 \vee \Rc_3$, then we have 
\[
(x,z) \in (\Rc_1 \vee \Rc_3) \cap \, ^{\boxslash}\Rc_2.
\]
It follows from $(x,z) \in \Rc_1 \vee \Rc_3$ that $(x,z)$ has a factorization $x \rightarrow z_0 \rightarrow z$ where $(z_0, z)$ is a cover relation in $\Rc_{1}^{c}$ or $\Rc_{3}^{c}$. On the other hand, we can see from the following pushout diagram 
 \[
 \xymatrix{
 x\po \ar[r]  \ar[d]  & z \ar@{=}[d] \\
  z_0 \ar[r] & z
 }
 \]

that 
\[(z_0, z) \in \Rc_1 \cap \, ^{\boxslash}\Rc_2 \subseteq (\Rc_1 \vee \Rc_2) \cap \, ^{\boxslash}(\Rc_1 \wedge \Rc_2),\] or 
\[(z_0, z) \in \Rc_3 \cap \, ^{\boxslash}\Rc_2 \subseteq (\Rc_2 \vee \Rc_3) \cap \, ^{\boxslash}(\Rc_2 \wedge \Rc_3).
\]
Note that $\Rc_1 \equiv_{(a,b)} \Rc_2$ and $\Rc_2 \equiv_{(a,b)} \Rc_3$.
So we obtain $(a,b) \subseteq (z_0, z) \subseteq (x,y)$. If $z\neq y$, the proof is dual. So, we proved $\equiv_{(a,b)}$ is an equivalence and it remains to check that it is compatible with the meet and the join of the lattice. 

Let $\Rc_1, \Rc_2, \Tc_1, \Tc_2$ be transfer systems such that $\Rc_1 \equiv_{(a,b)} \Tc_1$ and  $\Rc_2 \equiv_{(a,b)} \Tc_2$. 
Denote by $\Rc^v = \Rc_1 \vee \Rc_2$ and $\Tc^v = \Tc_1 \vee \Tc_2$. For every non-trivial relation $(u,v)\in (\Rc^v \vee \Tc^v) \cap \, ^{\boxslash}(\Rc^v \wedge \Tc^v)$. It follows from 
\[(u,v) \in \Rc^v \vee \Tc^v = (\Rc_1 \vee \Tc_1) \vee (\Rc_2 \vee \Tc_2)
\]
that $(u,v)$ can be factored as a composition of $\Rc_1 \vee \Tc_1$ and $\Rc_2 \vee \Tc_2$. So we suppose that $(u, v)$ has a factorization $u \rightarrow v_0 \rightarrow v$, where $(v_0, v)$ is non-trivial and $(v_0, v) \in \Rc_1 \vee \Tc_1$, or $(v_0, v) \in \Rc_2 \vee \Tc_2$. Moreover, it follows from $(v_0,v)$ is a pushout of $(u,v) \in \, ^{\boxslash}(\Rc^v \wedge \Tc^v)$ that $(v_0,v) \in \, ^{\boxslash}(\Rc^v \wedge \Tc^v)$. 

On the other hand, since 
\[ (\Rc_1 \wedge \Tc_1) \vee (\Rc_2 \wedge \Tc_2) \subseteq \Rc^v \wedge \Tc^v,
\]
we have
\[
\, ^{\boxslash}(\Rc^v \wedge \Tc^v) \subseteq \, ^{\boxslash}(\Rc_1 \wedge \Tc_1) \quad 
 \text{and} \quad \, ^{\boxslash}(\Rc^v \wedge \Tc^v) \subseteq \, ^{\boxslash}(\Rc_2 \wedge \Tc_2).   
 \]

Then we have $(v_0, v) \in (\Rc_1 \vee \Tc_1) \cap \, ^{\boxslash}(\Rc_1 \wedge \Tc_1)$, or $(v_0, v) \in (\Rc_2 \vee \Tc_2) \cap \, ^{\boxslash}(\Rc_2 \wedge \Tc_2)$. It follows $(a,b) \subseteq (v_0, v) \subseteq (u, v)$. So we proved $\Rc^v \equiv_{(a,b)} \Tc^v$.

Let $\Rc_m = \Rc_1 \wedge \Rc_2$ and $\Tc_m = \Tc_1 \wedge \Tc_2$. For any non-trivial relation 
\[(u,v) \in (\Rc_m \vee \Tc_m) \cap \, ^{\boxslash}(\Rc_m \wedge \Tc_m),\] 
we have
\[ (u,v) \in \Rc_m \vee \Tc_m = (\Rc_1 \wedge \Rc_2) \vee (\Tc_1 \wedge \Tc_2) \subseteq \Rc_1 \vee \Tc_1.
\]
Note that 
\[
(u,v)\in \,^{\boxslash}(\Rc_m \wedge \Tc_m)= \, ^{\boxslash}(\Rc_1 \wedge \Tc_1 \wedge \Rc_2 \wedge \Tc_2) = \, ^{\boxslash}(\Rc_1 \wedge \Tc_1) \wedge \, ^{\boxslash}(\Rc_2 \wedge \Tc_2).
\]
Here the last meet is computed in the lattice of left saturated sets ordered by reverse inclusion. 

Without loss of generality, we can assume that $(u,v)$ has a composition $u \rightarrow u_1 \rightarrow v$, where $(u,u_1)$ is non-trivial and $(u,u_1) \in \, ^{\boxslash}(\Rc_1 \wedge \Tc_1)$.   Then we proved 
\[(a,b) \subseteq (u, u_1) \subseteq (u, v),
\]
which means that $\Rc_m \equiv_{(a,b)} \Tc_m$. 
\end{proof}

 Then we have the following:
\begin{proposition}\label{prop:forcing_equiv}
The arrows $q$ and $q'$ are forcing equivalent if and only they have the same join label. 
\end{proposition}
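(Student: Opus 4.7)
The plan is to prove both directions by characterising, through the join label $J = \Tr(a,b)$ of a cover relation $q$, exactly which congruences contract $q$.

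For the ``if'' direction, I first establish the technical claim that a congruence $\theta$ contracts a cover $q \colon \Rc_1 \lessdot \Rc$ of join label $J = \Tr(a,b)$ if and only if $\theta$ contracts the canonical arrow $J_* \lessdot J$. Applying \cref{lem:bij_interval_rel} to the trivial interval $[\Rc_1,\Rc]$ yields $\Rc = \Rc_1 \vee J$ and $\Rc_1 = \Rc \wedge (a,b)^{\boxslash}$. Combined with \cref{pro:kappa}, which gives $\kappa(J) = (a,b)^{\boxslash}$ and in particular $J_* = J \wedge \kappa(J) \subseteq (a,b)^{\boxslash}$, we deduce $J_* \subseteq \Rc \wedge (a,b)^{\boxslash} = \Rc_1$. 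The easy half of the claim is then: if $J_* \equiv J \pmod{\theta}$, joining with $\Rc_1$ gives $\Rc_1 = \Rc_1 \vee J_* \equiv \Rc_1 \vee J = \Rc$. For the converse, if $\Rc_1 \equiv \Rc \pmod{\theta}$, meeting with $J$ gives $\Rc_1 \wedge J \equiv J$; since $(a,b) \in J \setminus \Rc_1$, we have $\Rc_1 \wedge J \subsetneq J$, and because $J$ covers $J_*$ this forces $\Rc_1 \wedge J \subseteq J_*$; joining the last congruence with $J_*$ then gives $J_* \equiv J$. Once the claim is established, $\con(q) = \con(J_* \lessdot J)$ depends only on the join label $J$, so two cover relations sharing a join label are forcing equivalent.

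For the ``only if'' direction, I use the congruences $\equiv_{(a,b)}$ constructed just above. By \cref{lem:cov_relations}, for a cover $q \colon \Rc_1 \lessdot \Rc$ with join label $(c,d)$, the only non-trivial element of $(\Rc_1 \vee \Rc) \cap \,^{\boxslash}(\Rc_1 \wedge \Rc) = \Rc \cap \,^{\boxslash}\Rc_1$ is $(c,d)$. Plugging this into the definition of $\equiv_{(a,b)}$, we see that $\equiv_{(a,b)}$ contracts $q$ if and only if $(a,b) \subseteq (c,d)$ as intervals of $L$. Now suppose $q$ and $q'$ have distinct join labels $(c,d)$ and $(c',d')$. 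Since $\subseteq$ is a partial order on $\Rel^{*}(L)$, at least one of the inclusions $(c,d) \subseteq (c',d')$ or $(c',d') \subseteq (c,d)$ must fail; say $(c,d) \not\subseteq (c',d')$. Then $\equiv_{(c,d)}$ contracts $q$ (trivially, as $(c,d) \subseteq (c,d)$) but not $q'$, so $\con(q) \not\leq \con(q')$ in $\Con(\Trs(L))$. Hence $\con(q) \neq \con(q')$.

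The main obstacle is formulating and verifying the technical claim reducing contraction of an arbitrary cover to contraction of the canonical arrow $J_* \lessdot J$; in particular, verifying $J_* \subseteq \Rc_1$ through the combination of \cref{lem:bij_interval_rel} with the explicit description of $\kappa$ in \cref{pro:kappa}. Once this is in hand, the rest is a direct application of the definition of the congruences $\equiv_{(a,b)}$ already defined in this section, together with \cref{lem:cov_relations}.
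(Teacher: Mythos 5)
Your proof is correct and follows essentially the same route as the paper: the ``only if'' direction via the congruences $\equiv_{(a,b)}$ and \cref{lem:cov_relations} is identical, and the ``if'' direction uses the same join-label identities, merely packaged by reducing every cover $q$ to the canonical arrow $J_*\lessdot J$ rather than transferring contraction directly from $q$ to $q'$. One trivial slip at the end: your argument (that $\equiv_{(c,d)}$ contracts $q$ but not $q'$, hence $\con(q)\leq\,\equiv_{(c,d)}$ while $\con(q')\not\leq\,\equiv_{(c,d)}$) yields $\con(q')\not\leq\con(q)$ rather than $\con(q)\not\leq\con(q')$, but either non-inequality gives $\con(q)\neq\con(q')$, so the conclusion stands.
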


\begin{proof}
First, we prove that if two arrows have the same label, they are forcing equivalent. This is not particular to our situation, as it holds for any semidistributive lattice. If $j$ is the join label of $q : x\to y$ and $q' : x'\to y'$. Then by \cite[Proposition 2.13]{barnard2022exceptional}, we have $x\land j = j_*$ and since $j\leq y$, we have $y \land j = j$.

let $\equiv$ be a congruence relation contracting $q$. Then we have 
\[
j = y \land j \equiv x \land j = j_*.
\]
And 
\[
y' = x'\lor j \equiv x'\lor j_* = x'.
\]
So $\equiv$ also contracts the arrow $q'$ and by symmetry we see that $q$ and $q'$ are forcing equivalent. 




On the other hand, assume that the arrows $q : \Tc \to \Rc$ and $q' : \Tc' \to \Rc'$ are forcing equivalent. We denote by $(a,b)$ and $(a',b')$ their join labels, respectively. The congruence $\equiv_{(a,b)}$ defined above contracts $q$, so it contracts $q'$. Then we have $\Tc' \equiv_{(a,b)} \Rc'$. Hence it follows $(a,b) \subseteq (a',b')$. Conversely, we also have $(a', b') \subseteq (a, b)$, so we have $(a,b)=(a',b')$. 
\end{proof}
We are ready to prove \cref{thm:cong_unif}.
\begin{proof}[Proof of \cref{thm:cong_unif}.]
By \cref{thm:semidistributive}, the lattice of transfer systems on $L$ is semidistributive, hence by \cref{lem:reduction}, it is enough to prove that $j \mapsto \con(j_* \to j)$ is injective. Since the join-label of $j_* \to j$ is equal to $j$, this follows from \cref{prop:forcing_equiv}. 
\end{proof}

\subsection{Lattice of congruences}
Let $(a,b)$ be a non-trivial relation in $\Rel^*(L)$. We denote by $\Con\big((a,b)\big)$ the smallest congruence contracting $\Tr(a,b)$ and $\Tr(a,b)_{*}$. We have the following: 

\begin{lemma}\label{lemma:Con(q) and Con(q') 1}
Let $a< b <c$ be elements in $L$. We denote by $q=(a,c)$ and $q'=(b,c)$ the non-trivial relations of $\Rel^*(L)$. Then  $\Con(q) \subseteq \Con(q')$.  
\end{lemma}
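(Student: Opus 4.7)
The plan is to show that the congruence $\theta := \Con(q') = \Con((b,c))$ already makes $\Tr(a,c)$ and $\Tr(a,c)_*$ equivalent. Since by definition $\Con(q) = \Con((a,c))$ is the smallest congruence contracting the arrow $\Tr(a,c)_* \lessdot \Tr(a,c)$, this minimality immediately yields $\Con(q) \subseteq \Con(q')$. So the whole task is to transport the defining identification $\Tr(b,c) \equiv \Tr(b,c)_* \pmod{\theta}$ to an identification involving $\Tr(a,c)$ and $\Tr(a,c)_*$, using only the fact that $\theta$ respects $\wedge$ and $\vee$.

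Applying the congruence twice, first taking the join with $\Tr(a,b)$ and then the meet with $\Tr(a,c)$, we obtain
\[
\bigl[\Tr(b,c) \vee \Tr(a,b)\bigr] \wedge \Tr(a,c) \equiv \bigl[\Tr(b,c)_* \vee \Tr(a,b)\bigr] \wedge \Tr(a,c) \pmod{\theta}.
\]
The heart of the proof is a direct computation of both sides using the explicit description $\Tr(x,y) = \{(x\wedge d, d) : d\leq y\} \cup \Delta(L)$ from \cref{lem:smallest_ts}(2), together with the join formula $\Rc_1 \vee \Rc_2 = (\Rc_1 \cup \Rc_2)^{tc}$ from \cref{lem:smallest_ts}(3). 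A case analysis of the possible compositions $(a\wedge d_1,d_1)\cdot(b\wedge d_2,d_2)$ (and the reverse) shows
\[
\Tr(a,b) \vee \Tr(b,c) = \Tr(a,c) \cup \Tr(b,c), \qquad \Tr(a,b) \vee \Tr(b,c)_* = \Tr(a,c)_* \cup \Tr(b,c)_*.
\]
The crucial observation for the second equality is that since $b<c$, every non-trivial relation in $\Tr(a,b) \cup \Tr(b,c)_*$ has target strictly less than $c$, a property preserved under composition; hence neither $(a,c)$ nor $(b,c)$ can be produced by the transitive closure.

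Intersecting with $\Tr(a,c)$ finishes the computation: the left side collapses to $\Tr(a,c)$ since $\Tr(a,c)\subseteq \Tr(a,c)\cup\Tr(b,c)$, while the right side collapses to $\Tr(a,c)_*$ because $\Tr(b,c)_* \cap \Tr(a,c)$ consists only of those $(b\wedge d, d)$ with $d\leq c$, $d\neq c$ and $a\wedge d = b\wedge d$, all of which already lie in $\Tr(a,c)_*$ (and $(a,c)$ itself does not belong to $\Tr(b,c)$ because $a<b$). We conclude $\Tr(a,c) \equiv \Tr(a,c)_* \pmod{\theta}$, which completes the proof. The main obstacle is purely combinatorial: carefully tracking the transitive closure in the second join to exclude any path reaching the target $c$, and verifying that the extra terms produced by the meet with $\Tr(a,c)$ are absorbed into $\Tr(a,c)_*$. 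Once these lattice computations are pinned down, the congruence argument is formal.
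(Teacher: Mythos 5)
Your proof is correct and follows essentially the same route as the paper: both transport the defining identification $\Tr(b,c)\equiv\Tr(b,c)_*$ modulo $\Con(q')$ through $\bigl(-\vee\Tr(a,b)\bigr)\wedge\Tr(a,c)$ to contract the arrow $\Tr(a,c)_*\lessdot\Tr(a,c)$, and then invoke minimality of $\Con(q)$. The only difference is cosmetic: where you compute both joins explicitly (correctly, via the target-$<c$ argument), the paper avoids this by noting only that $(a,c)\notin\Tr(a,b)\vee\Tr(b,c)_*$, so the right-hand side is a transfer system strictly below the join-irreducible $\Tr(a,c)$ and hence $\leq\Tr(a,c)_*$, finishing with one more join with $\Tr(a,c)_*$.
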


\begin{proof}
It is clear that $\Tr(a,c) \leq \Tr(a,b) \vee \Tr(b,c)$ since $(a,c)$ is the composition of $(a,b)$ and $(b,c)$. From $\Tr(b,c) \equiv_{\Con(q')} \Tr(b,c)_{*}$, then it follows that 
\[
\Rc:= (\Tr(a,b) \vee \Tr(b,c)_{*}) \wedge \Tr(a,c) \equiv_{\Con(q')} (\Tr(a,b) \vee \Tr(b,c)) \wedge \Tr(a,c) = \Tr(a,c). 
\]
Since $(a,c) \notin \Tr(a,b) \vee \Tr(b,c)_{*}$,  we have  $\Rc \lneq \Tr(a,c)$, that is, $\Rc \leq \Tr(a,c)_{*}$. Then we have $\Tr(a,c) = \Tr(a,c) \lor \Tr(a,c)_* \equiv_{\Con(q')} \Rc \lor \Tr(a,c)_{*} = \Tr(a,c)_*$. Note that $\Con(q)$ is the smallest congruence contracting $\Tr(a,c)$ and $\Tr(a,c)_{*}$. We have $\Con(q) \subseteq \Con(q')$.

\end{proof}

\begin{lemma} \label{lemma: Con(q) and Con(q') 2}
Let $a< b <c$ be elements in $L$. We denote by $q=(a,c)$ and $q'=(a,b)$ the non-trivial relations of $\Rel^*(L)$. Then  $\Con(q) \subseteq \Con(q')$.  
\end{lemma}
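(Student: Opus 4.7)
The plan is to mirror the proof of \cref{lemma:Con(q) and Con(q') 1}, using the same decomposition of $(a,c)$ via $b$ but exploiting the contraction of $\Tr(a,b)$ rather than $\Tr(b,c)$. The first step is to establish the containment $\Tr(a,c)\subseteq \Tr(a,b)\vee \Tr(b,c)$. I would verify this by writing an arbitrary non-trivial generator $(a\wedge d,d)$ of $\Tr(a,c)$, with $d\leq c$, as the composition
\[
(a\wedge d,\, b\wedge d) \cdot (b\wedge d,\, d);
\]
the identities $b\wedge d\leq b$ and $a\wedge(b\wedge d)=a\wedge d$ put the first factor in $\Tr(a,b)$, while $b\wedge d\leq d\leq c$ puts the second in $\Tr(b,c)$.

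With this containment in hand, the congruence manipulation is the same as in the previous lemma. Since $\Con(q')$ contracts $\Tr(a,b)\equiv \Tr(a,b)_*$, taking the join with $\Tr(b,c)$ and meeting with $\Tr(a,c)$ yields
\[
\Tr(a,c)\;=\;\Tr(a,c)\wedge\bigl(\Tr(a,b)\vee \Tr(b,c)\bigr)\;\equiv_{\Con(q')}\;\Tr(a,c)\wedge\bigl(\Tr(a,b)_*\vee \Tr(b,c)\bigr)\;=:\;\Rc.
\]
As before, the conclusion will follow once I show that $\Rc\leq \Tr(a,c)_*$: joining both sides with $\Tr(a,c)_*$ then gives $\Tr(a,c)\equiv_{\Con(q')}\Tr(a,c)_*$, and the minimality of $\Con(q)$ yields $\Con(q)\subseteq \Con(q')$.

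The main obstacle is therefore the non-containment $(a,c)\notin \Tr(a,b)_*\vee \Tr(b,c)$, which I would settle by a short reachability argument inside the transitive closure of $\Tr(a,b)_*\cup \Tr(b,c)$. The last non-trivial step of any chain from $a$ to $c$ must end at $c$; no relation of $\Tr(a,b)_*$ ends at $c$ since $c\not\leq b$, and the only one in $\Tr(b,c)$ is $(b,c)$. So any such chain must pass through $b$, and the problem reduces to reaching $b$ from $a$. But $\Tr(a,b)_*$ contains no non-trivial relation ending at $b$ (the only candidate $(a,b)$ has been deliberately removed), and the only relation of $\Tr(b,c)$ ending at $b$ is the trivial $(b,b)$. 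Hence $b$, and therefore $c$, is not reachable from $a$ in this join, and this is the one nontrivial step of the argument.
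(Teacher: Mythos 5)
Your proof is correct and follows essentially the same route as the paper, which simply says ``similar to \cref{lemma:Con(q) and Con(q') 1}'': contract $\Tr(a,b)\equiv\Tr(a,b)_*$, meet the resulting join with $\Tr(a,c)$, and use $(a,c)\notin \Tr(a,b)_*\vee\Tr(b,c)$ to land in $\Tr(a,c)_*$. Your reachability argument for that non-membership (no non-trivial relation of $\Tr(a,b)_*\cup\Tr(b,c)$ ends at $b$) is a welcome detail that the paper leaves implicit.
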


\begin{proof}
Similar to \cref{lemma:Con(q) and Con(q') 1}, we immediately have 
\[
\Rc:= (\Tr(a,b)_{*} \vee \Tr(b,c)) \wedge \Tr(a,c) \equiv_{\Con(q')} (\Tr(a,b) \vee \Tr(b,c)) \wedge \Tr(a,c) = \Tr(a,c). 
\]
Since $\Rc \leq \Tr(a,c)_{*}$ and $\Tr(a,c) \equiv_{\Con(q')} \Tr(a,c)_{*}$. It also follows $\Con(q) \subseteq \Con(q')$.

\end{proof}

Then we have the following proposition. 
\begin{proposition}\label{prop:forcing}
Let $L$ be a finite lattice. Denote by $q=(a,b)$ and $q'=(c,d)$ be two non-trivial relations in $\Rel^*(L)$. Then $(a,b) \subseteq (c,d)$ if and only if $\Con(q') \subseteq \Con(q)$.    
\end{proposition}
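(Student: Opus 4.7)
The plan is to prove both implications separately.

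For the forward direction, suppose $(a,b)\subseteq (c,d)$, i.e.\ $c\leq a$ and $b\leq d$. I would interpolate through the relation $(a,d)\in \Rel^*(L)$, which is non-trivial since $a<b\leq d$. When $c<a$, applying \cref{lemma:Con(q) and Con(q') 1} to the chain $c<a<d$ (taking the ``$q$'' and ``$q'$'' of that lemma to be $(c,d)$ and $(a,d)$ respectively) yields $\Con((c,d))\subseteq \Con((a,d))$; the case $c=a$ makes this inclusion trivially an equality. When $b<d$, applying \cref{lemma: Con(q) and Con(q') 2} to the chain $a<b<d$ with the pairs $(a,d)$ and $(a,b)$ yields $\Con((a,d))\subseteq \Con((a,b))$; again the case $b=d$ is trivial. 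Chaining the two inclusions gives $\Con(q')\subseteq \Con(q)$.

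For the converse I would use the congruence $\equiv_{(a,b)}$ introduced in the previous subsection as a witness. First I would show that $\equiv_{(a,b)}$ contracts the cover relation $q : \Tr(a,b)_*\lessdot \Tr(a,b)$: the join and meet of these two transfer systems are $\Tr(a,b)$ and $\Tr(a,b)_*$ respectively, and by \cref{lem:cov_relations} the set $\Tr(a,b)\cap \,^{\boxslash}\Tr(a,b)_*$ contains a unique non-trivial relation, namely $(a,b)$ itself, on which the required condition $(a,b)\subseteq (a,b)$ is obvious. Hence $\Tr(a,b)_* \equiv_{(a,b)} \Tr(a,b)$, and by minimality of $\Con(q)$ the congruence $\equiv_{(a,b)}$ contains $\Con(q)$. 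Applying the same analysis to $q'=(c,d)$ shows that $\equiv_{(a,b)}$ contracts $q'$ if and only if $(a,b)\subseteq (c,d)$.

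Now assume $\Con(q')\subseteq \Con(q)$. Then $\Con(q')$ is contained in $\equiv_{(a,b)}$; since $\Con(q')$ contracts $q'$, so does $\equiv_{(a,b)}$, which by the criterion above forces $(a,b)\subseteq (c,d)$. The main subtlety I anticipate is entirely in this converse direction, namely in making sure that $\equiv_{(a,b)}$ is fine enough to detect interval-containment on individual cover relations of $\Trs(L)$; this reduces at once to \cref{lem:cov_relations}, which pins down the unique non-trivial element of $\Rc\cap \,^{\boxslash}\Rc_*$ for every cover $\Rc_*\lessdot \Rc$, so the whole argument stays short.
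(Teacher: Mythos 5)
Your proof is correct and follows essentially the same route as the paper: the forward direction chains \cref{lemma:Con(q) and Con(q') 1} and \cref{lemma: Con(q) and Con(q') 2} (you helpfully make the interpolation through $(a,d)$ explicit, which the paper leaves as ``immediate''), and the converse uses the congruence $\equiv_{(a,b)}$ together with the fact that the unique non-trivial relation in $\Tr(c,d)\cap\,^{\boxslash}\Tr(c,d)_*$ is $(c,d)$, exactly as in the paper. The only difference is that you justify the inclusion $\Con(q)\subseteq\,\equiv_{(a,b)}$ in a bit more detail than the paper does; no gaps.
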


\begin{proof}
Let  $q = (a,b) \subseteq (c,d) = q'$. It immediately follows from  \cref{lemma:Con(q) and Con(q') 1} and \cref{lemma: Con(q) and Con(q') 2} that $\Con(q') \subseteq \Con(q)$.

Suppose $\Con(q') \subseteq \Con(q)$. Due to $\Tr(c,d) \equiv_{\Con(q')} \Tr(c,d)_{*}$ and 
\[
\Con(q') \subseteq \Con(q) \subseteq \,\equiv_{q},\]
we have $\Tr(c,d) \equiv_{q} \Tr(c,d)_{*}$. 
As $(c,d) \in \Tr(c,d) \cap \, ^{\boxslash}\Tr(c,d)_{*}$, it follows $(a,b) \subseteq (c,d)$.
\end{proof}
\begin{proof}[Proof of \cref{thm:lat_of_cong}.]
Let $(L,\leq)$ be a finite lattice. Then the lattice of congruences on the lattice of transfer systems of $L$ is distributive, hence it is isomorphic to $\ideal(P)$ where $P$ is the poset of join-irreducible congruences. Since the lattice of transfer systems is congruence uniform with join-irreducibles in bijection with the non-trivial relations of $L$, the set $P$ is in bijection with the set $\Rel^*(L)$. By \cref{prop:forcing}, this poset is isomorphic to the opposite of $(\Rel^*(L),\subseteq)$. 
\end{proof}
\section{The spine of the lattice of weak factorization systems}

 A trim lattice has an important distributive sublattice called the \defn{spine}. It consists of the union of all the chains of maximal length. By \cite[Theorem 3.7]{TW}, the spine is also the set of the \emp{left-modular elements} of the lattice and by \cite[Lemma 6]{thomas_trim} it is a \emp{distributive sublattice}.

\begin{lemma} \label{lemma: to prove spin}
Let $(L,\leq)$ be a finite lattice and $(\Lc,\Rc)$ be a weak factorization system on $L$. Let $(a,b)\in \Rel^{*}(L)$. Then $(a, b) \in \Lc$ if and only if there is a chain 
\[
\Rc =\Rc_0 \lessdot \Rc_1 \cdots \lessdot \Rc_n =L
\]
such that $(a,b)$ is the join label of $\Rc_j \lessdot \Rc_{j+1}$ for some $0 \leq j \leq n-1$.
\end{lemma}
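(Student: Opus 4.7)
My plan is to establish the equivalence in two directions. For the backward direction, assume $(a,b)$ is the join label of a cover relation $\Rc_j \lessdot \Rc_{j+1}$ in the given chain. By Lemma~\ref{lem:join label} (combined with Lemma~\ref{lem:cov_relations}), $(a,b)$ is the unique non-trivial element of $\Rc_{j+1} \cap \,^{\boxslash}\Rc_j$, and in particular $(a,b) \in \,^{\boxslash}\Rc_j$. Since the chain ascends from $\Rc$, we have $\Rc \subseteq \Rc_j$, and the anti-monotonicity of the operator $\,^{\boxslash}(-)$ yields $(a,b) \in \,^{\boxslash}\Rc_j \subseteq \,^{\boxslash}\Rc = \Lc$.

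For the forward direction, suppose $(a,b) \in \Lc$ is non-trivial. The first item of Lemma~\ref{lem:step1} gives $\Rc \cap \,^{\boxslash}\Rc = \Delta(L)$, so $(a,b) \notin \Rc$, and in particular $\Rc$ lies strictly below the top element of $\Trs(L)$ (the full relation on $L$, which is denoted $L$ in the statement). I will then apply Lemma~\ref{lem:bij_interval_rel} to the interval $[\Rc, L]$: since $(a,b) \in \Rel^*(L \cap \,^{\boxslash}\Rc) = \Rel^*(\Lc)$, the bijection $\Psi$ of that lemma produces a join-irreducible transfer system $\Rc' = \Rc \lor \Tr(a,b)$ inside this interval, whose unique lower cover in $[\Rc, L]$ is $\Rc'_* = \Rc' \land (a,b)^{\boxslash}$, and such that the join label of the cover relation $\Rc'_* \lessdot \Rc'$ is exactly $(a,b)$.

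It then remains to assemble this distinguished cover into a saturated chain from $\Rc$ to $L$. I refine the inclusion $\Rc \subseteq \Rc'_*$ into a saturated chain $\Rc = \Rc_0 \lessdot \cdots \lessdot \Rc_j = \Rc'_*$ (with $j=0$ and empty refinement in the degenerate case $\Rc = \Rc'_*$), and similarly refine $\Rc' \subseteq L$ into a saturated chain $\Rc' = \Rc_{j+1} \lessdot \cdots \lessdot \Rc_n = L$; concatenating through the distinguished cover $\Rc_j \lessdot \Rc_{j+1}$ yields a chain of the required form in which $(a,b)$ appears as the join label at position $j$. The only substantive input beyond the anti-monotonicity of $\,^{\boxslash}(-)$ is the bijection of Lemma~\ref{lem:bij_interval_rel}, so the remaining assembly is essentially routine; any subtlety would lie in verifying that the degenerate case $\Rc = \Rc'_*$ causes no issue, which it does not since the cover $\Rc'_* \lessdot \Rc'$ then sits as the very first step of the chain.
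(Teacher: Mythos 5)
Your proof is correct and follows essentially the same route as the paper: the backward direction uses the characterization of the join label as the unique non-trivial relation of $\Rc_{j+1}\cap\,^{\boxslash}\Rc_j$ together with anti-monotonicity of $\,^{\boxslash}(-)$, and the forward direction invokes the surjectivity of the bijection $\Psi$ from Lemma~\ref{lem:bij_interval_rel} on the interval $[\Rc,L]$ and then completes the distinguished cover to a saturated chain by finiteness. Your extra care with the chain assembly and the degenerate case $\Rc=\Rc'_*$ only makes explicit what the paper leaves implicit.
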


\begin{proof}
Let $(\Lc, \Rc)$ be a weak factorization system and $(a,b)$ a non-trivial relation in $\Lc= \, ^{\boxslash}\Rc$. Then it follows from \cref{lem:bij_interval_rel} that there is a cover relation $\Rc_j \lessdot \Rc_{j+1}$ in the inverval $[\Rc, L]$ whose join-label is $(a,b)$. Since $\Trs(L)$ is finite and $\Rc_j \lessdot \Rc_{j+1}$ is in the interval of $[\Rc, L]$. Then the chain above exists.

On the other hand, suppose such a chain exists. Since $(a,b)$ is the join label of $\Rc_j \lessdot \Rc_{j+1}$ for some $j$, then it is clear that $(a,b) \in \Rc_{j+1}\cap \, ^{\boxslash} \Rc_j$. Certainly we have 
\[(a,b) \in \, ^{\boxslash}\Rc_{j} \subseteq \, ^{\boxslash}\Rc = \Lc.\]
\end{proof}

\begin{lemma}\label{lem:spine}
    $(L,\leq)$ be a finite lattice and $(\Lc,\Rc)$ be a weak factorization system on $L$. Then
    \begin{enumerate}
        \item The non trivial relations of $\Lc$ are the join labels of the cover relations in the interval $[\Rc,L]$.
        \item The non trivial relations of $\Rc$ are the join labels of the cover relations in the interval $[\Delta(L),\Rc]$. 
    \end{enumerate}
\end{lemma}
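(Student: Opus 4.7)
The plan is to derive both parts as direct consequences of the bijection established in \cref{lem:bij_interval_rel} together with the characterization of cover relations in \cref{lem:cov_relations}. Each part corresponds to applying these results to a different interval of $\Trs(L)$: part (1) to the upper interval $[\Rc, L]$ and part (2) to the lower interval $[\Delta(L), \Rc]$. In both cases I want to show a two-sided statement: every join label of a cover in the interval is a non-trivial relation of the relevant class ($\Lc$ or $\Rc$), and every non-trivial relation of that class arises as such a join label.

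For part (1), one direction is essentially the content of the preceding \cref{lemma: to prove spin}. Given a non-trivial $(a,b) \in \Lc$, that lemma yields a chain $\Rc = \Rc_0 \lessdot \cdots \lessdot \Rc_n = L$ in which $(a,b)$ appears as the join label of some cover $\Rc_j \lessdot \Rc_{j+1}$, and this cover clearly lies in $[\Rc, L]$. For the converse, I would take any cover relation $\Rc_1 \lessdot \Rc_2$ in $[\Rc, L]$ and invoke \cref{lem:cov_relations} to identify its join label with the unique non-trivial relation $(a,b) \in \Rc_2 \cap \,^{\boxslash}\Rc_1$. Since $\Rc \subseteq \Rc_1$, we get $\,^{\boxslash}\Rc_1 \subseteq \,^{\boxslash}\Rc = \Lc$, which forces $(a,b) \in \Lc$.

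For part (2), the plan is to apply \cref{lem:bij_interval_rel} to the interval $[\Delta(L), \Rc]$. The crucial observation is that $\,^{\boxslash}\Delta(L) = \Rel(L)$: every commutative square whose right map is an identity admits the bottom map as a trivial lift. Consequently $\Rc \cap \,^{\boxslash}\Delta(L) = \Rc$, and the bijection $\Psi$ of \cref{lem:bij_interval_rel} identifies $\Rel^{*}(\Rc)$ with the join-irreducible elements of $[\Delta(L), \Rc]$, each $(a,b) \in \Rel^{*}(\Rc)$ being realized as the join label of $\Tr(a,b)_{*} \lessdot \Tr(a,b)$, a cover relation inside $[\Delta(L), \Rc]$. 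This gives the surjectivity direction. For the other direction, the join label of an arbitrary cover $\Rc_1 \lessdot \Rc_2$ in $[\Delta(L), \Rc]$ is, again by \cref{lem:cov_relations}, a non-trivial relation sitting inside $\Rc_2 \subseteq \Rc$. The main point to keep in mind is the remark about $\,^{\boxslash}\Delta(L)$ being the full relation on $L$: without it, \cref{lem:bij_interval_rel} would only provide a bijection with a strict subset of $\Rel^{*}(\Rc)$. Beyond this observation, no new ingredient is required.
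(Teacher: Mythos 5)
Your proof is correct and follows essentially the same route as the paper's: both reduce to \cref{lem:bij_interval_rel} via the identities $\Lc = L \cap \,^{\boxslash}\Rc$ and $\Rc = \Rc \cap \,^{\boxslash}\Delta(L)$ (your part (1) just routes one direction through \cref{lemma: to prove spin}, which is itself a consequence of the same bijection). Your explicit check that $\,^{\boxslash}\Delta(L) = \Rel(L)$ and the containment argument $\,^{\boxslash}\Rc_1 \subseteq \,^{\boxslash}\Rc = \Lc$ for the converse directions are exactly the details the paper leaves implicit.
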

\begin{proof}
Since $(\Lc,\Rc)$ is a weak factorization system, we have $\Lc = \,^{\boxslash} \Rc$. Hence $\Lc = \,^{\boxslash}R \cap L$. Similarly, we have $\Rc = \Rc \cap \,^{\boxslash}\Delta(L)$. The result follows from \cref{lem:bij_interval_rel}. 
\end{proof}
 \begin{proposition}
Let $(L,\leq)$ be a finite lattice. Then a weak factorization system $(\Lc, \Rc)$ is on the spine of the lattice $\Wfs(L)$ if and only if $\Lc \cup \Rc = \Rel(L)$. 
 \end{proposition}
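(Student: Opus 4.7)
The plan is to exploit the fact that $\Wfs(L) \cong \Trs(L)$ is extremal of length $|\Rel^*(L)|$ (\cref{pro:extremal}). In a lattice of length $n$, an element $\Rc$ lies on a maximal chain exactly when the sum of the lengths of the intervals $[\Delta(L), \Rc]$ and $[\Rc, L]$ equals $n$. So the characterization will reduce to computing these two lengths in terms of $|\Rel^*(\Rc)|$ and $|\Rel^*(\Lc)|$, and then applying an inclusion-exclusion argument.

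First I would show that the length of $[\Delta(L), \Rc]$ equals $|\Rel^*(\Rc)|$. The upper bound uses \cref{lem:join label}: in any chain from $\Delta(L)$ to $\Rc$, the join labels of the successive cover relations are pairwise distinct non-trivial relations lying in $\Rc$ (distinctness in a chain is a general feature of join-semidistributive lattices, since the join label of $x\lessdot y$ is determined by $y=x\lor j$, $x=x\lor j_*$). The matching lower bound is obtained by iterating the construction in the proof of \cref{pro:extremal}: choose $(x,y)\in \Rc^c$ with $y$ maximal in $L$, giving the covered transfer system $\Rc\setminus\{(x,y)\}$; repeating peels off relations one at a time until we reach $\Delta(L)$, producing a chain of length $|\Rel^*(\Rc)|$.

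Next I would compute the length of $[\Rc, L]$ via the anti-isomorphism $\Trs(L)\to \Trs(L^{op})$ obtained by composing the isomorphism of \cref{pro:anti-iso} with the one of \cref{lem:duality}; this map sends $\Rc$ to $\Lc^{op}$ and swaps top and bottom. Under it, the interval $[\Rc, L]$ corresponds to $[\Delta(L^{op}), \Lc^{op}]$ in $\Trs(L^{op})$, and the computation of the previous paragraph applied in $L^{op}$ yields length $|\Rel^*(\Lc^{op})| = |\Rel^*(\Lc)|$.

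Finally, I would combine the two identities. Applying \cref{lem:step1} with $\Rc_1 = \Rc$ gives $\Rc \cap \Lc = \Rc \cap \,^{\boxslash}\Rc = \Delta(L)$, so $\Rel^*(\Rc)$ and $\Rel^*(\Lc)$ are disjoint subsets of $\Rel^*(L)$. Hence the length of a maximal chain through $\Rc$ is $|\Rel^*(\Rc)| + |\Rel^*(\Lc)| = |\Rel^*(\Lc \cup \Rc)|$, and this equals $|\Rel^*(L)|$ precisely when $\Lc \cup \Rc = \Rel(L)$, which is the claim. I expect the main obstacle to be verifying the length identity for $[\Delta(L), \Rc]$ rigorously: the iterative argument from \cref{pro:extremal} needs to be rerun inside every intermediate transfer system, so one must check that the one-relation-at-a-time removal procedure stays available throughout the interval, and that the join-label counting on the upper-bound side really produces distinct elements of $\Rel^*(\Rc)$.
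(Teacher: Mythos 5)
Your proof is correct and rests on the same backbone as the paper's: the peeling construction from the proof of \cref{pro:extremal} producing a saturated chain of length $|\Rel^*(\Rc)|$ from $\Delta(L)$ to $\Rc$, its dual via the anti-isomorphism with $\Trs(L^{op})$ to get a chain of length $|\Rel^*(\Lc)|$ from $\Rc$ to $L$, and the disjointness $\Lc\cap\Rc=\Delta(L)$ from \cref{lem:step1}. The difference is in the forward direction: the paper invokes \cref{lem:spine} (hence \cref{lem:bij_interval_rel}) to identify the join labels of the cover relations in $[\Delta(L),\Rc]$ and $[\Rc,L]$ with the non-trivial relations of $\Rc$ and $\Lc$ respectively, whereas your single length identity $\ell([\Delta(L),\Rc])=|\Rel^*(\Rc)|$ and $\ell([\Rc,L])=|\Rel^*(\Lc)|$ settles both implications by pure counting and bypasses that bijection — a genuine, if modest, simplification. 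The worry you flag about re-running the peeling step is harmless: the argument in \cref{pro:extremal} applies verbatim to any transfer system different from $\Delta(L)$, so it iterates all the way down; and for the upper bound on $\ell([\Delta(L),\Rc])$ you do not even need the join-label distinctness, since each cover relation in $\Trs(L)$ adds at least one relation, so any chain in $[\Delta(L),\Rc]$ has length at most $|\Rel^*(\Rc)|$.
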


\begin{proof}
We recall that the join label of two distinct cover relations in the same chain are distinct. Indeed if we have $x\lessdot y \leq z\lessdot t$ and $j$ denotes the join label of $x\lessdot y$, then $j\leq y$ so $z\lor j = z$ and $j$ is not the join label of the cover relation $z\lessdot t$. 

If  \[\Rc_0 =\Delta(L) \lessdot \Rc_1 \lessdot \Rc_2 \lessdot \cdots \lessdot \Rc_{n-1} \lessdot L = \Rc_{n}\] is a chain of maximal length, then all the non-trivial relations of $L$ appear as join label of the cover relations of this chain. So if $(\Lc,\Rc)$ is a weak factorization system on the spine, then by \cref{lem:spine}, $\Rc$ is one of the $\Rc_i$ and the join labels of $[\Delta(L),\Rc_i]$ are the non-trivial relations of $\Rc$ and the join-labels of $[\Rc_i,L]$ are the non-trivial relations of $\Lc$. So, all the non-trivial relations of $L$ are either in $\Lc$ or in $\Rc$ and we have $\Rc \cup \Lc = \Rel(L)$.


On the other hand let $(\Lc,\Rc)$ be a weak factorization system such that $\Lc \cup \Rc = \Rel(L)$. Since a non-trivial relation cannot be in $\Lc$ and in $\Rc$ at the same time, this condition implies that $\Rel^*(L) = \Rel^*(\Lc) \sqcup \Rel^*(\Rc)$. By the proof of \cref{pro:extremal}, there is a chain of length $|\Rel^*(\Rc)|$ from $\Delta(L)$ to $\Rc$. Dually in the poset of left saturated set, there is a chain of length $|\Rel^*(\Lc)|$ from $\Lc$ to $\Delta(L)$. Applying the isomorphism, between left saturated sets and transfer systems, we obtain a chain of length $|\Rel^*(\Lc)|$ from $\Rc$ to $L$. Hence, a chain of length $|\Rel^*(L)|$ from $\Delta(L)$ to $L$ containing $\Rc$. So $(\Lc,\Rc)$ is on the spine. 


\end{proof}
 
\section{Galois graph of the lattice of weak factorization systems} 
For \emp{distributive lattices}, we have the Birkhoff representation theorem which says that any finite distributive lattice can be represented as subsets of its set of join-irreducible elements. There is a similar, rather technical, result for finite semidistributive lattices \cite{FTSDL}, involving \emp{pairs of subsets} of the set of join-irreducible elements with a maximality property. For finite \emp{extremal lattices}, we have the \defn{Markowsky representation theorem} \cite{markowsky1992primes}, which also represents the lattice as pairs of vertices of a \emp{graph} on the set of join-irreducible elements. 

Let $G$ be a simple directed graph on $[n]=\{1,2,\cdots,n\}$ such that $i\to k$ implies $k < i$. For $X \subseteq [n]$ and $Y \subseteq [n]$ such that $X\cap Y = \emptyset$, we say that $(X,Y)$ is an \emp{orthogonal pair} if there are no arrows from $x\in X$ to $y\in Y$. We say that $(X,Y)$ is a \emp{maximal orthogonal pair} if $X$ and $Y$ are maximal for that property. We denote by $\pairs(G)$ the set of all maximal orthogonal pairs of $G$. This set can be viewed as a poset for the ordering 
\[
(X,Y) \preceq (X',Y') \hbox{ if } X\subseteq X' \hbox{ and } Y' \subseteq Y.
\]
The poset $(\pairs(G),\preceq)$ is a lattice where the join is computed by taking the intersection of the second component, and the meet is given by taking the intersection of the first component. By Markowsky's representation theorem \cite{markowsky1992primes}, this lattice is \emp{extremal} and conversely every extremal lattice is isomorphic to the lattice of maximal orthogonal pairs of a direct graph associated to $L$ called the \defn{Galois graph} of $L$. See also \cite[Theorem 2.4]{TW}. In our context, this result boils down to the following. 

\begin{theorem}
Let $(L,\leq)$ be a finite lattice. The lattice of weak factorization systems on $L$ is isomorphic to the lattice of maximal orthogonal pairs of the graph $G_L$ whose vertices are the non-trivial relations of $L$ and there is an arrow $(a,b) \to (c,d)$ if and only $c \leq a$, $d\leq b$ and $d\not\leq a$ i.e., the morphism $c\to d$ does not lift on the left the morphism $a\to b$. 
\end{theorem}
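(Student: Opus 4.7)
The plan is to apply the Markowsky representation theorem for extremal lattices directly to $\Trs(L)$, which is extremal by \cref{pro:extremal} and semidistributive by \cref{thm:semidistributive}. By \cref{pro:anti-iso} it suffices to work with $\Trs(L)$ in place of $\Wfs(L)$. The only real work is to identify the Galois graph of this lattice explicitly and match it with $G_L$.

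First I would label the vertex set. By \cref{pro:join-irreducible}, the join-irreducibles of $\Trs(L)$ are in canonical bijection with $\Rel^*(L)$ via $(a,b) \mapsto \Tr(a,b)$, so we may identify the vertex set of the Galois graph with $\Rel^*(L)$. To meet the convention in Markowsky's theorem that arrows respect a linear order on $[n]$, I would fix any linear extension of $(\Rel^*(L),\sqsubset)$: this is possible since the arrow condition we are about to derive will go against this extension, making the resulting graph acyclic.

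Next I would determine the arrow condition. For an extremal semidistributive lattice the Galois graph carries, by the general framework recalled in \cite[Theorem 2.4]{TW}, an arrow $j \to j'$ (for $j \neq j'$ join-irreducibles) precisely when $j \not\leq \kappa(j')$. Taking $j = \Tr(a,b)$ and $j' = \Tr(c,d)$, \cref{pro:kappa} gives $\kappa(\Tr(c,d)) = (c,d)^{\boxslash}$. Since $\Tr(a,b)$ is the smallest transfer system containing $(a,b)$ and $(c,d)^{\boxslash}$ is itself a transfer system, the condition $\Tr(a,b) \not\subseteq (c,d)^{\boxslash}$ is equivalent to $(a,b) \notin (c,d)^{\boxslash}$, which by definition means $(c,d) \not\boxslash (a,b)$.

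Finally I would unpack $(c,d) \not\boxslash (a,b)$ in the poset category. A commutative square from the morphism $c \to d$ to the morphism $a \to b$ exists precisely when $c \leq a$ and $d \leq b$; when such a square exists, a diagonal filler is a morphism $d \to a$, which in a poset exists if and only if $d \leq a$. Hence $(c,d) \not\boxslash (a,b)$ holds exactly when $c \leq a$, $d \leq b$, and $d \not\leq a$, matching the description of $G_L$ in the statement.

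The main obstacle is checking that the linear-ordering convention in the Markowsky setup from the excerpt is compatible with our graph. This reduces to observing that if $(a,b)\to (c,d)$ is an arrow of $G_L$ then $c\leq a$ and $d\leq b$, while $(a,b) \neq (c,d)$; in particular, the arrow relation is contained in the strict order $(c,d) \prec (a,b)$ with respect to the componentwise ordering $(a,b) \preceq (c,d) \Leftrightarrow a\leq c, b\leq d$ used elsewhere in the paper. A linear extension of the opposite of $\preceq$ on $\Rel^*(L)$ then yields the required indexing by $[n]$, so that all arrows point from higher to lower index.
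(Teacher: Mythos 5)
Your proposal is correct and follows the same skeleton as the paper (reduce to $\Trs(L)$, invoke Markowsky's representation theorem via extremality, then identify the Galois graph), but the identification of the Galois graph --- which is the entire content of the proof --- is done by a genuinely different and shorter route. The paper uses the criterion of M\"uhle for congruence-uniform extremal lattices, namely that there is an arrow $j_s \to j_t$ iff $j_t \leq (j_t)_* \lor j_s$, and then unpacks $\Tr(c,d) \subseteq \Tr(c,d)_* \lor \Tr(a,b)$ by hand through the transitive-closure description of the join, extracting a factorization $c \to a\land d \to d$; this is the bulk of the paper's argument. You instead use the form of the arrow condition in terms of $\kappa$, i.e.\ $j \not\leq \kappa(j')$, and then the already-proved identity $\kappa(\Tr(c,d)) = (c,d)^{\boxslash}$ of \cref{pro:kappa} does all the work: $\Tr(a,b)\not\subseteq (c,d)^{\boxslash}$ reduces immediately to $(c,d)\not\boxslash(a,b)$, whose poset-level unpacking you carry out correctly. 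This buys a much cleaner proof at the cost of one small point you should make explicit: the two forms of the arrow condition must be reconciled. If the cited representation theorem states the condition as $j_i \not\leq m_k$ with $m_k$ the meet-irreducible paired to $j_k$ along a maximal chain, you need that this pairing is $\kappa$ (true for trim semidistributive lattices, and consistent with the corollary following \cref{pro:kappa}); equivalently, a two-line argument from the maximality property of $\kappa(j_t)$ (it is the largest element above $(j_t)_*$ not above $j_t$) shows $j_t \leq (j_t)_* \lor j_s \Leftrightarrow j_s \not\leq \kappa(j_t)$. Your handling of the indexing convention (arrows are contained in a strict componentwise order, so any linear extension works, and acyclicity is automatic) is fine; note only that, as in the paper's statement, the arrow condition must be restricted to distinct vertices, since taking $(c,d)=(a,b)$ would otherwise produce a loop at every vertex.
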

\begin{proof}
By Markowsky's representation theorem, it is enough to show that the Galois graph of the lattice of transfer systems on $L$ is isomorphic to the graph $G_L$. Since the lattice of transfer systems is both extremal and congruence uniform, we can use the description of the Galois graph given in \cite[Corollary A.1]{muhle2021noncrossing}. The Galois graph of $\Trs(L)$ is isomorphic to the graph whose vertices are the join-irreducible elements of the lattice $\Trs(L)$. Moreover, there is an arrow $j_s \to j_t$ if and only if $j_t \leq (j_t)_{*} \lor j_s$. 

By \cref{pro:join-irreducible}, the join-irreducible transfer systems are the $\Tr(a,b)$ for $(a,b) \in \Rel^*(L)$. Since $\Tr(c,d)$ is the smallest transfer system containing $(c,d)$, when $R$ is a transfer system we have $\Tr(c,d) \subseteq R$ if and only if $(c,d)\in R$. So $\Tr(c,d) \subseteq \Tr(c,d)_* \lor \Tr(a,b)$ if and only if $(c,d) \in  \Tr(c,d)_* \lor \Tr(a,b)$. By \cref{lem:smallest_ts}, we have $\Tr(c,d)_* \lor \Tr(a,b) = \big(\Tr(c,d)_* \cup \Tr(a,b) \big)^{tc}$. Moreover $\Tr(c,d)_* = \Tr(c,d) \setminus \{ (c,d) \}$. 

$\Tr(c,d)$ is a transfer system and $(c,d) \in \Tr(c,d)$, so for every $x\in L$ such that $c\leq x \leq d$, we have $(c,x) \in \Tr(c,d)$. Moreover if $x<d$ we have $(c,x) \in \Tr(c,d)_*$. So if $(c,d) \in  \big(\Tr(c,d)_* \cup \Tr(a,b) \big)^{tc}$, there are $c_1,c_2,\cdots, c_{n-1}$ such that $c=c_0 < c_1 < c_2 < \cdots < c_{n-1} < c_n=d$ such that $(c_i,c_{i+1}) \in \Tr(c,d)_*$ or $(c_i,c_{i+1}) \in \Tr(a,b)$.  By the remark above $(c,c_{n-1}) \in \Tr(c,d)_*$, and $(c_{n-1},d) \in \Tr(a,b)$, otherwise we would have $(c,d) \in \Tr(c,d)_*$. Hence we obtain a factorization $c\to d = c\to x \to d$ where $c\leq x$, $(c,x) \in \Tr(c,d)_*$, $(x,d) \in \Tr(a,b)$ and $x\neq d$. In particular, we have a pullback diagram:
\[
\xymatrix{
x \pb \ar[r] \ar[d] & d \ar[d]\\
a \ar[r] & b.
}
\]
So we have $c\leq x \leq a$, $d \leq b$ and $x = a\land d$. Since $x\neq d$ the last condition implies that $d\not\leq a$. So the morphism $c\to d$ does not lift on the left $a\to b$. 

Conversely, if $c\to d$ does not lift on the left $a\to b$, we have $c\leq a$, $d\leq b$ and $d\not\leq a$. We deduce the diagram
\[
\xymatrix{
c\ar[r] & a\land d \pb \ar[r] \ar[d] & d\ar[d]\\
& a \ar[r] & b.
}
\]
Since $d\not\leq a$, we have $a \land d < d$. As above, $(c,d) \in \Tr(c,d)$ and $\Tr(c,d)$ is a transfer system so $(c,a\land d) \in \Tr(c,d)$. And since $a\land d \neq d$ we have $(c, a\land d) \in \Tr(c,d)_*$. Moreover, by construction $(a\land d,d) \in \Tr(a,b)$ hence $(c,d) \in \Tr(c,d)_* \lor \Tr(a,b)$. 
\end{proof}

We deduce another characterization of the \emp{spine} of the lattice of transfer systems. By \cite{thomas_trim}, the spine is a distributive lattice so it is isomorphic to $\ideal(P)$ where $P$ is its poset of join-irreducible elements. By \cite[Proposition 2.6]{TW}, the poset $P$ is obtained by taking the transitive closure of Galois graph of the trim lattice. In our setting we recover the second natural way of ordering the intervals of a poset. For two relations, we set $(a,b)\preceq (c,d)$ if there is a commutative diagram
\[
\xymatrix{
c \ar[r]\ar[d] & a \ar[d]\\
d \ar[r] & b.
}
\]
Clearly, this is equivalent to $c\leq a$ and $d\leq b$. Hence, we recover the product order on the intervals (of size at least two) of $L$.
\begin{proposition}
Let $(L,\leq)$ be a finite lattice. The \emp{spine} of the lattice of transfer systems on $L$ is isomorphic to $\ideal(\Rel^*(L),\preceq)$.
\end{proposition}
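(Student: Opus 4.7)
My plan is to combine the cited Markowsky-style representation with the explicit description of the Galois graph $G_L$ given just before. By \cite[Theorem 3.7]{TW} and \cite[Lemma 6]{thomas_trim}, the spine of $\Trs(L)$ is a distributive sublattice, so by Birkhoff's theorem it is isomorphic to $\ideal(P)$ where $P$ is its poset of join-irreducible elements. By \cite[Proposition 2.6]{TW}, $P$ is obtained by equipping the set of join-irreducibles of $L$ (which here is $\Rel^{*}(L)$ via \cref{pro:join-irreducible}) with the partial order given by the transitive closure of the Galois graph. Thus the whole proof reduces to showing that the transitive closure of $G_L$ coincides with $\preceq$ on $\Rel^{*}(L)$.

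One inclusion is immediate: if $(a,b)\to (c,d)$ is an edge of $G_L$ then by definition $c\leq a$ and $d\leq b$, hence $(a,b)\preceq (c,d)$. Since $\preceq$ is transitive, the whole transitive closure of $G_L$ is contained in $\preceq$.

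For the reverse inclusion, I would fix $(a,b)\neq (c,d)$ in $\Rel^{*}(L)$ with $(a,b)\preceq (c,d)$, so that $c\leq a$ and $d\leq b$. If $d\not\leq a$, then the triple $(c,d,a\to b)$ satisfies the defining condition of an arrow, so there is a direct edge $(a,b)\to (c,d)$ in $G_L$. Otherwise $d\leq a$; then I introduce the auxiliary relation $(c,b)$, which is non-trivial since $c<d\leq a<b$ forces $c<b$. I have two candidate arrows: $(a,b)\to (c,b)$, which is valid because $c\leq a$, $b\leq b$, and $b\not\leq a$ (as $a<b$); and $(c,b)\to (c,d)$, which is valid because $c\leq c$, $d\leq b$, and $d\not\leq c$ (as $c<d$). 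Composing gives a length-two path in $G_L$ from $(a,b)$ to $(c,d)$, completing the argument.

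The main (very minor) obstacle is the case $d\leq a$, which precludes the naive direct edge and forces the introduction of the intermediate interval $(c,b)$; the rest is bookkeeping between the representation theorems and the Galois graph description. Assembling the three ingredients then gives spine$(\Trs(L))\cong \ideal(P) = \ideal(\Rel^{*}(L),\preceq)$, as desired.
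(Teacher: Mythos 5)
Your proof is correct and follows essentially the same route as the paper: both reduce the statement, via Birkhoff and \cite[Proposition 2.6]{TW}, to showing that the transitive closure of the Galois graph $G_L$ coincides with $\preceq$ on $\Rel^{*}(L)$, and both realize $(a,b)\preceq(c,d)$ by a path through the intermediate relation $(c,b)$. The only (harmless) difference is the case split — you branch on whether $d\leq a$, while the paper branches on whether $c=a$ or $b=d$ — and your observation that $d\leq a$ forces $(c,b)$ to be distinct from both endpoints is accurate.
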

\begin{proof}
If there is a path from $(a,b)$ to $(c,d)$ in the Galois graph of the lattice of transfer systems, then $c\leq a$ and $d\leq b$. 

Conversely, if $(a,b)$ and $(c,d)$ are two non-trivial relations such that $c\leq a$ and $d\leq b$, then we have $c < d \leq b$, so $(c,b)$ is a non-trivial relation in $L$. Moreover we have a commutative diagram:
\[
\xymatrix{
c \ar[r]\ar[d] & c \ar[d]\ar[r]& a\ar[d] \\
d \ar[r] & b \ar[r] & b.
}
\]
Since $(c,d)$ is a non-trivial relation, we have $d\not\leq c$ and similarly $b\not\leq a$. If $b\neq d$ and $c \neq a$, we have a path $(a,b) \to (c,b) \to (c,d)$ in the Galois graph of $L$. If $b = d$, then the situation is even simpler: we have a commutative diagram
\[
\xymatrix{
c \ar[r]\ar[d]& a \ar[d]\\
b \ar[r] & b,
}
\]
and $b \not\leq a$ since $(a,b)$ is a non-trivial relation, so there is an arrow $(a,b) \to (c,b)$ in the Galois graph of $L$. Similarly if $c=a$, there is an arrow $(a,b)\to (a,d)$ in the Galois graph of $L$. 
\end{proof}

\section{Lattice of $G$-transfer systems}\label{sec:gts}

Let $G$ be a finite group. Recall that a \defn{$G$-transfer system} is a transfer system on the lattice $\operatorname{Sub}(G)$ of the subgroups of $G$ which is \emp{stable by conjugation}. We denote by $\Tr(G)$ the poset of $G$-transfer systems ordered by inclusion of their relations. By \cite[Proposition 3.7]{franchere2021self}, this is a lattice where the meet is given by the intersection. The join of two $G$-transfer systems is the smallest $G$-transfer system containing the union. The objective of this short section is to explain how the general properties of the lattice of weak factorization systems also apply to the lattice of the $G$-transfer systems. 

\begin{lemma}\label{lem:sublattice}
Let $G$ be a finite group. Then the lattice of $G$-transfer systems is a sublattice of the lattice of transfer systems on $\operatorname{Sub}(G)$.
\end{lemma}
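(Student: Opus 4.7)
The plan is simply to verify that the meet and join operations of the ambient lattice $\Trs(\operatorname{Sub}(G))$ both preserve conjugation-stability, since $G$-transfer systems are defined as exactly those transfer systems on $\operatorname{Sub}(G)$ satisfying this extra condition. By \cref{pro:wfs_lattice} applied to $L = \operatorname{Sub}(G)$ (together with the identification of $\Wfs(L)$ and $\Trs(L)$ via \cref{pro:anti-iso}), the meet of two transfer systems $\Rc_1,\Rc_2$ is their intersection $\Rc_1 \cap \Rc_2$, and by \cref{lem:smallest_ts}(3) the join is the transitive closure $(\Rc_1\cup\Rc_2)^{tc}$.

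First I would handle the meet: if $(H,K) \in \Rc_1\cap \Rc_2$ and $g\in G$, then $(gHg^{-1},gKg^{-1})$ lies in both $\Rc_1$ and $\Rc_2$ by conjugation-stability of each, hence in the intersection. This shows $\Rc_1\cap\Rc_2$ is a $G$-transfer system whenever $\Rc_1$ and $\Rc_2$ are, so the meet in $\Trs(\operatorname{Sub}(G))$ is also the meet in $\Tr(G)$.

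For the join, the key observation is that conjugation by $g$ is a lattice automorphism of $\operatorname{Sub}(G)$, so in particular it preserves the relation $\leq$. If $(H,K) \in (\Rc_1\cup\Rc_2)^{tc}$, there is a chain $H = H_0 \leq H_1 \leq \cdots \leq H_n = K$ with each $(H_i,H_{i+1})\in \Rc_1\cup\Rc_2$. Conjugating the whole chain by $g$ yields $gHg^{-1} = gH_0g^{-1} \leq \cdots \leq gH_ng^{-1} = gKg^{-1}$ and each step remains in $\Rc_1\cup\Rc_2$ by hypothesis, so $(gHg^{-1},gKg^{-1})$ again lies in the transitive closure. Thus $(\Rc_1\cup\Rc_2)^{tc}$ is conjugation-stable, hence a $G$-transfer system, and it is clearly the smallest such containing $\Rc_1\cup\Rc_2$. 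This makes it the join in $\Tr(G)$ as well.

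There is no real obstacle here; the argument is purely formal and relies on the fact that conjugation is a lattice automorphism, which ensures it commutes with taking unions, intersections and transitive closures of relations. The lemma is essentially a bookkeeping statement which will be used in the next section to transfer semidistributivity, trimness and congruence uniformity from $\Trs(\operatorname{Sub}(G))$ to $\Tr(G)$, since these properties are inherited by sublattices that are closed under the ambient meet and join.
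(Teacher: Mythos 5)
Your proof is correct and follows essentially the same route as the paper: both arguments observe that the meet (intersection) and join (transitive closure of the union, via \cref{lem:smallest_ts}) computed in $\Trs(\operatorname{Sub}(G))$ remain $G$-transfer systems. The only difference is cosmetic — the paper cites Rubin's description of the $G$-closure (conjugation, then pullback, then transitivity) to see that the union of two $G$-transfer systems needs only a transitive closure, whereas you verify conjugation-stability of the transitive closure directly by conjugating a witnessing chain, which is a perfectly valid and slightly more self-contained way to reach the same conclusion.
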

\begin{proof}
Since the meets in both lattices are the intersection, we only have to check that if $\Rc$ and $\Rc'$ are two $G$-transfer systems, the joins of $\Rc$ and $\Rc'$ in the two lattices are equal.

By \cite[Theorem A.2]{rubin}, the smallest $G$-transfer system containing a set of relations $S$ is obtained by closing $S$ under conjugation, then pullback and then by transitivity. As in the proof of \cref{lem:smallest_ts}, we see that the join of $\Rc$ and $\Rc'$ in $\Tr(G)$ is given by $(\Rc \cup \Rc')^{tc}$. This is also the join of $\Rc$ and $\Rc'$ in the lattice of transfer systems on $\operatorname{Sub}(G)$.
\end{proof}

Note that a \emp{sublattice} of a trim lattice is \emp{not a trim lattice} in general, hence for trimness we need another argument. 

Let us recall that a poset $(X,\leq)$ is called a \defn{$G$-poset} if there is a monotone $G$-action on it. That is $X$ is a $G$-set and for every $x\leq y \in X$, and every $g\in G$ we have $g\cdot x\leq g\cdot y$. 

A lattice $(L,\leq)$ is a \defn{$G$-lattice} if it is a $G$-poset and the action is compatible with the meets and the joins of the lattice. That is $g\cdot (x\land y) = g\cdot x \land g\cdot y$ and $g\cdot (x\lor y) = g\cdot x \lor g\cdot y$, for every $x,y \in L$ and $g\in G$. 

We start by an easy lemma.
\begin{lemma}
Let $(L,\leq)$ be a lattice which is also a $G$-poset. Then $L$ is a $G$-lattice.
\end{lemma}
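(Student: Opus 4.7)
The plan is to show that each $g \in G$ acts on $L$ as an order-isomorphism, and then to exploit the fact that meets and joins are characterized purely order-theoretically, so any order-isomorphism must commute with them.

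First I would observe that the map $\phi_g \colon L \to L$ given by $\phi_g(x) = g \cdot x$ is an order-isomorphism. Monotonicity of the $G$-action gives that $\phi_g$ is order-preserving. The group axioms give $\phi_{g^{-1}}\circ \phi_g = \phi_e = \mathrm{id}_L$, so $\phi_g$ is a bijection with inverse $\phi_{g^{-1}}$, which is itself order-preserving by the same monotonicity hypothesis applied to the element $g^{-1} \in G$.

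Next I would verify directly that $g\cdot(x\wedge y)$ satisfies the universal property characterizing the meet of $g\cdot x$ and $g\cdot y$. Since $x\wedge y \leq x$ and $x\wedge y \leq y$, the monotonicity of $\phi_g$ yields $g\cdot(x\wedge y) \leq g\cdot x$ and $g\cdot(x\wedge y) \leq g\cdot y$, so $g\cdot(x\wedge y)$ is a lower bound. Conversely, if $z \in L$ satisfies $z \leq g\cdot x$ and $z \leq g\cdot y$, then applying $\phi_{g^{-1}}$ (which is monotone) gives $g^{-1}\cdot z \leq x$ and $g^{-1}\cdot z \leq y$, hence $g^{-1}\cdot z \leq x\wedge y$ by the universal property of the meet in $L$; applying $\phi_g$ once more yields $z \leq g\cdot(x\wedge y)$. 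By the uniqueness of greatest lower bounds, $g\cdot(x\wedge y) = (g\cdot x)\wedge (g\cdot y)$.

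Finally, the statement for joins follows by the dual argument (with inequalities reversed and meets replaced by joins), or simply by applying the meet argument to the opposite lattice $L^{op}$, which is again a $G$-poset for the same action. There is no serious obstacle here; the only substantive point is recognizing that a monotone action of a \emph{group} is automatically by order-isomorphisms, after which compatibility with $\wedge$ and $\vee$ is forced by their order-theoretic universal properties.
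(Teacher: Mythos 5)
Your proof is correct and follows essentially the same route as the paper: both arguments establish the two inequalities by applying monotonicity of the actions of $g$ and $g^{-1}$ and invoking the universal property of the meet (resp.\ join), then dispatch the other operation by duality. The only cosmetic difference is that you treat the meet first while the paper treats the join, and you make the ``each $g$ acts by an order-isomorphism'' observation explicit.
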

\begin{proof}
Let $x,y\in L$. Then $x\leq x\lor y$ so $g\cdot x \leq g\cdot (x\lor y)$. Similarly $g\cdot y \leq g\cdot (x\lor y)$, hence $g\cdot x\lor g\cdot y \leq g\cdot (x\lor y)$. Moreover we have $g\cdot x \leq g\cdot x \lor g\cdot y$, hence $x\leq g^{-1}\cdot \big( g\cdot x \lor g\cdot y\big)$, and similarly for $y$. So $x\lor y \leq g^{-1}\cdot \big( g\cdot x \lor g\cdot y\big)$, and we get $g\cdot (x\lor y) \leq g\cdot x \lor g\cdot y$. The proof for the meet is dual. 
\end{proof}
If $(L,\leq)$ is a $G$-lattice and $\Rc$ is a transfer system on $L$, we denote by $g\cdot \Rc$ the relation on $L$ defined by 
\[
x (g\cdot \Rc) y \Longleftrightarrow (g^{-1}\cdot x) \Rc (g^{-1}\cdot y).
\]
We use $g^{-1}$ instead of $g$ in order to have an action of $G$ on the left. 
\begin{lemma}\label{lem:gts}
Let $(L,\leq)$ be a $G$-lattice, then the lattice of transfer systems on $L$ is a $G$-lattice for the action defined above.
\end{lemma}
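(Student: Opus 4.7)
The plan is to verify three things: (i) for each $g \in G$, the relation $g \cdot \Rc$ is again a transfer system, so the action is well-defined on $\Trs(L)$; (ii) this defines a genuine left $G$-action, that is, $e \cdot \Rc = \Rc$ and $(gh)\cdot \Rc = g \cdot (h \cdot \Rc)$; (iii) the action is monotone with respect to inclusion of transfer systems. Once these are established, the previous lemma applies to promote the $G$-poset structure on $\Trs(L)$ to a $G$-lattice structure, which is exactly the conclusion.

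I would start with (i), since this is the only step where the $G$-lattice hypothesis on $L$ (and not just the $G$-poset hypothesis) is needed. The fact that $g \cdot \Rc$ is a subrelation of $\leq$ follows from the monotonicity of the action on $L$; reflexivity, transitivity and antisymmetry of $g \cdot \Rc$ are immediate from the corresponding properties of $\Rc$ together with the fact that $g^{-1}\cdot (-)$ is a bijection of $L$. The substantive point is the pullback axiom of \cref{def:ts}. Given $x \,(g \cdot \Rc)\, z$ and $y \leq z$, we have $(g^{-1}\cdot x) \,\Rc\, (g^{-1}\cdot z)$ and $g^{-1}\cdot y \leq g^{-1}\cdot z$, so that closure of $\Rc$ under pullback gives
\[
\bigl((g^{-1}\cdot x) \wedge (g^{-1}\cdot y)\bigr) \,\Rc\, (g^{-1}\cdot y).
\]
This is precisely the place where the compatibility of the action with the meet is used: since $L$ is a $G$-lattice, $(g^{-1}\cdot x) \wedge (g^{-1}\cdot y) = g^{-1}\cdot (x \wedge y)$, and the relation above becomes $(x\wedge y)\,(g \cdot \Rc)\, y$, as required.

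Step (ii) is a routine calculation with $(gh)^{-1} = h^{-1}g^{-1}$; the chosen convention with $g^{-1}$ is exactly what is needed to make the action a left action. Step (iii) is equally direct: if $\Rc \subseteq \Rc'$ then $x\,(g \cdot \Rc)\, y$ immediately gives $(g^{-1}\cdot x)\,\Rc\,(g^{-1}\cdot y)$, hence $(g^{-1}\cdot x)\,\Rc'\,(g^{-1}\cdot y)$, that is $x\,(g \cdot \Rc')\, y$. Thus $\Trs(L)$ is a $G$-poset under this action.

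To finish, I would invoke the previous lemma: any lattice that is a $G$-poset is automatically a $G$-lattice. Recall from \cref{pro:wfs_lattice} (translated through the isomorphism $\Wfs(L)\cong \Trs(L)$) that $\Trs(L)$ is indeed a lattice, with meet given by intersection and join given by $\Tr(\Rc_1 \cup \Rc_2) = (\Rc_1 \cup \Rc_2)^{tc}$. Thus the previous lemma applies and we conclude that the action is compatible with both the meet and the join of $\Trs(L)$, which proves the statement. The only nontrivial step is the verification that the pullback axiom is preserved by the action, and this is where the assumption that the action is compatible with meets (rather than merely monotone on $L$) is genuinely used.
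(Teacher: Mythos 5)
Your proposal is correct and follows essentially the same route as the paper: verify that $g\cdot\Rc$ is again a transfer system (with the pullback axiom handled via $g^{-1}\cdot(x\land z)=(g^{-1}\cdot x)\land(g^{-1}\cdot z)$), check the $G$-poset axioms, and then invoke the preceding lemma that a lattice which is a $G$-poset is automatically a $G$-lattice. You simply spell out the action and monotonicity verifications that the paper leaves to the reader.
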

\begin{proof}
Let $R$ be a transfer system on $L$. It is clear that $g\cdot R$ is reflexive and transitive. Moreover if $x (g\cdot R) y$, we have $(g^{-1}\cdot x) R (g^{-1}\cdot y)$, so $g^{-1}\cdot x \leq g^{-1}\cdot y$. Multiplying by $g$, we get $x\leq y$. Hence $g\cdot R$ is a subposet of $(L,\leq)$. 

Now we consider a pullback diagram in $(L\leq)$:
\[
\xymatrix{
x\land z \pb \ar[r]\ar[d] & z\ar[d] \\
x \ar[r]_{\in g\cdot \Rc} & y
}
\]
such that $x (g\cdot \Rc) y$. We have $g^{-1}\cdot z \leq g^{-1}\cdot y$, and a pullback diagram
\[
\xymatrix{
(g^{-1}\cdot x) \land (g^{-1}\cdot z) \pb \ar[r]\ar[d] & g^{-1}\cdot z \ar[d]\\
g^{-1}\cdot x \ar[r]_{\in \Rc} & g^{-1}\cdot y
 }
\]
Since $\Rc$ is a transfer system and $g^{-1}\cdot (x\land z) = (g^{-1}\cdot x) \land (g^{-1}\cdot z)$, we get $g^{-1}\cdot (x\land z) \Rc g^{-1}\cdot z$, so $(x\land z) (g\cdot \Rc) z$. 

It remains to see that the action is an action of $G$-poset, since this is straightforward we leave the details to the reader. 
\end{proof}

When $(L,\leq)$ is a $G$-lattice, we denote by $L^{G} = \{ l\in L\ |\ g\cdot l = l\ \forall g\in G\}$ the sublattice of $L$ consisting of the \defn{$G$-fixed points}. 

\begin{lemma}\label{lem:gfp}
Let $G$ be a finite group and $L = \operatorname{Sub}(G)$ its lattice of subgroups. Then $L$ is a $G$-lattice, so $\Trs(L)$ is a $G$-lattice and $\Trs(L)^{G} = \Trs(G)$. 
\end{lemma}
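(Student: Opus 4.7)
The plan is to verify the three assertions in order, each being a routine unwinding of the definitions.

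First I would check that $L = \operatorname{Sub}(G)$ is a $G$-poset under the conjugation action $g\cdot H := gHg^{-1}$. The action is clearly a group action, and if $H\leq K$ in $\operatorname{Sub}(G)$ then $gHg^{-1}\leq gKg^{-1}$, so the action is monotone. By the preceding lemma, any lattice that is a $G$-poset is automatically a $G$-lattice, so $\operatorname{Sub}(G)$ is a $G$-lattice. (Concretely, conjugation commutes with intersection and with the subgroup generated by a union.) The second claim is then immediate from \cref{lem:gts}: since $L$ is a $G$-lattice, so is $\Trs(L)$ under the action $x(g\cdot \Rc)y \Longleftrightarrow (g^{-1}\cdot x)\Rc(g^{-1}\cdot y)$.

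For the final equality $\Trs(L)^{G} = \Trs(G)$, I would simply unwind the definitions. A transfer system $\Rc$ on $\operatorname{Sub}(G)$ lies in $\Trs(L)^{G}$ precisely when $g\cdot \Rc = \Rc$ for every $g\in G$, i.e.\ for all $H,K\in \operatorname{Sub}(G)$ and all $g\in G$,
\[
H\,\Rc\, K \Longleftrightarrow (g^{-1}Hg)\,\Rc\, (g^{-1}Kg).
\]
Replacing $g$ by $g^{-1}$ and using that the condition is required for every $g$, this is equivalent to
\[
H\,\Rc\, K \Longrightarrow (gHg^{-1})\,\Rc\, (gKg^{-1})\quad \text{for all } g\in G,
\]
which is exactly the first axiom in \cref{def:gts}. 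Since the second axiom of \cref{def:gts} coincides with the pullback axiom of \cref{def:ts} (for $L=\operatorname{Sub}(G)$, meet is intersection), a relation $\Rc$ lies in $\Trs(L)^{G}$ iff it is a $G$-transfer system. Hence $\Trs(L)^{G} = \Trs(G)$.

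There is no real obstacle; the only small point of care is to set up the action of $g^{-1}$ (rather than $g$) in the definition of $g\cdot \Rc$ so that both the action on $\Trs(L)$ is a left action and the fixed-point description matches the conjugation-invariance condition of \cref{def:gts} exactly.
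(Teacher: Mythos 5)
Your proposal is correct and follows essentially the same route as the paper: conjugation makes $\operatorname{Sub}(G)$ a $G$-poset, hence a $G$-lattice by the preceding lemma, \cref{lem:gts} gives the action on $\Trs(L)$, and the fixed-point identification is the same unwinding of $g\cdot\Rc=\Rc$ (the paper writes out the two inclusions $\Rc\subseteq g\cdot\Rc$ and $g\cdot\Rc\subseteq\Rc$ where you invoke the quantifier-over-all-$g$ trick, but the content is identical).
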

\begin{proof}
It is clear that the lattice of subgroups of $G$ is a $G$-lattice for the action of $G$ by conjugation, so by \cref{lem:gts} the lattice of transfer systems on $L$ is a $G$-lattice. 

Let $\Rc\in \Trs(L)^{G}$, $g\in G$, and $H,K\in L$. Then we have 
\begin{align*}
H \lhd_\Rc K & \Longrightarrow H \lhd_{g^{-1}\cdot \Rc} K \hbox{ since $g^{-1}\cdot \Rc=\Rc$} \\
& \Longrightarrow g\cdot H \lhd_\Rc g\cdot K,
\end{align*}
so $\Rc$ is a $G$-transfer system. 

Conversely let $\Rc$ be a $G$-transfer system and $g\in G$. For $H,K\in L$ such that $H\lhd_\Rc K$. Then we have $g^{-1}\cdot H \lhd_\Rc g^{-1}\cdot K$ since $\Rc$ is a $G$-transfer system, hence $\Rc \subseteq g\cdot \Rc$. Moreover if $H\lhd_{g\cdot \Rc} K$, we have $g^{-1}\cdot H \lhd_\Rc g^{-1}\cdot K$. Since $\Rc$ is a $G$-transfer system, we have $g \cdot g^{-1}\cdot H \lhd_ \Rc g\cdot g^{-1}\cdot K$, so $H\lhd_\Rc K$ and $g\cdot \Rc \subseteq \Rc$. 
\end{proof}
\begin{proof}[Proof of \cref{thm:gts}]
It is clear that a sublattice of a semidistributive lattice is semidistributive. A sublattice of a congruence uniform lattice is also congruence uniform by \cite[Theorem 4.3]{day}. So by \cref{lem:sublattice}, the lattice of $G$-transfer systems is congruence uniform and semidsitributive. By \cite[Theorem 4]{thomas_trim} and \cref{lem:gfp}, the lattice of $G$-transfer systems is also a trim lattice. 
\end{proof}
\begin{remark}
The lattice of $\mathcal{S}_3$-transfer systems can be found in \cite[Figure 4]{rubin}, however one arrow of the Hasse diagram is not at the correct position. 
\end{remark}

\section{Examples}\label{sec:examples}
\subsection{Case of a total order}\label{sec:total_order}
There are no new results in this section, however we would like to point out that the notion of weak factorization systems on a total order can be easily related to the \emp{classical poset point of view on the binary trees}. This gives an alternative proof of \cite[Theorem 25]{roitzheim2022n}. 

\emp{Binary search trees} are binary trees with a particular labelling of their vertices. They were introduced in computer science in the sixties and it is known since \cite{BW,hivert2005algebra} that they are central objects for the combinatorics of the Tamari lattice viewed as a quotient of the lattice of weak ordering of the permutations. 

Let $T$ be a \emp{binary tree} with $n$ inner vertices, we label its vertices by the integers $1,2,\cdots,n$ in such a way that if $x$ is a vertex in the left (right) subtree of $y$ then the label if $x$ is strictly smaller (larger) than the label of $y$. There is a unique way of viewing a binary tree as binary search tree which can be obtained by an in-order traversal of the tree (recursively visit left subtree, root and right subtree). We refer to \cref{fig:bin-search-tree} for an illustration. 

\begin{figure}[h]
\centering
\scalebox{.7}{\begin{tikzpicture}
  [ level distance=8mm,
   level 1/.style={sibling distance=15mm},
   level 2/.style={sibling distance=10mm},
   level 3/.style={sibling distance=5mm},
   inner/.style={circle,draw=black,fill=black!15,inner sep=0pt,minimum size=4mm,line width=1pt},
   leaf/.style={},                      
   edge from parent/.style={draw,line width=1pt}]
  \node [inner] {4}
     child {node [inner] {2}
       child {node [inner] {1}
         child {node [leaf] {}}
         child {node [leaf] {}}
       }
       child {node [inner] {3}
         child {node [leaf] {}}
         child {node [leaf] {}}
       }
     }
     child {node [inner] {5}
       child {node [leaf] {}
		}
       child {node [leaf] {}}
     };
\end{tikzpicture}}
\caption{Binary search tree of size 5 labeled by the in-order algorithm.}
\label{fig:bin-search-tree}
\end{figure}
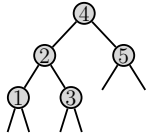

If $T$ is such a binary search tree, it induces a partial order $\lhd_T$ on $[n]$ by setting $i \lhd_T j$ if the vertex labelled by $i$ is in the subtree with root the vertex labelled by $j$. Conversely, it is not difficult to classify the posets coming from such a binary search tree.

\begin{proposition}\label{prop:charac_tree_poset}
A poset $\lhd$ on $[n]$ is of the form $\lhd_T$ for a binary tree $T$ if and only if for every $x,y\in [n]$ such that $x<y$ we have:
\begin{enumerate}
\item If $x\lhd y$ and $x\leq z\leq y$, then $z\lhd y$.
\item If $y\lhd x$ and $x\leq z\leq y$, then $z\lhd x$.
\item For every $x\leq y \in [n]$,there exists $z\in [n]$ such that $x\leq z \leq y$ and $x\lhd z$ and $y\lhd z$. 
\end{enumerate}
\end{proposition}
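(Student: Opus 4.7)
The plan is to prove both directions, with the forward implication following from a standard property of in-order labelling and the converse by strong induction on $n$ that reconstructs the tree root-first.

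For the forward direction, the key observation is that the labels of the nodes in any subtree of $T$ form a \emph{contiguous interval} of $[n]$, since the in-order traversal visits the entire subtree rooted at any node consecutively. Now assume $\lhd = \lhd_T$. If $x<y$ and $x \lhd y$, then $x$ lies in the left subtree of $y$, so the interval of labels of the subtree rooted at $y$ contains both $x$ and $y$ and hence every integer $z$ with $x\leq z\leq y$; this gives condition 1, and condition 2 is symmetric. For condition 3, given $x\leq y$, take $z$ to be the lowest common ancestor of $x$ and $y$ in $T$: the BST property forces $x\leq z\leq y$, and by construction $x \lhd z$ and $y \lhd z$.

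For the converse, I proceed by strong induction on $n$. The base case $n\leq 1$ is trivial. For $n\geq 2$, apply condition 3 to the pair $(1,n)$ to obtain $r\in[n]$ with $1\lhd r$ and $n\lhd r$. I claim $i \lhd r$ for every $i\in [n]$: if $i<r$, then $1\leq i\leq r$ together with $1\lhd r$ and condition 1 give $i\lhd r$; if $i>r$, then $r\leq i\leq n$ together with $n\lhd r$ and condition 2 give $i\lhd r$. By antisymmetry $r$ is the unique element with this property, and it will be the root. Next, I show that $L:=\{1,\dots,r-1\}$ and $R:=\{r+1,\dots,n\}$ are $\lhd$-incomparable: if $i<r<j$ and $i\lhd j$, then condition 1 applied to $i\leq r\leq j$ yields $r\lhd j$, contradicting $j\lhd r$ by antisymmetry; the case $j\lhd i$ is dual via condition 2. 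Hence the restrictions $\lhd|_L$ and $\lhd|_R$ are posets, and they inherit conditions 1--3 (for condition 3, any witness $z$ between $x$ and $y$ automatically stays in the same half).

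By induction there exist binary trees $T_L$ on $L$ and $T_R$ on $R$ whose in-order labellings realize the restricted posets. Build $T$ by taking $r$ as the root with left subtree $T_L$ and right subtree $T_R$; the in-order traversal of $T$ reads off $1,2,\dots,r-1,r,r+1,\dots,n$, so $T$ is correctly labelled. It remains to check $\lhd_T = \lhd$: for $i,j$ in the same half this follows from the inductive hypothesis combined with the fact that the subtree rooted at $r$ on one side is exactly $T_L$ (resp.\ $T_R$); for $i$ and $r$, or $i$ and $j$ in different halves, we have already shown the only relation is $i\lhd r$ for all $i$, which matches $\lhd_T$. The main obstacle is the \emph{independence of the two halves}, i.e.\ showing no cross-relations exist between $L$ and $R$; this step is what forces us to use all three conditions together (condition 3 to locate the root, conditions 1--2 plus antisymmetry to rule out cross relations).
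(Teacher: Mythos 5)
Your proof is correct. Note that the paper does not actually prove this proposition: it simply cites \cite[Proposition 2.21]{cpp}, so there is no internal argument to compare against. Your argument is a clean, self-contained replacement. The forward direction correctly rests on the one structural fact that matters, namely that the in-order labelling makes the label set of every subtree a contiguous interval, which immediately yields conditions 1 and 2, and the lowest common ancestor gives the witness for condition 3. The converse by strong induction is also sound: condition 3 applied to the pair $(1,n)$ produces the root $r$, conditions 1 and 2 propagate $1\lhd r$ and $n\lhd r$ to $i\lhd r$ for all $i$, antisymmetry pins down $r$ uniquely and rules out cross-relations between $\{1,\dots,r-1\}$ and $\{r+1,\dots,n\}$, and the three conditions visibly restrict to each half (since each half is an interval, any $z$ with $x\leq z\leq y$ stays inside it). The only cosmetic gap is that the induction hypothesis is stated for posets on $[m]$ while $R=\{r+1,\dots,n\}$ is only order-isomorphic to $[n-r]$ after shifting; this relabelling is harmless but worth a half-sentence. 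You also implicitly use reflexivity of $\lhd$ (e.g.\ $r\lhd r$, and the case $x=y$ in condition 3), which is consistent with the paper's convention that $\lhd_T$ includes each vertex in its own subtree.
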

\begin{proof}
This is \cite[Proposition 2.21]{cpp}.
\end{proof}
Since $[n]$ is a total ordering for the usual ordering, we can split the relations of $\lhd_T$ into two sets $\Inc(T)$ and $\Dec(T)$. The first set is the set of \defn{increasing relations} of $\lhd_T$: the relations $x\lhd_T y$ for which $x\leq y$. The second set is the set of \defn{decreasing relations} of $\lhd_T$: the relations $y\lhd_T x$ for which $x\leq y$.

The first item of \cref{prop:charac_tree_poset} tells us that $\Inc(T)$ is a \emp{left-saturated set} for the total ordering $[n]$. Similarly, the second item tells us that $\Dec(T)$ is a \emp{left-saturated set} for the \emp{opposite total ordering} on $[n]$, so $\Dec(T)^{op}$ is a \emp{transfer system} on $[n]$. The third item is better rephrased using a diagram:
\[
\xymatrix{ 
x\ar[rd]_{\Inc(T) \ni }\ar[rr] & &y\ar[dl]^{\in \Dec(T)}\\
& \exists z 
}
\]
and we see that $(\Inc(T),\Dec(T)^{op})$ is a weak factorization system on $[n]$. Conversely, if $(\Lc,\Rc)$ is a weak factorization system on $[n]$, then $\Lc \cup \Rc^{op}$ is a poset satisfying the three conditions of \cref{prop:charac_tree_poset}, hence it comes from a binary tree.

\begin{theorem}
Let $L$ be the usual total ordering on $[n]$. The poset of weak factorization systems of $L$ is isomorphic to the Tamari lattice.
\end{theorem}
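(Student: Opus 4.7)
The plan is to upgrade the bijection sketched in the preceding paragraphs, between weak factorization systems on $L = [n]$ and binary search trees with $n$ internal nodes, to a lattice isomorphism onto the Tamari lattice. Given $(\Lc, \Rc) \in \Wfs(L)$, the associated tree $T$ satisfies $\Inc(T) = \Lc$ and $\Dec(T) = \Rc^{op}$. Since the order on $\Wfs(L)$ is $(\Lc, \Rc) \preceq (\Lc', \Rc')$ iff $\Rc \subseteq \Rc'$, this translates through the bijection into the order on binary trees given by $T \leq T'$ iff $\Dec(T) \subseteq \Dec(T')$. It remains to show that this latter order is the Tamari order.

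The cleanest route is via cover relations. By \cref{lem:cov_relations}, a cover $\Rc_1 \lessdot \Rc$ in $\Trs(L)$ is given by adding a unique cover relation $(a, b)$ to the Hasse diagram of $\Rc_1$, with $a < b$ in $[n]$. Writing $x = a$ and $y = b$, I will verify by a direct combinatorial computation that this move corresponds to a right rotation of the associated binary tree: $y$ is a node of $T_1$ whose left child is labelled $x$, and performing a right rotation at $y$ produces the tree $T$ associated with $\Rc$. Concretely, if the left child $x$ of $y$ in $T_1$ has left subtree $A$ and right subtree $B$, and if $C$ denotes the right subtree of $y$, then the rotated tree has $x$ as root with left subtree $A$ and with right subtree whose root is $y$ and whose children are $B$ and $C$; one then computes that $\Dec(T)$ differs from $\Dec(T_1)$ exactly by the addition of $(x,y)$ together with the relations $\{(x,c) : c \in C\}$, all of which are forced by transitivity from $(x,y)$ and the relations $(y,c) \in \Dec(T_1)$, and no relation is removed.

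Since the Hasse diagram of the Tamari lattice consists precisely of right rotations of binary trees, the matching of covers yields the desired lattice isomorphism. Alternatively, one can invoke the classical characterization going back to Björner--Wachs \cite{BW} and Hivert--Novelli--Thibon \cite{hivert2005algebra} that on binary search trees with $n$ internal nodes the Tamari order is exactly containment of the decreasing relation sets, which closes the proof immediately. The main technical step in either approach is the rotation bookkeeping outlined above: one must verify that the newly added cover relation $(x, y)$ forces precisely the expected additional decreasing relations via transitivity, that these are automatically compatible with the pullback closure (which is harmless in a total order, where meets are simply minima), and that no relation of $\Dec(T_1)$ disappears in $\Dec(T)$.
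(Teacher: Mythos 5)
Your proposal is correct, and your ``alternative'' route is in fact exactly the paper's proof: the authors establish the bijection $T \mapsto (\Inc(T),\Dec(T)^{op})$ and then simply quote the known characterization (they cite \cite[Proposition 2.22]{cpp}) that $T_1\leq T_2$ in the Tamari order if and only if $\Dec(T_1)\subseteq \Dec(T_2)$ and $\Inc(T_2)\subseteq\Inc(T_1)$, which closes the argument in one line. Your primary route --- matching cover relations of $\Trs([n])$ with right rotations of binary trees --- is a genuinely more self-contained alternative that reproves that classical fact rather than citing it. Your bookkeeping for a single rotation is right: if $y$ has left child $x$ with subtrees $A,B$ and right subtree $C$, the rotation adds to $\Dec$ exactly $y\lhd x$ and $c\lhd x$ for $c\in C$, removes nothing, and $(x,y)$ is the unique new \emph{cover} relation of the resulting transfer system (the $(x,c)$ are composites through $y$), consistent with \cref{lem:cov_relations}. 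The one point you should not leave implicit is the converse direction: to conclude that the two Hasse diagrams coincide you must also show that \emph{every} cover $\Rc_1\lessdot\Rc$ in $\Trs([n])$ arises from a rotation, i.e.\ that if $(x,y)$ is the unique relation of $\Rc^c\setminus\Rc_1$ then $x$ is necessarily the left child of $y$ in the tree $T_1$; this is true (the elements strictly between $x$ and $y$ in $[n]$ must already lie below $x$ in $T_1$, forcing $x$ to be the left child of $y$), but it is the step where the work actually sits, and a matching of covers in both directions is what upgrades the bijection to a poset isomorphism. In short: your fallback is the paper's proof verbatim; your main route trades the external citation for an elementary but slightly longer rotation argument whose remaining gap (covers $\Rightarrow$ rotations) is real but routine.
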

\begin{proof}
The map sending a binary tree $T$ to $(\Inc(T),\Dec(T)^{op})$ is a bijection between the set of binary trees with $n$ vertices and the weak factorization systems on $[n]$. It is well-known that for two binary trees $T_1$ and $T_2$ we have $T_1\leq T_2$ in the Tamari lattice if and only if $\Dec(T_1)\subseteq \Dec(T_2)$ and $\Inc(T_2) \subseteq \Inc(T_1)$ (see e.g. By \cite[Proposition 2.22]{cpp}). Hence this bijection is an isomorphism of posets. 
\end{proof} 
\begin{remark}
The canonical join-complex of the Tamari lattice has been studied in \cite{barnard2020canonical}. There is a description of the complex in Proposition 2.10. She proves in Theorem 1.1 that the complex is shellable and contractible or homotopy equivalent to a wedge of spheres. 
\end{remark}
\begin{remark}
Using this approach, it is easy to see that the \defn{composition closed} premodel structures in the sense of \cite{balchin2024composition} are exactly the \defn{exceptional intervals} of \cite{rognerud_exceptional}. These intervals are known to be in bijection with the intervals in the lattice of noncrossing partitions. This gives an alternative proof of \cite[Theorem 4.6]{balchin2024composition}. The exceptional intervals are related to the \emp{Serre functor} of the bounded derived category of the incidence algebra of the Tamari lattice (see \cite{rognerud_tamari} for more details). At the moment, we don't know if this is just a coincidence. 
\end{remark}
\subsection{Transfer systems for the group $C_p \times C_p$}
In this short section, we consider the lattice of subgroups of a product of two cyclic groups of order $p$. As a corollary, we obtain an alternative proof of \cite[Theorem 5.4]{bao2023transfer}.

These lattices of subgroups are the diamond lattices. Precisely, we let $Di_n$ be the lattice on ${0,1,\cdots, n+1}$ where $1,2,\dots,n$ is an antichain and $0$ is the least element of the lattice and $n+1$ the greatest. Its Hasse diagram is 
\[
\xymatrix{
&  n+1 &\\
1\ar[ru] & \cdots\ar[u] & n\ar[lu] \\ 
&  0\ar[ru]\ar[lu]\ar[u]
}
\]
\begin{proposition}
    The number of transfer systems for the lattice $Di_n$ is $2^{n+1}+n$.
\end{proposition}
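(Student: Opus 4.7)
The plan is to parametrize a transfer system $\Rc$ on $Di_n$ by three pieces of data and then to translate the closure axioms into explicit constraints on these data. Concretely, set
\[
A = \{ i \in \{1,\ldots,n\} \ | \ (0,i)\in \Rc\}, \quad B = \{ i \in \{1,\ldots,n\} \ | \ (i,n+1)\in \Rc\},
\]
and let $\varepsilon \in \{0,1\}$ record whether $(0,n+1)\in \Rc$. Since the only non-trivial relations of $Di_n$ are $(0,i)$, $(i,n+1)$ for $i \in \{1,\ldots,n\}$, together with $(0,n+1)$, the triple $(A,B,\varepsilon)$ determines $\Rc$ uniquely.

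Next I would translate \cref{def:ts} into constraints. The pullback condition applied to $(i,n+1)\in \Rc$ with $j \in \{1,\ldots,n\}\setminus\{i\}$ gives $i\wedge j = 0 \lhd j$, hence $j\in A$. Therefore $i\in B$ forces $A\supseteq \{1,\ldots,n\}\setminus\{i\}$. Applied to $(0,n+1)$ it gives $0 \lhd j$ for every $j\in\{1,\ldots,n\}$, so $\varepsilon = 1$ forces $A = \{1,\ldots,n\}$. Finally, transitivity applied to $(0,i)\in \Rc$ and $(i,n+1)\in \Rc$ shows that $A\cap B\neq \emptyset$ forces $\varepsilon = 1$. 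I would then check that, conversely, any triple satisfying these three constraints yields a transfer system (all remaining pullbacks are either identities or already present in $\Rc$ by construction).

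Having characterized the admissible triples, I would enumerate them according to $|B|$. If $B = \emptyset$, then either $\varepsilon = 0$ with $A$ arbitrary, giving $2^n$ systems, or $\varepsilon = 1$ with $A = \{1,\ldots,n\}$, giving one more system. If $|B| = 1$, say $B=\{i_0\}$, then $A$ must be $\{1,\ldots,n\}\setminus\{i_0\}$ (forcing $\varepsilon = 0$) or $\{1,\ldots,n\}$ (forcing $\varepsilon = 1$), giving $2n$ systems altogether. If $|B| \geq 2$, two distinct constraints $A \supseteq \{1,\ldots,n\}\setminus\{i\}$ together imply $A = \{1,\ldots,n\}$, and then $A\cap B\neq \emptyset$ forces $\varepsilon = 1$; the count is $2^n - n - 1$. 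Summing these contributions gives
\[
2^n + 1 + 2n + (2^n - n - 1) = 2^{n+1} + n,
\]
as desired.

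The only delicate point is making sure every constraint has been extracted from the definition of a transfer system and that no case is double-counted; the main obstacle is the interaction between the pullback condition at $(i,n+1)$ (which pushes mass into $A$) and the transitivity condition (which pushes mass into $\varepsilon$), which must be analyzed carefully to avoid omitting or duplicating configurations.
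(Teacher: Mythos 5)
Your proof is correct, but it takes a genuinely different route from the paper. The paper counts transfer systems on $Di_n$ as cliques in the elevating graph (an instance of \cref{thm:cliques}): it identifies the graph as two complete graphs on the vertices $(0,x)$ and $(x,n+1)$, joined by the $n$ edges $\{(0,x),(x,n+1)\}$, plus the isolated vertex $(0,n+1)$, and then counts $1+1+n+2(2^n-1)=2^{n+1}+n$ cliques. You instead enumerate transfer systems directly from \cref{def:ts}, encoding each as a triple $(A,B,\varepsilon)$ and extracting exactly the right constraints: the pullback of $(i,n+1)$ along $j$ forces $\{1,\dots,n\}\setminus\{i\}\subseteq A$ whenever $i\in B$, the pullback of $(0,n+1)$ forces $A=\{1,\dots,n\}$ when $\varepsilon=1$, and transitivity forces $\varepsilon=1$ when $A\cap B\neq\emptyset$; your verification that these are the only constraints is sound (all other pullbacks and compositions are trivial on this lattice), and your case analysis on $|B|$ sums correctly to $(2^n+1)+2n+(2^n-n-1)=2^{n+1}+n$. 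Your argument is elementary and self-contained, needing nothing beyond the definition of a transfer system, which makes it more transparent for this particular lattice; the paper's argument is shorter once the elevating-graph machinery is available and is chosen deliberately to illustrate that machinery on a worked example. Both are valid.
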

\begin{proof}

The proof follows from the description of the \emp{elevating graph} of the lattice. The vertices are the non-trivial relations of the poset. They are of three types: $(0,x)$, $(x,n+1)$ for $x\in \{1,\cdots, n\}$ and $(0,n+1)$. There is an edge between $(a,b)$ and $(c,d)$ in the graph if and only if $(a,b)$ lifts on the left $(c,d)$ and vice-versa. 

If $x,y \in \{1,\cdots, n\}$ and $x\neq y$, there are no commutative diagrams involving $(0,x)$ and $(0,y)$ and also no commutative diagram involving $(x,n+1)$ and $(y,n+1)$. In particular they lift each other. So $\{ (0,x) \ |\ x\in \{1,2,\cdots, n\}\}$ is a complete graph $K_n$. Similarly for the relations $(x,n+1)$. If $(0,x)$ is a relation with $1\leq x \leq n$, we have
\[
\xymatrix{
0\ar[r]\ar[d] & 0 \ar[d]\\
x \ar[r]& n+1
}
\]
since $x\not\leq 0$, we see that $(0,x)$ does not lift on the left $(0,n+1)$. Similarly, we see that $(0,n+1)$ does not lift on the left $(x,n+1)$. So $(0,n+1)$ is an isolated vertex in the graph. 

It remains to see if there are edges between the two complete graphs. For $(0,x)$ and $(y,n+1)$ we have a commutative diagram
\[
\xymatrix{
0 \ar[r]\ar[d] & y \ar[d]\\
x \ar[r]& n+1
}
\]
and it splits into two triangles if and only if $x = y$. To conclude the elavating graph of $Di_n$ consists of 
\begin{enumerate}
\item A complete graph on the vertices $(0,x)$ for $x\in \{1,2,\cdots,n\}$,
\item A complete graph on the vertices $(x,n+1)$ for $x\in \{1,2,\cdots,n\}$
\item An isolated vertex $(0,n+1)$.
\item For every $x\in \{1,2,\cdots,n\}$, an edge between $(0,x)$ and $(x,n+1)$.
\end{enumerate}
\begin{figure}[h!]
\begin{center}
\[ \xymatrix{
 &  & \bullet \ar@{-}[rd] &  &  \\
 & \bullet \ar@{-}[ru] \ar@{-}[rr] &  & \bullet \ar@{-}[ld] &  \\
 & \bullet \ar@{-}[rd] \ar@{-}[ruu] & \bullet \ar@{-}[lu] \ar@{-}[uu] &  & \bullet \\
\bullet \ar@{-}[ru] \ar@{-}[rr] \ar@{-}[ruu] &  & \bullet \ar@{-}[ruu] &  &  \\
 & \bullet \ar@{-}[lu] \ar@{-}[ru] \ar@{-}[uu] \ar@{-}[ruu] &  &  & 
}
\]
\end{center}
\caption{The elevating graph of $Di_4$.}
\end{figure}
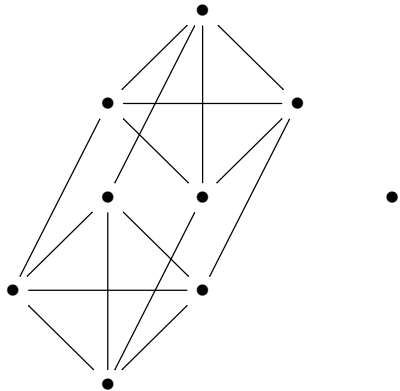

The number of cliques is:
\begin{enumerate}
\item $1$ empty subgraph
\item $1$ isolated vertex.
\item $n$ edges outside of the two complete graphs.
\item $2\times (2^{n}-1)$ non empty cliques in the two complete graphs. 
\end{enumerate}
It follows that the number of transfer systems for $Di_n$ is $1+1+n +2^{n+1} - 2 = 2^{n+1}+n$ 
\end{proof}

\subsection{A lower bound for the number of transfer systems of the boolean lattices}
Boolean lattices appear as lattices of subgroups of squarefree abelian groups. Hence they form an interesting family if one hopes to classify the $G$-transfer systems for all finite abelian groups $G$. The boolean lattices of subsets of $[n]$ for $n=0,1,2$ are respectively total orders and a diamond lattice. The case $n=3$ has been treated in \cite{balchin2020equivariant} and is already complicated. With our approach counting transfer systems is equivalent to counting cliques in the elevating graph. In this short section we provides some properties of the elevating graph and we find a (very large) lower bound for the number of transfer systems. With the exception of the lower bound, we only sketch the arguments of the other results. 

Let $B_n$ be the boolean lattice of the subsets of $[n]$. Then we have:
\begin{enumerate}
\item There are $3^n - 2^n$ join-irreducibles in $\Trs(B_n)$. Indeed, the numbers of relations $(A,B)$ with $A\subseteq B$ is $3^n$ and there are $2^n$ trivial relations. 
\item The number of edges in the elevating graph of $B_n$ is 
\[
\binom{3^n - 2^n}{2} - \big( 6^n - 5^n -3^n +2^n\big).
\]
We count the number of edges in the Galois graph, which is the complement of the elevating graph. There are $6^n$ commutative squares 
\[
\xymatrix{
A\ar[r]\ar[d] & C\ar[d]\\
B\ar[r] & D
}
\]
and $5^n$ commutative squares with $B\subseteq C$. So, there are $6^n - 5^n$ commutative squares where $(A,B)$ does not lift on the left $(C,D)$. Note that in such a square we have $A\neq B$ and $C\neq D$. It remains to remove the squares where $(A,B) = (C,D)$ and there are $3^n - 2^n$ such squares. 

This sequence starts with $0,0,4,99,1474,17715,190414,\ldots$
\end{enumerate}

Note that if there is an elevating set of size $k$, then any subset is also an elevating set. So if $k$ is the largest size of an elevating set, there are at least $2^k$ elavating sets. Moreover there are numerous upper bounds for the number cliques in a graph. Here we use the upper bound of \cite[Theorem 1]{wood2007maximum}. We recall that the largest size of an elevating set is given by $\maxcov(L)$, the maximal number of lower cover relations in the lattice of transfer systems. It follows that
\begin{proposition}
Let $(L,\leq)$ be a finite lattice. We denote by $k=\maxcov(L)$ and by $n$ the number of non-trivial relations in $L$. Then we have
\[
2^k \leq |\Trs(L)| \leq \sum_{j=0}^{k} \binom{k}{j} (\frac{n}{k})^j
\]
\end{proposition}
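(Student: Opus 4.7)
My plan is to translate both bounds into statements about cliques in the elevating graph $\Gamma$ of $L$ and then apply, respectively, an elementary subset-counting argument and a known upper bound due to Wood. The starting point is \cref{thm:cliques}, which puts $\Trs(L)$ in bijection with the cliques of $\Gamma$. Since $\Gamma$ has $n = |\Rel^*(L)|$ vertices, it remains to identify its clique number with $k = \maxcov(L)$. This follows by combining \cref{thm:semibricks}, which identifies elevating sets with the canonical join representations coming from the join labels below a given transfer system, together with \cref{prop:lower_cover}, which says that these labels are in bijection with the lower covers of the transfer system. In particular, a clique of maximum size in $\Gamma$ is precisely the elevating set attached to a transfer system realizing $\maxcov(L)$.

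For the lower bound, I would fix a clique $S$ in $\Gamma$ of maximum size $k$. Since any subset of a clique is again a clique, the $2^k$ subsets of $S$ each determine a clique of $\Gamma$, and these are pairwise distinct. Via the bijection of \cref{thm:cliques}, they yield $2^k$ pairwise distinct transfer systems on $L$, so $|\Trs(L)| \geq 2^k$.

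For the upper bound, I would directly invoke Theorem 1 of \cite{wood2007maximum}, which states that a graph on $n$ vertices with clique number at most $k$ has at most $\sum_{j=0}^{k} \binom{k}{j} (n/k)^j$ cliques in total (the empty clique being included). Applied to $\Gamma$, whose clique number is exactly $k$ and whose vertex set has size $n$, this gives the desired inequality.

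The argument is essentially a synthesis of results already established in the paper, so I do not foresee a serious obstacle. The only point that deserves a careful sentence is the identification of the clique number of $\Gamma$ with $\maxcov(L)$: one must rule out elevating sets that do not arise as the labels of the lower covers of some transfer system. This is resolved by \cref{thm:semibricks}, which shows that every elevating set is the canonical join representation of a unique transfer system, whose lower covers are then in bijection with this elevating set by \cref{prop:lower_cover} and \cref{lem:join label}.
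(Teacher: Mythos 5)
Your proposal is correct and follows essentially the same route as the paper: identifying transfer systems with cliques of the elevating graph via \cref{thm:cliques}, identifying the clique number with $\maxcov(L)$ through the correspondence between elevating sets and lower cover relations, and then applying the subset-of-a-clique argument for the lower bound and \cite[Theorem 1]{wood2007maximum} for the upper bound. Your extra care in justifying that the clique number equals $\maxcov(L)$ (via \cref{thm:semibricks} and \cref{prop:lower_cover}) is a welcome elaboration of a step the paper states without detailed argument.
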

Let us focus on the boolean lattices $B_n$. For $0\leq k \leq 2n-1$, we consider the relation $\Rc_k$ on $B_n$ defined by
\[
X \leq_{\Rc_k} Y \hbox{ if and only if $X = Y$ or $X\subseteq Y$ and $|X|+|Y|\leq k$.}
\]
\begin{lemma}
The relation $\Rc_k$ is a transfer system for the boolean lattice $B_n$.
\end{lemma}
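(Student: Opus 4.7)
The plan is to verify the two conditions of \cref{def:ts} directly from the definition of $\Rc_k$. Since $\Rc_k$ is constructed to be a subrelation of $\subseteq$, condition (1) is immediate, and reflexivity and antisymmetry of $\Rc_k$ are also clear from the case split in the definition (if $X \leq_{\Rc_k} Y$ and $Y \leq_{\Rc_k} X$ with $X\neq Y$, both of $|X|+|Y|\leq k$ would be needed together with $X\subsetneq Y$ and $Y\subsetneq X$, a contradiction). So the content reduces to checking transitivity and closure under pullbacks.

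For transitivity, suppose $X \leq_{\Rc_k} Y$ and $Y \leq_{\Rc_k} Z$. If any of the two relations is an equality the conclusion is trivial, so we may assume $X \subsetneq Y$ with $|X|+|Y|\leq k$ and $Y \subsetneq Z$ with $|Y|+|Z|\leq k$. Then $X \subsetneq Z$, and since $|X| < |Y|$, we get
\[
|X|+|Z| < |Y|+|Z| \leq k,
\]
hence $X \leq_{\Rc_k} Z$, as required.

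For the pullback condition, suppose $X \leq_{\Rc_k} Z$ and $Y \subseteq Z$; we need $X\cap Y \leq_{\Rc_k} Y$. If $X = Z$, then $X\cap Y = Z\cap Y = Y$ and the relation is trivial. Otherwise $X \subsetneq Z$ with $|X|+|Z|\leq k$. If $X\cap Y = Y$ we are done, and if $X \cap Y \subsetneq Y$, then the inclusions $X\cap Y \subseteq X$ and $Y \subseteq Z$ give
\[
|X\cap Y|+|Y| \leq |X|+|Z| \leq k,
\]
showing $X\cap Y \leq_{\Rc_k} Y$. This establishes the second axiom of \cref{def:ts}, and the lemma follows.

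There is no real obstacle here: the whole proof is a short sequence of cardinality bookkeeping arguments, once one observes that whenever the relation is non-trivial, a strict inclusion is involved and the cardinality inequality $|X|+|Y|\leq k$ is preserved under taking subsets on the left and supersets (still bounded by the right endpoint) on the right. The same computation will presumably be reused in the subsequent proposition to identify lower covers, which suggests that the interesting work will be in enumerating such elevating sets rather than in verifying the transfer system axioms themselves.
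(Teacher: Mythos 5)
Your proof is correct and follows essentially the same route as the paper's: verify reflexivity, antisymmetry, transitivity, and closure under pullbacks by the observation that passing to a subset on the left and staying inside the right endpoint can only decrease $|X|+|Y|$. The only cosmetic difference is that the paper checks the cardinality bound for an arbitrary commutative square before specializing to pullbacks, whereas you verify the pullback axiom directly; both arguments are the same bookkeeping.
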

\begin{proof}
The relation is by definition reflexive, and it is clearly anti-symmetric. For transitivity let us assume that we have $X\leq_{\Rc_k}Y $ and $Y\leq_{\Rc_k}Z$. The first condition implies that $|X|\leq |Y|$, so $|X|+|Z|\leq |Y|+|Z|\leq k$ and $\Rc_k$ is a subposet of the inclusion relation. 

Let us consider a commutative diagram
\[
\xymatrix{
A \ar[r]\ar[d]& B\ar[d]\\
X\ar[r]& Y
}
\]
such that $X\leq_{\Rc_k}Y$, and $X\neq Y$. Then we have $|A|+|B|\leq |X|+|Y|\leq k$, so $A \leq_{\Rc_k} B$. This is in particular true, when the diagram is a pullback diagram. When $X = Y$, the only relations that we obtain by taking pullback along $X=X$ are the equalities relations. Hence $\Rc_k$ is a transfer system. 
\end{proof}
\begin{lemma}\label{lem:formaul_cov}
The number of lower cover relations of $\Rc_k$ is
\[
\sum_{j=0, j\neq \frac{k}{2}}^{n}\binom{n}{j}\binom{n-j}{k-2j}. 
\]
\end{lemma}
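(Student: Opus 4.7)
My plan is to apply \cref{prop:lower_cover}, which bijects the lower covers of $\Rc_k$ with the $\sqsubset$-maximal elements of $\Rc_k^c$. The task then reduces to (i) classifying $\Rc_k^c$ and (ii) identifying the $\sqsubset$-maximal relations in it, both accomplished by direct case analysis in the boolean lattice $B_n$.

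\textbf{Step 1 (Cover relations of $\Rc_k$).} I first identify $\Rc_k^c$. Given $A \subsetneq B \subseteq [n]$ with $|A|+|B|\leq k$, an intermediate set $Z$ with $A \subsetneq Z \subsetneq B$ satisfies $(A,Z),(Z,B)\in\Rc_k$ iff $|Z|+|B|\leq k$ (the condition $|A|+|Z|\leq k$ being automatic from $|Z|<|B|$). Combined with $|A|<|Z|<|B|$, such a $Z$ exists iff $|A|<k-|B|$ and $|B|\geq|A|+2$. Hence $(A,B)\in\Rc_k^c$ if and only if $A\subsetneq B$, $|A|+|B|\leq k$, and either $|B|=|A|+1$ or $|A|+|B|=k$.

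\textbf{Step 2 ($\sqsubset$-maximal covers).} Next I show that $(A,B)\in\Rc_k^c$ is $\sqsubset$-maximal exactly when $|A|+|B|=k$. In the boolean lattice, $(A,B)\sqsubset(C,D)$ amounts to $A = B\cap C$, $B\subseteq D$ and $C\subseteq D$. If $|A|+|B|=k$ and $(C,D)\neq(A,B)$: either $D=B$, whence $C\subseteq B$ and $C\cap B=A$ force $C=A$, a contradiction; or $D\supsetneq B$, in which case $|D|\geq|B|+1$ gives $|C|+|D|\geq k+1$ when $C=A$, and $|C|+|D|\geq k+2$ when $C\supsetneq A$ (since $C$ then contains some $x\notin B$), contradicting $(C,D)\in\Rc_k$. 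Conversely if $|A|+|B|<k$, then necessarily $|B|=|A|+1$; the bound $k\leq 2n-1$ forces $|B|<n$, so an $x\notin B$ exists. Then $(C,D)=(A\cup\{x\},B\cup\{x\})$ when $|A|+|B|\leq k-2$, or $(C,D)=(A,B\cup\{x\})$ when $|A|+|B|=k-1$ (which requires $k$ even), yields an element of $\Rc_k^c$ strictly $\sqsubset$-above $(A,B)$; the pullback identity $A=B\cap C$ and the cover criterion from Step~1 (using $|D|=|C|+1$ in the first subcase and $|C|+|D|=k$ in the second) are verified directly.

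\textbf{Step 3 (Enumeration).} Finally I count pairs $(A,B)$ with $A\subsetneq B\subseteq[n]$ and $|A|+|B|=k$. Stratifying by $j=|A|$, one has $|B|=k-j$ and $|B\setminus A|=k-2j$, giving $\binom{n}{j}\binom{n-j}{k-2j}$ pairs. Summing over $j$, the term $j=k/2$ (present only when $k$ is even) corresponds to $A=B$ and must be excluded, while terms with $j>k/2$ vanish automatically since $k-2j<0$. This yields the claimed formula. The main obstacle is the converse in Step~2, especially the boundary case $|A|+|B|=k-1$ where the witness $(A,B\cup\{x\})$ has $|D|=|C|+2$ and is a cover only through the saturation criterion $|C|+|D|=k$ rather than by adjacency in $B_n$.
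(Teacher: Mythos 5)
Your proof is correct and follows essentially the same route as the paper: both reduce the count to the $\sqsubset$-maximal cover relations via \cref{prop:lower_cover}, identify these as exactly the pairs with $|A|+|B|=k$, and enumerate them by stratifying on $|A|$. Your version is somewhat more thorough, in that you fully classify $\Rc_k^c$ first and explicitly verify that the saturated relations are $\sqsubset$-maximal, a point the paper leaves implicit.
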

\begin{proof}
Let $X\lessdot Y$ be a cover relation. There are two possibilities:
\begin{itemize}
\item Cover relation of type 1: $|X|+|Y|=k$.
\item Cover relation of type 2: $|X|+|Y|<k$. 
\end{itemize}
First let us remark that if $X\subseteq Y$ and $|X|+|Y|=k$, then $X\leq_{\Rc_k} Y$ is a cover relation. Indeed, if we have $X\leq_{\Rc_k} X' \leq_{\Rc_k} Y$, then $k \geq |X'|+|Y|\geq |X|+|Y| = k$, so $X=X'$.

Let $X\lessdot Y$ be a cover relation of type $2$. First, we see that $Y$ has exactly one more element than $X$. Indeed, if $y \in Y\setminus X$, we have $|X|+|X\cup \{y \} |\leq |X|+|Y| <k$ and $|X\cup \{y \}|+|Y|=|X|+|Y|+1 \leq k-1+1 = k$, so we have $(X,X\cup \{y\})$ and $(X\cup \{y\},Y)$ are two relations of $\Rc_k$, so $Y = X\cup \{y\}$. We can find two subsets $X'$ and $Y'$ such that $X\subseteq X'$, $Y\subseteq Y'$, $X'\cap Y= X$ and $|X'|+|Y'|=k$. Indeed, we have $|Y|\leq \frac{k}{2} < n$ and if we add an element of $[n]\setminus Y$ to $Y$, we obtain a set $Y'$ such that $X\subseteq Y'$ and $|X|+|Y'|= |X|+|Y|+1\leq k$, hence a new relation $X\leq_{\Rc_k}Y'$. Repeating the process we can increasing $|X|+|Y'|$ up to $|X|+n$. We stop when $|X|+|Y'|=k$. If this is not enough, we can add the elements of $[n]\setminus Y$ to $X$. Since these elements are not in $Y$, the intersection of this new set and $Y$ is still equal to $X$. In the worst case, we end up with $X' = [n]\setminus\{y\}$ and $Y' = [n]$, so $|X'|+|Y'|=2n-1 \geq k$.

Then, there is a pullback diagram:
\[
\xymatrix{
X \pb \ar[r] \ar[d] & Y\ar[d]\\
X' \ar[r]_{\in \Rc_k^{c}} & Y' 
}
\]
so $X\lessdot Y$ is not a maximal cover relation. It remains to count the cover relations of type $1$. If $j$ is the size of $X$, then the size of $Y\setminus X$ is $k-2j$, so we have
\[
\sum_{j=0}^{n}\binom{n}{j}\binom{n-j}{k-2j}
\]
pairs of subsets $X\subseteq Y$ with $|X|+|Y|=k$. We have to remove the pairs where $X=Y$, they only occur when $k$ is even and they appear when $j=k/2$.  
\end{proof}
Removing $\Rc_{0}$, these numbers naturally form a triangle, see \cref{fig:triangle}. 
\begin{figure}[h!]
\begin{center}
\defn{1}\\
\defn{2} 1 \defn{2}\\
3 3 \defn{7} 3 3 \\ 
4 6 \defn{16} 13 \defn{16} 6 4\\
5 10 30 35 \defn{51} 35 30 10 5\\
6 15 50 75 \defn{126} 121 \defn{126} 75 50 15 6\\
7 21 77 140 266 322 \defn{393} 322 266 140 77 21 7
\end{center}
\caption{Triangle of the number of lower cover relations of the $\Rc_k$ for $k=1,\dots,2n-1$.}\label{fig:triangle}
\end{figure}
Note that the maximal numbers in each row (in red) appear in the expansion of $(1+x+x^2)^n$. When $n$ is odd, this is the coefficient of $x^n$ and when $n$ is even, it is the coefficient of $x^{n-1}$ or $x^{n+1}$. 

\begin{proposition}\label{pro:max_cov}
    Let $n\in \mathbb{N}$ and $L=B_n$ the boolean lattice of the subsets of $[n]$. Then
    \begin{enumerate}
        \item If $n$ is odd, $\maxcov(L)\geq \sum_{j=0}^{\frac{n-1}{2}} \binom{n}{j} \binom{n-j}{n-2j}$.
        \item If $n$ is even, $\maxcov(L)\geq \sum_{j=1}^{\frac{n}{2}} \binom{n}{j}\binom{n-j}{n+1-2j}.$
    \end{enumerate}
\end{proposition}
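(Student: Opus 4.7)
The plan is to apply Lemma \ref{lem:formaul_cov} to the specific transfer systems $\Rc_k$ constructed just before the proposition. Since $\maxcov(L)$ is by definition the maximum number of lower cover relations over all transfer systems on $L$, any concrete $\Rc_k$ provides a lower bound, and the whole point is choosing $k$ that makes this bound as close as possible to the peak of the coefficients of $(1+x+x^2)^n$.

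First, for \emph{$n$ odd} I would take $k=n$. Note that $k/2$ is not an integer in this case, so in the formula of Lemma \ref{lem:formaul_cov} no term is excluded, and the number of lower cover relations of $\Rc_n$ is exactly
\[
\sum_{j=0}^{n} \binom{n}{j}\binom{n-j}{n-2j}.
\]
The term $\binom{n-j}{n-2j}$ vanishes as soon as $n-2j<0$, i.e.\ $j>n/2$; since $n$ is odd this means $j\geq (n+1)/2$. Hence the sum reduces to $\sum_{j=0}^{(n-1)/2}\binom{n}{j}\binom{n-j}{n-2j}$, which is the bound claimed in item (1).

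For \emph{$n$ even} I would take $k=n+1$ (equivalently $k=n-1$, since the coefficients of $(1+x+x^2)^n$ are symmetric around $x^n$). Again $k/2=(n+1)/2$ is not an integer, so Lemma \ref{lem:formaul_cov} gives
\[
\sum_{j=0}^{n}\binom{n}{j}\binom{n-j}{n+1-2j}
\]
as the number of lower cover relations of $\Rc_{n+1}$. The factor $\binom{n-j}{n+1-2j}$ is nonzero precisely when $0\leq n+1-2j\leq n-j$, i.e.\ when $1\leq j\leq (n+1)/2$; since $n$ is even this is $1\leq j\leq n/2$. So the sum telescopes to $\sum_{j=1}^{n/2}\binom{n}{j}\binom{n-j}{n+1-2j}$, matching item (2). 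The inequality $\maxcov(L)\geq$ (cover count of $\Rc_k$) is then immediate from the definition of $\maxcov$.

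There is no real obstacle, only a bookkeeping choice: the optimization of $k$ in Lemma \ref{lem:formaul_cov} is essentially the observation that the coefficient of $x^k$ in $(1+x+x^2)^n$ is maximal at the center $k=n$, but when $k$ is even one must subtract $\binom{n}{k/2}$, so for even $n$ one slides one step off-center to an odd $k$ to avoid this subtraction. The only thing to be careful about is the range of $j$ in the two parities; beyond that, the proof consists in invoking Lemma \ref{lem:formaul_cov} and cleaning up the indices, as illustrated by the triangle in \cref{fig:triangle}.
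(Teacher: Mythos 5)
Your proposal is correct and is exactly the paper's argument: the paper's proof is the one-line observation that the proposition is the special case of \cref{lem:formaul_cov} obtained by taking $k=n$ for odd $n$ and $k=n\pm 1$ for even $n$, and your index bookkeeping (truncating the sums where the binomial coefficients vanish, and noting that the exclusion $j\neq k/2$ is vacuous for odd $k$) matches the values in \cref{fig:triangle}. No gaps.
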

\begin{proof}
This is a special case of \cref{lem:formaul_cov}.
\end{proof}
\begin{question}\label{qu:bool}
    Let $L= B_n$ be the boolean lattice of the subsets of $[n]$. Is $\maxcov(\Trs(L))$ given by \cref{pro:max_cov} ?
\end{question} 

Using the computer, we were are able to check that \cref{qu:bool} has a positive answer for $n\leq 6$.

\begin{tabular}{|c|c|c|c|c|c|c|c|c|c|}\hline $n$ & $1$ & $2$ & $3$ & $4$ & $5$ &$6$ & $7$\\\hline$\maxcov $  & $1$ & $2$ & $7$ & $16$ & $51$ & $126$ & $\geq 393$\\\hline$2^{\maxcov} \approx$  & $2$ & $4$ & $128$ & $65536$ & $2.2\cdot 10^{15}$ & $8.5\cdot 10^{37}$& $\geq 2\cdot 10^{118}$ \\\hline $|\Trs(B_n)| $& $2$ & $10$ & $450$ & $5389480$ & ? & ? & ?\\ \hline $|spine|$ &2 & 8 & 288 & 1975552 & ? &? &? \\ \hline $\sum_{j=0}^{k} \binom{k}{j} (\frac{n}{k})^j \approx$ & 2 & 12 &9752 &$1.8\cdot 10^{11}$ &$1.8 \cdot 10^{36}$ &$3.3 \cdot 10^{100}$ & ? \\\hline\end{tabular}

\begin{remark}
The spine of the lattice of transfer systems on $B_n$ is isomorphic to the lattice of order ideals of the poset $(\Rel^*(B_n),\preceq)$. The number of order ideals is also the number of antichains. Moreover an antichain in the poset $(\Rel^*(B_n),\preceq)$ is also an antichain in the poset of \emp{all relations} $(\Rel(B_n),\preceq)$. It is not difficult to check that this largest poset is isomorphic to the product $[3]^{n}$ of total orders of length $3$. It is notoriously difficult to count antichains in such products (for product of total orders with $2$ elements, this is the so-called Dedekind's problem) however, there are some known upper bounds, see for example \cite{park2023note}. 
\end{remark}

\section{Dictionary: transfer systems and torsion classes}\label{sec:dico}
The lattice of the torsion pairs of the category of finite dimensional modules over a finite dimensional algebra shares most of the lattice properties of the lattice transfer systems on a finite lattice. Here we provide a dictionary and hope that it will help to navigate between the two subjects. 

{\small 
\begin{center}
\begin{tabular}{|l|l|l|ll}
\cline{1-3}
                   & Weak factorization systems            & Torsion Classes                              &  &  \\ \cline{1-3}
Finite             & Yes, by definition                    & Sometimes when $\tau$-tilting finite $(1)$         &  &  \\ \cline{1-3}
Regular (6)        & ?                                   & When finite                                  &  &  \\ \cline{1-3}
Semidistributive & Yes                                   & Yes  (2)                             &  &  \\ \cline{1-3}
Distributive    & Only when $n\leq 2$                   & Only when product of local algebras (7)                         &  &  \\ \cline{1-3}
Congruence uniform & Yes                                   & Yes (2)                             &  &  \\ \cline{1-3}
Trim               & Yes                                   & Sometimes $(3)$ &  &  \\ \cline{1-3}
Join irreducible   & Relations of the lattice              & Bricks $(4)$                                       &  &  \\ \cline{1-3}
Canonical joinands & Elevating sets                        & Semibricks $(5)$                                   &  &  \\ \cline{1-3}
Canonical complex  & Clique complex of the elevating graph & ?                                            &  &  \\ \cline{1-3}
\end{tabular}
\end{center}
}
\begin{enumerate}
    \item There are many equivalent characterizations of the algberas with finitely many torsion pairs. We refer for example to \cite[Theorem 10.2]{treffinger} for a large list of such characterizations. 
    \item The lattice of torsion pairs is completely semidistributive and completely congruence uniform \cite{DIRRT}.
    \item We don't think that there is a known classification of the finite dimensional algebra whose lattice of torsion pairs is trim, however this is the case for the path algebras of Dynkin quivers. More generally, the lattice of torsion pairs of the incidence algebras of finite representation type are trim by \cite{TW} and \cite[Theorem 1.4]{simson2010integral}. In that case, the spine consists of the so-called \emp{split torsion pairs}. Note that there are many examples of $\tau$-tilting finite algebras whose lattice of torsion pairs is not trim. 
    \item The completely join-irreducible elements of the lattice of torsion pairs are in bijection with the bricks in the module category by \cite[Theorem 1.4]{DIRRT}. Here by a brick we mean a module $X$ is whose endomorphism algebra is a division algebra. The join-labelling is in this case known under the name \emp{brick labelling}. 
    \item The set of bricks that are canonical join representations are called \emp{semibricks} and they were studied in \cite{asai} and also in \cite{BTZ}. Note that since the lattice of torsion pairs is most of the time infinite, the treatment is more subtle. For example, there are non-zero elements in the lattice without any lower cover relation.
    \item Here a poset is said to be \defn{regular} if its (unoriented) Hasse diagram is a regular graph. The lattice of weak factorization systems on a total order is isomorphic to the Tamari lattice, hence it is regular. We don't know any other example of regular lattice of weak factorization systems. On the other hand, when the lattice of torsion pairs is finite, there is a bijection between torsion pairs and (support) $\tau$-tilting modules. There is a \emp{mutation} theory for these objects which implies the regularity of the lattice.  \item The lattice of torsion pairs of a finite dimensional algebra is distributive if and only if it is boolean. In this case, this is the lattice of torsion pairs of a semisimple algebra. See \cite{luo2023boolean} for more details. 
\end{enumerate}
 \bibliographystyle{plain}
\bibliography{biblio.bib}

\begin{thebibliography}{10}

\bibitem{tau_tilting}
Takahide Adachi, Osamu Iyama, and Idun Reiten.
\newblock $\tau$-tilting theory.
\newblock {\em Compositio Mathematica}, 150:415 -- 452, 2013.

\bibitem{adamek2002weak}
Ji{\v{r}}{\'\i} Ad{\'a}mek, Horst Herrlich, Ji{\v{r}}{\'\i} Rosick{\`y}, and
  Walter Tholen.
\newblock Weak factorization systems and topological functors.
\newblock {\em Applied Categorical Structures}, 10:237--249, 2002.

\bibitem{Antolin}
Omar Antol\'{i}n-Camarena and Toby Barthel.
\newblock The nine model category structures on the category of sets.
\newblock https://www.matem.unam.mx/~omar/notes/modelcatsets.html.

\bibitem{asai}
Sotaro Asai.
\newblock Semibricks.
\newblock {\em International Mathematics Research Notices}, 2020:4993--5054,
  2018.

\bibitem{balchin2020equivariant}
Scott Balchin, Daniel Bearup, Clelia Pech, and Constanze Roitzheim.
\newblock Equivariant homotopy commutativity for ${G}= {C}_{pqr}$.
\newblock {\em arXiv preprint arXiv:2001.05815}, 2020.

\bibitem{balchin2022combinatorics}
Scott Balchin, Ethan MacBrough, and Kyle Ormsby.
\newblock The combinatorics of ${N}_\infty$-operads for ${C}_{qp^{n}}$ and $
  {D}_{p^n}$.
\newblock {\em arXiv preprint arXiv:2209.06992}, 2022.

\bibitem{balchin2024composition}
Scott Balchin, Ethan MacBrough, and Kyle Ormsby.
\newblock Composition closed premodel structures and the kreweras lattice.
\newblock {\em European Journal of Combinatorics}, 116:103879, 2024.

\bibitem{bao2023transfer}
Linus Bao, Christy Hazel, Tia Karkos, Alice Kessler, Austin Nicolas, Kyle
  Ormsby, Jeremie Park, Cait Schleff, and Scotty Tilton.
\newblock Transfer systems for rank two elementary abelian groups:
  characteristic functions and matchstick games.
\newblock {\em arXiv preprint arXiv:2310.13835}, 2023.

\bibitem{barnard_canonical}
Emily Barnard.
\newblock The canonical join complex.
\newblock {\em Electron. J. Comb.}, 26(1):research paper p1.24, 25, 2019.

\bibitem{barnard2020canonical}
Emily Barnard.
\newblock The canonical join complex of the tamari lattice.
\newblock {\em Journal of Combinatorial Theory, Series A}, 174:105207, 2020.

\bibitem{BTZ}
Emily Barnard, Andrew~T. Carroll, and Shijie Zhu.
\newblock Minimal inclusions of torsion classes.
\newblock {\em Algebraic Combinatorics}, 2017.

\bibitem{barnard2022exceptional}
Emily Barnard and Eric~J. Hanson.
\newblock Exceptional sequences in semidistributive lattices and the poset
  topology of wide subcategories.
\newblock {\em arXiv preprint arXiv:2209.11734}, 2022.

\bibitem{barton}
Reid~William Barton.
\newblock {\em A model 2-category of enriched combinatorial premodel
  categories}.
\newblock PhD thesis, Harvard University, 2019.

\bibitem{BW}
Anders Bj{\"o}rner and Michelle~L Wachs.
\newblock Permutation statistics and linear extensions of posets.
\newblock {\em Journal of Combinatorial Theory, Series A}, 58(1):85--114, 1991.

\bibitem{blumberg2015operadic}
Andrew~J. Blumberg and Michael~A. Hill.
\newblock Operadic multiplications in equivariant spectra, norms, and
  transfers.
\newblock {\em Advances in Mathematics}, 285:658--708, 2015.

\bibitem{blumberg2024homotopical}
Andrew~J. Blumberg, Michael~A. Hill, Kyle Ormsby, Ang{\'e}lica~M. Osorno, and
  Constanze Roitzheim.
\newblock Homotopical combinatorics.
\newblock {\em Notices of the American Mathematical Society}, 71(2), 2024.

\bibitem{cpp}
Gr{\'e}gory Chatel, Vincent Pilaud, and Viviane Pons.
\newblock The weak order on integer posets.
\newblock {\em Algebraic Combinatorics}, 2(1):1--48, 2019.

\bibitem{day}
Alan Day.
\newblock Characterizations of finite lattices that are bounded-homomqrphic
  images or sublattices of free lattices.
\newblock {\em Canadian Journal of Mathematics}, 31(1):69--78, 1979.

\bibitem{DIRRT}
Laurent Demonet, Osamu Iyama, Nathan Reading, Idun Reiten, and Hugh Thomas.
\newblock Lattice theory of torsion classes: beyond $\tau$-tilting theory.
\newblock {\em Transactions of the American Mathematical Society, Series B},
  10(18):542--612, 2023.

\bibitem{dickson1966torsion}
Spencer~E. Dickson.
\newblock A torsion theory for abelian categories.
\newblock {\em Transactions of the American Mathematical Society},
  121(1):223--235, 1966.

\bibitem{franchere2021self}
Evan~E. Franchere, Kyle Ormsby, Ang{\'e}lica~M. Osorno, Weihang Qin, and Riley
  Waugh.
\newblock Self-duality of the lattice of transfer systems via weak
  factorization systems.
\newblock {\em arXiv preprint arXiv:2102.04415}, 2021.

\bibitem{freese}
Ralph~S. Freese, Jaroslav Je{\v{z}}ek, and James~Bryant Nation.
\newblock {\em Free lattices}, volume~42.
\newblock American Mathematical Soc., 1995.

\bibitem{funayama}
Nenosuke Funayama and Tadasi Nakayama.
\newblock 109. on the distributivity of a lattice of lattice-congruences.
\newblock {\em Proceedings of the Imperial Academy}, 18(9):553--554, 1942.

\bibitem{gratzerlattice}
George~A Gratzer and Friedrich Wehrung.
\newblock {\em Lattice theory: special topics and applications}.
\newblock Springer, 2016.

\bibitem{hivert2005algebra}
Florent Hivert, Jean-Christophe Novelli, and Jean-Yves Thibon.
\newblock The algebra of binary search trees.
\newblock {\em Theoretical Computer Science}, 339(1):129--165, 2005.

\bibitem{jonsson1961}
Bjarni J{\'o}nsson.
\newblock Sublattices of a free lattice.
\newblock {\em Canadian Journal of Mathematics}, 13:256--264, 1961.

\bibitem{luo2023boolean}
Yongle Luo and Jiaqun Wei.
\newblock Boolean lattices of torsion classes over finite-dimensional algebras.
\newblock {\em arXiv preprint arXiv:2303.15802}, 2023.

\bibitem{markowsky1992primes}
George Markowsky.
\newblock Primes, irreducibles and extremal lattices.
\newblock {\em Order}, 9:265--290, 1992.

\bibitem{may}
J.~Peter May and Kate Ponto.
\newblock {\em More concise algebraic topology: localization, completion, and
  model categories}.
\newblock University of Chicago Press, 2011.

\bibitem{mizuno2014classifying}
Yuya Mizuno.
\newblock Classifying $\tau$-tilting modules over preprojective algebras of
  dynkin type.
\newblock {\em Mathematische Zeitschrift}, 277(3):665--690, 2014.

\bibitem{muhle2016trimness}
Henri M{\"u}hle.
\newblock Trimness of closed intervals in cambrian semilattices.
\newblock {\em Comptes Rendus. Math{\'e}matique}, 354(2):113--120, 2016.

\bibitem{muhle2021noncrossing}
Henri M{\"u}hle.
\newblock Noncrossing arc diagrams, tamari lattices, and parabolic quotients of
  the symmetric group.
\newblock {\em Annals of Combinatorics}, 25(2):307--344, 2021.

\bibitem{park2023note}
Jinyoung Park, Michail Sarantis, and Prasad Tetali.
\newblock Note on the number of antichains in generalizations of the boolean
  lattice.
\newblock {\em arXiv preprint arXiv:2305.16520}, 2023.

\bibitem{FTSDL}
Nathan Reading, David~E. Speyer, and Hugh Thomas.
\newblock The fundamental theorem of finite semidistributive lattices.
\newblock {\em Selecta Mathematica}, 27:1--53, 2021.

\bibitem{rognerud_exceptional}
Baptiste Rognerud.
\newblock Exceptional and modern intervals of the {Tamari} lattice.
\newblock {\em S{\'e}min. Lothar. Comb.}, 79:article b79d, 23, 2020.

\bibitem{rognerud_tamari}
Baptiste Rognerud.
\newblock The bounded derived categories of the tamari lattices are
  fractionally calabi-yau.
\newblock {\em Advances in Mathematics}, 389:107885, 2021.

\bibitem{roitzheim2022n}
Constanze Roitzheim, David Barnes, and Scott Balchin.
\newblock N-infinity operads and associahedra.
\newblock {\em Pacific Journal of Mathematics}, 315(2):285--304, 2022.

\bibitem{rubin}
Jonathan Rubin.
\newblock Detecting steiner and linear isometries operads.
\newblock {\em Glasgow Mathematical Journal}, 63(2):307–342, 2021.

\bibitem{simson2010integral}
Daniel Simson.
\newblock Integral bilinear forms, coxeter transformations and coxeter
  polynomials of finite posets.
\newblock {\em Linear algebra and its applications}, 433(4):699--717, 2010.

\bibitem{thomas_trim}
Hugh Thomas.
\newblock An analogue of distributivity for ungraded lattices.
\newblock {\em Order}, 23(2):249--269, 2006.

\bibitem{thomas_tamari}
Hugh Thomas.
\newblock The tamari lattice as it arises in quiver representations.
\newblock In {\em Associahedra, Tamari Lattices and Related Structures: Tamari
  Memorial Festschrift}, pages 281--291. Springer, 2012.

\bibitem{thomas2021introduction}
Hugh Thomas.
\newblock An introduction to the lattice of torsion classes.
\newblock {\em Bulletin of the Iranian Mathematical Society}, 47(Suppl
  1):35--55, 2021.

\bibitem{TW}
Hugh Thomas and Nathan Williams.
\newblock Rowmotion in slow motion.
\newblock {\em Proc. Lond. Math. Soc.}, 119(5):1149--1178, 2019.

\bibitem{treffinger}
Hipolito Treffinger.
\newblock {\em $\tau$-tilting Theory – an Introduction}, page 46–108.
\newblock London Mathematical Society Lecture Note Series. Cambridge University
  Press, 2023.

\bibitem{wood2007maximum}
David~R Wood.
\newblock On the maximum number of cliques in a graph.
\newblock {\em Graphs and Combinatorics}, 23(3):337--352, 2007.

\end{thebibliography}
 \end{document}